\documentclass[10pt]{amsart}
\usepackage[margin=1in]{geometry}
\usepackage{amsmath,amssymb,mathtools,mathrsfs}
\usepackage{xcolor}
\usepackage{tikz}
\usetikzlibrary{arrows.meta,backgrounds,fit,positioning}
\usepackage{placeins}
\usepackage[
  backend=biber,
  style=alphabetic,
  sorting=nyt,
  giveninits=true,
  maxbibnames=99
]{biblatex}
\usepackage{hyperref}

\theoremstyle{remark}
\newtheorem{remark}{Remark}
\theoremstyle{definition}
\newtheorem{definition}{Definition}[section]
\newtheorem{assumption}{Assumption}[section]
\theoremstyle{plain}
\newtheorem{theorem}[definition]{Theorem}
\newtheorem{proposition}[definition]{Proposition}
\newtheorem{lemma}[definition]{Lemma}

\allowdisplaybreaks
\numberwithin{equation}{section}
\renewcommand{\d}{\mathrm d}
\newcommand{\R}{\mathbb R}
\newcommand{\T}{\mathbb T}
\newcommand{\E}{\mathbb E}

\begin{document}

\title[Fluctuating Kinetic Theory]{\Large A Program for Fluctuating Kinetic Theory}
\author[Zhengyan Wu]{\Large Zhengyan Wu}
\address{Department of Mathematics, Technische Universit\"at M\"unchen,
Boltzmannstr. 3, 85748 Garching, Germany}
\email{wuzh@cit.tum.de}

\begin{abstract}
A program for fluctuating kinetic theory is proposed in this paper to study nonlinear L\'evy-type fluctuation
corrections to kinetic PDEs in a unified way.  The
Vlasov--Fokker--Planck (VFP), Landau, and Boltzmann equations can each be
decomposed into a kinetic transport term and a generalized gradient
structure generated by a nonlinear dual pair of dissipation potentials.
Going beyond the linear Onsager structure and the classical
fluctuation--dissipation relation, we propose a new nonlinear L\'evy-type
fluctuation--dissipation relation that identifies the thermal fluctuations
associated with this generalized gradient structure.  On a regular tilted
class, the canonical action of the generalized gradient structure agrees
with the Freidlin--Wentzell-type action predicted by the resulting SPDE.
This consistency gives thermodynamic support for the new nonlinear fluctuation--dissipation relation and leads
to a master fluctuating kinetic equation.  We further establish a sufficient condition
on the nonlinear dual dissipation potential under which its centered
cumulant admits a L\'evy--Khintchine representation, and show that the
resulting L\'evy current can be realized through Gaussian white noise and a
compensated Poisson random measure. As an application of this framework, an \(\alpha\)-stable L\'evy-type Dean-Kawasaki equation is derived. Furthermore, by rescaling the
fluctuating Boltzmann equation under a distinguished relation between the
noise intensity and the hydrodynamic scaling parameter, we prove that its
Hamiltonian converges to the Hamiltonian of the
Landau--Lifshitz--Navier--Stokes equations.  This reveals a scaling relation between kinetic fluctuations and
fluctuating hydrodynamics.

\end{abstract}

\keywords{fluctuating kinetic theory, Vlasov--Fokker--Planck equation,
Landau equation, Boltzmann equation, L\'evy current, dissipation potential}

\maketitle

\setcounter{tocdepth}{1}
\tableofcontents

\section{Introduction}

Kinetic theory connects microscopic particle dynamics with macroscopic
continuum equations.  A central example is the Boltzmann equation.  It is
obtained from the BBGKY hierarchy as an approximation for the one-particle
marginal by closing the hierarchy through propagation of chaos.  For a dilute
gas of hard spheres, it can also be derived directly from the microscopic
dynamics in the Boltzmann--Grad limit
\cite{LanfordClassicalSystems,GallagherSaintRaymondTexier}.  The resulting
mesoscopic description retains the positions and velocities of the particles,
while their individual collision histories are replaced by a statistical
collision operator.  We refer to
\cite{CercignaniBoltzmann,DiPernaLionsBoltzmann,VillaniCollisionalKinetic}
for its physical foundation, range of validity, and mathematical theory.

Another mesoscopic description is given by fluctuating hydrodynamics.  In the
theory of Landau and Lifshitz, thermal noises are added to the constitutive
fluxes of mass, momentum, and energy, and their strengths are determined by
the fluctuation--dissipation relation
\cite{LandauLifshitzFluidMechanics}.  These noises retain particle
fluctuations which are lost in deterministic continuum equations.  The
resulting conservative SPDEs present analytical difficulties, and their
large deviations are related to the fluctuations of the underlying
interacting particle systems \cite{FehrmanGessNonEquilibrium}.

To the best of the author's knowledge, there is no detailed theory relating
the scaling regimes of these two mesoscopic descriptions.  Classical
hydrodynamic-limit results show that suitably rescaled Boltzmann equations
converge to the incompressible Navier--Stokes equations or the
Navier--Stokes--Fourier equations
\cite{GolseSaintRaymondNSLimit,LevermoreMasmoudiNSF}.  A deterministic kinetic
limit removes the fluctuations which give rise to a stochastic hydrodynamic
model.  Motivated from this observation, we consider a finer scale between
the microscopic particle dynamics and the usual kinetic scale, at which the
kinetic description still retains random fluctuations.  We expect that, under
a joint hydrodynamic and fluctuation scaling, the resulting stochastic kinetic
equations converge to models such as the Landau--Lifshitz--Navier--Stokes
equations or the fluctuating Navier--Stokes--Fourier system
\cite{LandauLifshitzFluidMechanics,GessHeydeckerWuLLNS,GessSauerbreyWuNSF}.
Following fluctuating hydrodynamics, the noise in a kinetic equation should
be identified from the fluctuation--dissipation relation and from the
structure of the underlying particle system.  We call this finer mesoscopic
description \emph{fluctuating kinetic theory} (FKT).  The scales and the
relations considered in this program are shown in
Figure~\ref{fig:FKT-program}.

\begin{figure}[htbp]
\centering
\begin{tikzpicture}[
  x=1cm,y=1cm,
  >=Latex,
  model/.style={draw=black!65,rounded corners=2pt,fill=white,
    align=center,text width=3.35cm,minimum height=9mm,
    inner sep=3pt,font=\small},
  particle/.style={model,fill=black!4,text width=3.6cm},
  kinetic/.style={model,fill=black!2},
  flucthydro/.style={model,fill=green!7!white},
  fkt/.style={model,draw=blue!70!black,very thick,
    fill=blue!12!white},
  macro/.style={model,fill=black!5,text width=3.8cm},
  classical/.style={-{Latex[length=2mm]},semithick,draw=black!62},
  proposed/.style={-{Latex[length=2.2mm]},very thick,
    draw=blue!70!black},
  unknown/.style={<->,densely dashed,thick,draw=black!48}
]
  \node[particle] (particles) at (0,0)
    {Interacting particle systems};
  \node[fkt] (fkt) at (-3.35,1.45)
    {Fluctuating kinetic models};
  \node[kinetic] (kinetic) at (-3.35,3.15)
    {Kinetic models\\[-1mm]
     \scriptsize Boltzmann, Landau, VFP};
  \node[flucthydro] (fh) at (3.35,3.15)
    {Fluctuating hydrodynamics};
  \node[macro] (hydro) at (0,6.40)
    {Hydrodynamic models\\[-1mm]
     \scriptsize Navier--Stokes};

  \draw[classical,rounded corners=8pt]
    (particles.west) -- (-5.45,0) -- (-5.45,3.15) -- (kinetic.west);
  \draw[classical,rounded corners=8pt]
    (particles.east) -- (5.45,0) -- (5.45,3.15) -- (fh.east);
  \draw[proposed] (particles.north west) -- (fkt.south east);
  \draw[proposed] (fkt.north) --
    node[right,font=\scriptsize] {$\varepsilon\downarrow0$}
    (kinetic.south);
  \draw[proposed] (fkt.east) --
    node[pos=.58,below=11pt,inner sep=0pt,
      text width=1.9cm,font=\tiny,align=center]
      {stochastic\\hydrodynamic limit}
    (fh.south west);
  \draw[unknown] (kinetic.east) --
    node[above=5pt,fill=white,inner sep=1pt,align=center,
      font=\bfseries\scriptsize,text=black!80]
      {scaling relation:\\[.4mm]?}
    (fh.west);
  \draw[classical] (kinetic.north east) --
    node[pos=.61,above,sloped,font=\scriptsize] {hydrodynamic limit}
    (hydro.south west);
  \draw[classical] (fh.north west) --
    node[pos=.61,above,sloped,font=\scriptsize] {macroscopic limit}
    (hydro.south east);

  % The red frame follows only the blue FKT node and the three proposed
  % scale passages; the endpoint model boxes are deliberately excluded.
  \coordinate (blue-fkt-left) at ([xshift=-2mm]fkt.west);
  \coordinate (blue-particle-end) at
    ([xshift=-.7mm,yshift=.7mm]particles.north west);
  \coordinate (blue-kinetic-end) at
    ([yshift=-.7mm]kinetic.south);
  \coordinate (blue-fh-end) at
    ([xshift=-.7mm,yshift=-.7mm]fh.south west);
  \draw[-{Latex[length=2mm]},semithick,draw=black!45]
    (-6.25,-.2) -- (-6.25,6.70);
  \node[anchor=east,font=\bfseries\scriptsize,text=black!80]
    at (-6.42,0) {microscopic};
  \node[anchor=east,font=\bfseries\scriptsize,text=black!80]
    at (-6.42,2.75) {mesoscopic};
  \node[anchor=east,font=\bfseries\scriptsize,text=black!80]
    at (-6.42,6.40) {macroscopic};

  % Draw the scope frame last, so that its boundary remains in the
  % foreground.  Its label is placed outside and connected by a callout.
  \node[draw=red!72!black,very thick,rounded corners=5pt,
    fit=(fkt)(blue-fkt-left)(blue-particle-end)
        (blue-kinetic-end)(blue-fh-end),
    inner sep=0pt]
    (program) {};
  \node[anchor=west,fill=white,inner sep=1.3pt,
    font=\bfseries\scriptsize,text=black]
    (program-label) at (2.55,1.58)
    {Fluctuating kinetic theory};
  \draw[-{Latex[length=1.8mm]},semithick,draw=red!72!black]
    (program.east) -- (program-label.west);
\end{tikzpicture}
\caption{The scale gap considered in fluctuating kinetic theory.  The red
frame marks the scope of FKT: fluctuating kinetic models, their fluctuations
and large deviations, and their stochastic hydrodynamic limits.}
\label{fig:FKT-program}
\end{figure}
\FloatBarrier

One of the questions represented in Figure~\ref{fig:FKT-program} is how to couple
the kinetic fluctuation scale with the hydrodynamic rescaling such that a
nontrivial fluctuating hydrodynamic limit survives.  In the
Boltzmann-to-LLNS setting studied below, the kinetic
fluctuation parameter \(\varepsilon\), the hydrodynamic rescaling parameter
\(\delta\), and the limiting LLNS noise amplitude \(\mathfrak a\) are related
by
\[
 \frac{\varepsilon}{\delta^2}\longrightarrow\mathfrak a^2.
\]
Determining such scaling relations between stochastic kinetic equations and
fluctuating hydrodynamics is one motivation for the FKT program.

Fluctuation corrections to the Vlasov--Fokker--Planck (VFP), Landau, and
Boltzmann equations have recently been studied through approximation
schemes, solution theories, and fluctuation asymptotics
\cite{MullerVonRenesseZimmerVFP,HaoWuZimmerVFPDK,DuongHeWuLandau,
WuPoissonBoltzmann}.  Regarding to the fluctuations of these models, their
noises were identified by comparison with the corresponding particle
systems.  The fluctuating VFP and Landau equations describe mean-field
particle fluctuations, while the fluctuating Boltzmann equation describes
the randomness of hard-sphere collisions.  We ask how the dissipation of a
kinetic equation determines this noise.  The dissipative parts of the VFP
and Landau equations admit
gradient-flow formulations of linear Onsager type
\cite{DuongPeletierZimmerVFP,CarrilloDelgadinoDesvillettesWuLandau}.  Their
mobilities act linearly on the entropy force, so the classical
fluctuation--dissipation relation applies: the square root of the mobility
is the coefficient of a Gaussian noise, leading to a Dean--Kawasaki-type
noise.  The Boltzmann collision operator is different.  Its dissipative part
has a nonquadratic generalized gradient structure generated by a dual pair
of dissipation potentials \((\Psi_f,\Psi_f^*)\), where \(\Psi_f^*\) is of
cosh type \cite[Example~D, Eq.~(1.45)]{PeletierSchlichtingCosh}.  This structure is recalled in
Section~\ref{sec:Psi-structure-examples}.
This cosh potential is closely related to the dynamical large-deviation
functional for colliding particle systems
\cite{BodineauGallagherSaintRaymondSimonella}.  In
\cite{WuPoissonBoltzmann}, a Poisson collision noise was identified as the
fluctuation correction to the Boltzmann equation.  On the relevant regular
paths, its Freidlin--Wentzell action agrees with the hard-sphere
large-deviation functional.  To recover this Poisson noise directly from the
nonquadratic generalized gradient structure requires to use a nonlinear
extension of the fluctuation--dissipation relation.  We start from this
problem.

We propose the following program of \emph{fluctuating kinetic theory} for
thermodynamic fluctuations associated with generalized gradient structures
generated by dual dissipation potentials \((\Psi_f,\Psi_f^*)\).  The program
is based on a nonlinear fluctuation--dissipation relation and consists of four
steps.

\begin{enumerate}
\item[\emph{Step 1.}]
Formulate a master fluctuating kinetic equation driven by L\'evy noise.  The
noise is characterized by its L\'evy--Khintchine representation, whose
characteristics are determined by the underlying dual dissipation potentials
through the new nonlinear fluctuation--dissipation relation.  It includes
both the Gaussian current of Dean--Kawasaki type and Poisson collision noise.
The master equation therefore contains the fluctuating VFP, Landau, and
Boltzmann equations discussed above.

\item[\emph{Step 2.}]
Construct regularized versions of the models contained in the master
equation and develop their solution theories.

\item[\emph{Step 3.}]
Study the fluctuations and large deviations of these models systematically. 

\item[\emph{Step 4.}]
Investigate stochastic hydrodynamic limits.  A principal example is the
convergence of a rescaled fluctuating Boltzmann equation to the
Landau--Lifshitz--Navier--Stokes equations or to a fluctuating
Navier--Stokes--Fourier system.
\end{enumerate}

This program concerns kinetic theory, interacting particle systems,
stochastic analysis, and nonequilibrium thermodynamics.  In the present
paper, we carry out the first step and give the mathematical setting of the
master equation.  We also describe the remaining steps, state the questions
involved, and discuss their analytical difficulties.

We first introduce the master fluctuating kinetic equation.  Fix
\(d,m\in\mathbb N\), let
\(\mathsf Z=\R_x^d\times\R_v^d\) be the one-particle phase space, and let
\((\mathscr C,\lambda)\) be the measure space of elementary dissipative
channels.  Here \(L^0(\mathscr C,\lambda;\R^m)\) denotes the vector space
of measurable channel fields \(\xi:\mathscr C\to\R^m\), identified up to
\(\lambda\)-almost-everywhere equality.  For a kinetic state \(f\) and a
state test function \(p\), we write
\(\langle f,p\rangle=\int_{\mathsf Z}f(z)p(z)\,\d z\) whenever the integral
is finite.  A channel field
belongs to a test space
\(X\subset L^0(\mathscr C,\lambda;\R^m)\), with pairing
\[
 \langle j,\xi\rangle_{\mathscr C}
 :=\int_{\mathscr C}j(c)\cdot\xi(c)\,\lambda(\d c).
\]
For a state test function \(p\), \(\mathscr Dp(c)\) is the change in \(p\)
produced by the channel \(c\).  Its weak adjoint is defined by
\(\langle\mathscr D^*j,p\rangle
=\langle j,\mathscr Dp\rangle_{\mathscr C}\).  Let \(\mathcal E\) be the
entropy, with first variation \(\delta\mathcal E/\delta f\), and let
\(\Psi_f^*:X\to[0,\infty)\) be the even convex dual dissipation potential
at state \(f\), normalized by \(\Psi_f^*(0)=0\).  Finally, let
\(\mathcal T(f)\) denote the nondissipative kinetic transport.  The derivative
\(D\Psi_f^*(\eta)\) is the dissipative channel current generated by the
force \(\eta\).  With \(\varepsilon>0\) denoting the mass of one effective
particle, or equivalently the size of one jump, the proposed equation is
\begin{align}\label{intro-master-fluctuating-kinetic-equation}
 \d F_t^\varepsilon+\mathcal T(F_t^\varepsilon)\,\d t
 ={}&\mathscr D^*D\Psi_{F_t^\varepsilon}^*
 \left(-\frac12\mathscr D
 \frac{\delta\mathcal E}{\delta f}(F_t^\varepsilon)\right)\d t
 +\mathscr D^*\,\d\mathbb L_t^{
 \varepsilon,F_{t-}^\varepsilon}.
\end{align}
Here \(F_{t-}^\varepsilon\) is the left limit of the state, and
\(\mathbb L^{\varepsilon,f}\) is the centered fluctuation current with the
state frozen at \(f\).  This current is taken to be a centered cylindrical
L\'evy process whose law is characterized by the cumulant functional
\(\mathcal C_f\):
\begin{align}\label{intro-current-cumulant-law}
 \varepsilon\log\E\exp\left\{
 \varepsilon^{-1}\left\langle
 \mathbb L_{t+h}^{\varepsilon,f}-\mathbb L_t^{\varepsilon,f},\xi
 \right\rangle_{\mathscr C}\right\}
 =h\mathcal C_f(\xi),
 \qquad t,h\geq0.
\end{align}
In the centered, fully compensated convention used here, a local
L\'evy cumulant has the L\'evy--Khintchine representation
\begin{align}\label{intro-Levy-Khintchine}
 \mathcal C_f(\xi)
 ={}&\frac12\int_{\mathscr C}
 \xi(c)\cdot A_f(c)\xi(c)\,\lambda(\d c)+\int_{\mathscr C\times(\R^m\setminus\{0\})}
 \left(e^{z\cdot\xi(c)}-1-z\cdot\xi(c)\right)
 \nu_f^{\rm tilt}(\d c,\d z).
\end{align}
Here \(A_f(c)\) is a measurable symmetric nonnegative covariance density,
and \(\nu_f^{\rm tilt}\) is a L\'evy jump measure with the required
integrability and exponential moments.  Thus \(A_f\) describes the Gaussian
part of the current, while \(\nu_f^{\rm tilt}\) describes its jumps.

We determine the L\'evy characteristics from the dissipative structure.  More
precisely, we propose that the dual dissipation potential \(\Psi_f^*\)
determines the full cumulant through
\begin{align}\label{intro-centered-cumulant}
 \mathcal C_f(\xi)
 :={}&\Psi_f^*\left(-\frac12\mathscr D
 \frac{\delta\mathcal E}{\delta f}(f)+\xi\right)
 -\Psi_f^*\left(-\frac12\mathscr D
 \frac{\delta\mathcal E}{\delta f}(f)\right)
 -D\Psi_f^*\left(-\frac12\mathscr D
 \frac{\delta\mathcal E}{\delta f}(f)\right)[\xi].
\end{align}
We call \eqref{intro-centered-cumulant} the nonlinear L\'evy-type
fluctuation--dissipation relation.

For a quadratic \(\Psi_f^*\), relation
\eqref{intro-centered-cumulant} produces a Gaussian current, and for the
Boltzmann cosh potential, it produces a Poisson jump current.  Thus
\eqref{intro-master-fluctuating-kinetic-equation} contains the fluctuating
VFP, Landau, and Boltzmann equations.  Section~2 gives the precise
calculations and also treats a kinetic \(\alpha\)-stable Dean--Kawasaki
equation as a linear approximation of the Boltzmann collision mechanism.
This leads to the following questions.
\begin{enumerate}
\item Which dissipation potentials produce a compatible L\'evy exponent,
and are its Gaussian and jump characteristics uniquely determined?
\item Can the resulting current be represented as a stochastic integral against
Gaussian white noise and a compensated Poisson random measure?
\item Does \eqref{intro-centered-cumulant} express a
physical principle beyond the examples considered here, and how can its consistency
with the underlying thermodynamic dynamics be tested?
\end{enumerate}
Our first three results address these questions.
\begin{itemize}
\item Theorem~\ref{thm:intro-Levy-realizability} gives a sufficient
condition on \(\Psi_f^*\) under which the remainder in
\eqref{intro-centered-cumulant} is a L\'evy--Khintchine exponent and hence
determines admissible Gaussian and jump characteristics.
Proposition~\ref{prop:local-Levy-uniqueness} shows that these
characteristics are unique under the fixed compensation convention used
throughout the paper.
\item Theorem~\ref{thm:intro-stochastic-integral-realization} realizes the
resulting L\'evy current as a stochastic integral driven by Gaussian white
noise and a compensated Poisson random measure.
\item Theorem~\ref{thm:intro-rate-consistency} tests
\eqref{intro-centered-cumulant} through the dynamical action.  The generalized
gradient structure determines a canonical action whose local contribution is
the Legendre--Fenchel cost associated with the entropy force and the dual
dissipation potential.  Minimizing over currents satisfying the kinetic
continuity equation gives the cost of a prescribed density path.  On the
regular tilted class, this action agrees with the Freidlin--Wentzell-type
action obtained from the Gaussian--Poisson stochastic integral in the master
SPDE.  Hence the noise in \eqref{intro-centered-cumulant} has the fluctuation
cost encoded by the generalized gradient structure.  This gives the physical
support for regarding \eqref{intro-centered-cumulant} as the nonlinear
fluctuation--dissipation relation beyond the three kinetic examples above.
\end{itemize}

For the second part of the paper, we devote to deriving the stochastic
hydrodynamic limit in Figure~\ref{fig:FKT-program} for the fluctuating
Boltzmann equation.  We work on
\(\mathsf Z_{\rm B}=\T_x^3\times\R_v^3\), and let
\[
 \mathscr C_{\rm B}
 :=\T_x^3\times\R_v^3\times\R_{v_*}^3\times\mathbb S^2,
 \qquad \d c:=\d x\,\d v\,\d v_*\,\d\sigma.
\]
For \(c=(x,v,v_*,\sigma)\), set
\[
 v'=\frac{v+v_*}{2}+\frac{|v-v_*|}{2}\sigma,
 \qquad
 v_*'=\frac{v+v_*}{2}-\frac{|v-v_*|}{2}\sigma,
\]
and define
\[
 \mathscr D_{\rm B}q(c)
 :=q(x,v')+q(x,v_*')-q(x,v)-q(x,v_*),
 \qquad
 \mathcal J_c:=\delta_{(x,v')}+\delta_{(x,v_*')}
 -\delta_{(x,v)}-\delta_{(x,v_*)}.
\]
Let \(B(c)=B(v-v_*,\sigma)\) be a microreversible collision kernel, with the precise conditions stated in \eqref{collision-measure-microreversibility} and Assumption~\(\ref{ass:Boltzmann-kernel-conditions}\).  Let \(\delta\in(0,1)\) be the hydrodynamic rescaling parameter.  The incompressibly rescaled fluctuating Boltzmann equation is formally
\begin{align}\label{intro-rescaled-fluctuating-Boltzmann-SPDE}
 \d F_t^{\delta,\varepsilon}
 +\frac1\delta v\cdot\nabla_xF_t^{\delta,\varepsilon}\,\d t
 ={}&\frac1{\delta^2}Q_{\rm B}
 (F_t^{\delta,\varepsilon},F_t^{\delta,\varepsilon})\,\d t\notag\\
 &+\varepsilon\int_{\mathscr C_{\rm B}\times[0,\infty)}
 \mathcal J_c
 \boldsymbol1_{\{u\leq(2\varepsilon\delta^2)^{-1}
 BF_{t-}^{\delta,\varepsilon}F_{t-,*}^{\delta,\varepsilon}\}}
 \widetilde{\mathcal N}(\d t,\d c,\d u),
\end{align}
where \(\widetilde{\mathcal N}\) is the compensated Poisson random measure
associated with intensity \(\d t\,\d c\,\d u\), and
\(F_{t-,*}^{\delta,\varepsilon}(x,v,v_*)
:=F_{t-}^{\delta,\varepsilon}(x,v_*)\).  Under the diffusive scaling,
transport and collisions occur at rates \(\delta^{-1}\) and
\(\delta^{-2}\), respectively, while \(\varepsilon\) is the normalized mass
transferred by one effective hard-sphere collision.

The target equations are the incompressible Landau--Lifshitz--Navier--Stokes
equations
\begin{align}\label{intro-LLNS-SPDE}
 \d u+\Pi\nabla_x\cdot(u\otimes u)\,\d t
 =\mu\Delta u\,\d t
 -\mathfrak a\sqrt{2\mu}\,\Pi\nabla_x\cdot\d W^{\rm F},
 \qquad \nabla_x\cdot u=0.
\end{align}
Here \(\Pi\) is the Helmholtz projection, \(\mu>0\) is the viscosity, and
\(W^{\rm F}\) is symmetric trace-free space--time white noise with
\[
 \E[\dot W^{\rm F}_{ij}(t,x)\dot W^{\rm F}_{kl}(s,y)]
 =\mathsf C_{ij,kl}\delta(t-s)\delta(x-y),
 \qquad
 \mathsf C_{ij,kl}:=\delta_{ik}\delta_{jl}+\delta_{il}\delta_{jk}
 -\frac23\delta_{ij}\delta_{kl}.
\]
A process-level convergence theorem for these continuum SPDEs is currently
out of reach.  The obstructions are different on the kinetic and fluid sides,
but they already becomes apparent at the level of the continuum equations.
On the kinetic side, the Poisson collision noise in
\eqref{intro-rescaled-fluctuating-Boltzmann-SPDE} may destroy the
nonnegativity of the kinetic density, which is a new difficulty identified
in \cite{WuPoissonBoltzmann}.  That work therefore constructs and analyzes a
finite-dimensional coarse-grained jump process.  On the fluid side, the
random stress in three-dimensional LLNS is the divergence of space--time
white noise, making the equations supercritical under their natural
parabolic scaling.  Consequently, regularity structures and paracontrolled
distributions are not applicable in this setting
\cite{HairerRegularityStructures,GubinelliImkellerPerkowski}.  We therefore
study the stochastic scaling limit by comparing the exponential-generator
Hamiltonians of the two models.

For a Markov process \((X_t)_{t\geq0}\) with generator \(\mathcal L\),
started from a state \(x\), and for a suitable test observable \(\Phi\),
its exponential-generator Hamiltonian is
\begin{align}\label{intro-exponential-generator-Hamiltonian}
 \mathcal H(x,\Phi)
 :=e^{-\Phi(x)}\mathcal L e^\Phi(x)
 =\lim_{h\downarrow0}\frac1h
 \log\E_x\exp\{\Phi(X_h)-\Phi(x)\},
\end{align}
whenever the expressions are defined.  It is the infinitesimal logarithmic
moment-generating function of the tested increment and is used in the
Hamilton--Jacobi approach to large deviations
\cite[Chapters~1 and~5]{FengKurtzLargeDeviations}.  Its use in hydrodynamic
fluctuation theory is developed, for example, in
\cite{FengMikamiZimmer2021}.  

We next specify the two Hamiltonians.  For
\(\varrho>0\), \(U\in\R^3\), and \(\tau>0\), let
\[
 \mathcal M_{\varrho,U,\tau}(v)
 :=\varrho(2\pi\tau)^{-3/2}
 \exp\left(-\frac{|v-U|^2}{2\tau}\right),
\]
Let \(u\in C^\infty(\T^3;\R^3)\) be a divergence-free macroscopic velocity
field, and let \(\theta\in C^\infty(\T^3;\R)\) be a scalar thermodynamic
fluctuation.  For \(\delta\in(0,1)\) sufficiently small, set
\begin{align}\label{intro-local-Maxwellian}
 F_\delta[u,\theta](x,v)
 :=\mathcal M_{1-\delta\theta(x),\,\delta u(x),\,
                    1+\delta\theta(x)}(v).
\end{align}
We abbreviate \(F_\delta:=F_\delta[u,\theta]\) and
\(F_{\delta,*}(x,v,v_*):=F_\delta(x,v_*)\).  The pairing
\(\langle\cdot,\cdot\rangle\) below denotes integration over
\(\mathsf Z_{\rm B}\).
Write \(M:=\mathcal M_{1,0,1}\), let \(L\) be the nonnegative linearized
Boltzmann operator in \(L^2(M\,\d v)\), and define
\[
 \mathsf A_{ij}(v):=v_iv_j-\frac{|v|^2}{3}\delta_{ij},
 \qquad
 L\widehat{\mathsf A}_{ij}=\mathsf A_{ij},
 \qquad \widehat{\mathsf A}_{ij}\perp\ker L.
\]
Recall the trace-free isotropy tensor
\[
 \mathsf C_{ij,kl}
 :=\delta_{ik}\delta_{jl}+\delta_{il}\delta_{jk}
   -\frac23\delta_{ij}\delta_{kl},  \qquad 1\leq i,j,k,l\leq3. 
\]
Rotational invariance determines a unique coefficient \(\mu>0\) through
\[
 \int_{\R^3}\widehat{\mathsf A}_{ij}(v)\mathsf A_{kl}(v)M(v)\,\d v
 =\mu\mathsf C_{ij,kl},
 \qquad 1\leq i,j,k,l\leq3.
\]
Contracting the first
and third indices and the second and fourth indices gives
\[
 \sum_{i,j=1}^3\mathsf C_{ij,ij}
 =\sum_{i,j=1}^3\left(
   \delta_{ii}\delta_{jj}+\delta_{ij}\delta_{ji}
   -\frac23\delta_{ij}\delta_{ij}\right)
 =3\cdot3+3-\frac23\cdot3=10.
\]
Consequently,
\[
 \mu=\frac1{10}\int_{\R^3}
 \sum_{i,j=1}^3
 \widehat{\mathsf A}_{ij}(v)\mathsf A_{ij}(v)M(v)\,\d v, 
\]
and this coefficient is the shear viscosity in \eqref{intro-LLNS-SPDE}.
Given a smooth divergence-free momentum test field
\(\varphi\in C^\infty(\T^3;\R^3)\), put
\begin{align}\label{intro-perturbed-momentum-test}
 \nabla_{\rm sym}w
 &:=\frac12\bigl(\nabla w+(\nabla w)^{\mathsf T}\bigr),\notag\\
 q_{\delta,\varphi}(x,v)
 &:=\delta^{-1}v\cdot\varphi(x)
 +\sum_{i,j=1}^3
 (\nabla_{\rm sym}\varphi)_{ij}(x)\widehat{\mathsf A}_{ij}(v).
\end{align}
The corrected fluctuating Boltzmann Hamiltonian is
\begin{align}\label{intro-corrected-Boltzmann-Hamiltonian}
 \mathbb H_{\delta,\varepsilon}^{\rm B}(u,\theta;\varphi)
 :=-\frac1\delta
 \left\langle v\cdot\nabla_xF_\delta[u,\theta],
 q_{\delta,\varphi}\right\rangle+\frac1{2\varepsilon\delta^2}
 \int_{\mathscr C_{\rm B}}B(c)F_\delta F_{\delta,*}
 \left(e^{\varepsilon\mathscr D_{\rm B}q_{\delta,\varphi}(c)}-1\right)
 \,\d c,
\end{align}
while the LLNS Hamiltonian is
\begin{align}\label{intro-LLNS-Hamiltonian}
 \mathbb H_{\mathfrak a}^{\rm LLNS}(u;\varphi)
 :={}&\int_{\T^3}\left[
 \sum_{i,j=1}^3u_i u_j\partial_j\varphi_i
 -2\mu\sum_{i,j=1}^3(\nabla_{\rm sym}u)_{ij}
 (\nabla_{\rm sym}\varphi)_{ij}\right]\d x\notag\\
 &+\mathfrak a^2\mu\int_{\T^3}
 \sum_{i,j,k,l=1}^3
 (\nabla_{\rm sym}\varphi)_{ij}\mathsf C_{ij,kl}
 (\nabla_{\rm sym}\varphi)_{kl}\,\d x.
\end{align}
We will prove that, under the scaling relation
\begin{align}\label{intro-distinguished-stochastic-hydrodynamic-scaling}
 \delta\downarrow0,\qquad \varepsilon\downarrow0,
 \qquad \frac{\varepsilon}{\delta^2}\longrightarrow\mathfrak a^2,
\end{align}
the Hamiltonian of the rescaled fluctuating Boltzmann equation in
\eqref{intro-corrected-Boltzmann-Hamiltonian} converges to the LLNS
Hamiltonian in \eqref{intro-LLNS-Hamiltonian}.  Hence the kinetic fluctuation
scale and the hydrodynamic scale are related by
\eqref{intro-distinguished-stochastic-hydrodynamic-scaling} at the
Hamiltonian level.

\subsection{Main results}

We state concise versions of the four main results.  Their complete
hypotheses are given in the corresponding sections.  The first result gives
a sufficient condition under which the dissipation potential makes
\eqref{intro-centered-cumulant} a L\'evy cumulant.

\begin{theorem}
\label{thm:intro-Levy-realizability}
Fix \(f\in L^1(\mathsf Z)\) with \(f>0\) almost everywhere and satisfying
the regularity requirements of
Assumption~\ref{ass:Hessian-Levy-sufficient-condition}.  Suppose that
\(X\) satisfies the localization conditions
\eqref{localized-channel-test-assumption}--
\eqref{localized-channel-fields-in-X}.  Suppose also that
\(\Psi_f^*\) has the following form, 
\[
 \Psi_f^*(\xi)=\int_{\mathscr C}
 \psi_f^*(c,\xi(c))\,\lambda(\d c),
\]
and that its channelwise Hessian
\(H_f(c,r):=D_r^2\psi_f^*(c,r)\) is matrix-exponentially convex: for
\(\lambda\)-almost every \(c\),
\[
 \sum_{i,j=1}^N u_i\cdot H_f(c,r_i+r_j)u_j\geq0
\]
for every \(N\in\mathbb N\) and every
\(r_i,u_i\in\R^m\).  Assume also the Hessian finiteness and continuity
condition
\[
 D^2\Psi_f^*(\rho)[\xi,\zeta]
 =\int_{\mathscr C}\xi(c)\cdot
 H_f(c,\rho(c))\zeta(c)\,\lambda(\d c),
 \qquad \rho,\xi,\zeta\in X,
\]
where the right-hand side is a finite continuous bilinear form.  Then there
exist a measurable matrix field \(A_f\), unique \(\lambda\)-almost
everywhere and satisfying
\(A_f(c)^{\mathsf T}=A_f(c)\) and
\(u\cdot A_f(c)u\geq0\) for every \(c\in\mathscr C\) and
\(u\in\R^m\), and a unique L\'evy
measure \(\nu_f^{\rm tilt}\) such that \(\mathcal C_f\) has
the L\'evy--Khintchine representation
\eqref{intro-Levy-Khintchine}, namely,
\begin{align*}
 &\Psi_f^*\left(-\frac12\mathscr D
 \frac{\delta\mathcal E}{\delta f}(f)+\xi\right)
 -\Psi_f^*\left(-\frac12\mathscr D
 \frac{\delta\mathcal E}{\delta f}(f)\right)
 -D\Psi_f^*\left(-\frac12\mathscr D
 \frac{\delta\mathcal E}{\delta f}(f)\right)[\xi]\\
 &={}
 \frac12\int_{\mathscr C}
 \xi(c)\cdot A_f(c)\xi(c)\,\lambda(\d c)
 +\int_{\mathscr C\times(\R^m\setminus\{0\})}
 \left(e^{z\cdot\xi(c)}-1-z\cdot\xi(c)\right)
 \nu_f^{\rm tilt}(\d c,\d z),
 \qquad \xi\in X.
\end{align*}
\end{theorem}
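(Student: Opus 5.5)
Set \(\eta:=-\frac12\mathscr D\frac{\delta\mathcal E}{\delta f}(f)\), which lies in \(X\) by the localization conditions. The plan is to reduce everything to a channelwise statement and apply a matrix-valued Bochner/Bernstein-type theorem. Because \(\Psi_f^*\) is an integral functional, Taylor's formula with integral remainder and the Hessian finiteness identity give, for \(\xi\in X\),
\[
 \mathcal C_f(\xi)=\int_0^1(1-s)\,D^2\Psi_f^*(\eta+s\xi)[\xi,\xi]\,\d s
 =\int_{\mathscr C}\int_0^1(1-s)\,\xi(c)\cdot H_f\bigl(c,\eta(c)+s\xi(c)\bigr)\xi(c)\,\d s\,\lambda(\d c).
\]
Continuity of the bilinear form along the segment justifies the first step, and Fubini justifies the second. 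The task is therefore to represent, for \(\lambda\)-a.e. \(c\), the tilted Hessian \(r\mapsto G_c(r):=H_f(c,\eta(c)+r)\) in a Lévy form.

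The tilt preserves matrix-exponential convexity. Indeed, \(\sum u_i\cdot G_c(r_i+r_j)u_j=\sum u_i\cdot H_f(c,(r_i+\eta/2)+(r_j+\eta/2))u_j\ge0\). I would then invoke the matrix-valued Bernstein–Widder/Devinatz theorem for exponentially convex functions on \(\R^m\), whose continuity comes from the Hessian continuity assumption, possibly after passing to a.e. \(c\) via localized test fields. It yields a unique finite positive-semidefinite matrix measure \(\Sigma_c\) on \(\R^m\) with
\[
 G_c(r)=\int_{\R^m}e^{z\cdot r}\,\Sigma_c(\d z).
\]
Next split \(\Sigma_c=\Sigma_c(\{0\})+\Sigma_c|_{\R^m\setminus\{0\}}\). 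Set \(A_f(c):=\Sigma_c(\{0\})\), which is symmetric and nonnegative. On \(z\neq0\), write \(\Sigma_c(\d z)=\Theta_c(z)\,\tau_c(\d z)\), where \(\tau_c\) is the trace measure and \(\Theta_c\ge0\) is a density. Define the scalar measure \(\nu_c(\d z):=z\cdot\Theta_c(z)z\,|z|^{-2}\ldots\) more precisely through \(\nu_c(\d z):=|z|^{-2}\,\hat z\cdot\Theta_c(z)\hat z\,\tau_c(\d z)\).

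This last choice is where care is needed. The quadratic form \(\xi\cdot G_c(s\xi)\xi\) only sees \(\xi\cdot\Sigma\xi\). Integrating \(\int_0^1(1-s)e^{sz\cdot\xi}\,\d s=(e^{z\cdot\xi}-1-z\cdot\xi)/(z\cdot\xi)^2\) shows that the identity requires \(\xi\cdot\Sigma_c(\d z)\xi=(z\cdot\xi)^2\nu_c(\d z)\) for all \(\xi\). Hence the off-zero part of \(\Sigma_c\) must be rank one along \(z\), namely \(\Sigma_c(\d z)=zz^{\mathsf T}\nu_c(\d z)\). This is the main obstacle. I would derive it from the fact that \(H_f(c,\cdot)=D_r^2\psi_f^*(c,\cdot)\) is a Hessian, so \(\partial_{r_k}H_{ij}\) is fully symmetric. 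Differentiating the Laplace representation gives \(z_k\Sigma_{ij}=z_i\Sigma_{kj}\) as measures, which forces \(\Sigma_c|_{z\neq0}=zz^{\mathsf T}\nu_c\) with \(\nu_c:=|z|^{-4}z\cdot\Sigma_c z\). The measure \(\nu_c\) is a Lévy measure because \(\int|z|^2\,\d\nu_c=\operatorname{tr}\Sigma_c(\R^m\setminus\{0\})<\infty\). Exponential moments come from finiteness of \(G_c(r)\) for all \(r\).

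Plugging into the Taylor integral and using Fubini/Tonelli gives the claimed formula with \(\nu_f^{\rm tilt}(\d c,\d z):=\lambda(\d c)\,\nu_c(\d z)\). Measurability of \(c\mapsto(A_f(c),\nu_c)\) follows from the uniqueness in the Laplace representation, recovering the measures as limits of measurable functionals of \(H_f\). Uniqueness of \(A_f\) and \(\nu_f^{\rm tilt}\) follows by testing with localized fields \(\xi=\boldsymbol1_E\,r\), as allowed by the localization conditions. This recovers \(\int_E\) of the channelwise cumulant, hence the channelwise Laplace transform of \(\Sigma_c\) for a.e. \(c\). Laplace-transform injectivity then pins down \(\Sigma_c\), and with it \(A_f\) and \(\nu_c\).
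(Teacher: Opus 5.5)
Your proposal is correct and is essentially the paper's argument: a Bernstein--Widder/Shucker representation of the channelwise Hessian as a matrix-valued Laplace transform, the rank-one structure \(zz^{\mathsf T}\nu_c\) away from the origin forced by the symmetry of the third derivatives of \(\psi_f^*\), the integral Taylor remainder at the entropy force, and uniqueness by testing with localized fields \(\boldsymbol1_K\theta\). The only difference is minor: you represent the tilted Hessian \(H_f(c,\eta(c)+\cdot)\) directly, whereas the paper represents \(H_f(c,\cdot)\) at the origin to get a symmetric \(\nu_f\) and sets \(\nu_f^{\rm tilt}=e^{z\cdot\eta_f}\nu_f\), which is the same measure by Laplace uniqueness; note also that \(\eta\in X\) is assumed in Assumption~\ref{ass:Hessian-Levy-sufficient-condition} rather than deduced from the localization conditions.
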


The second result realizes a current with a given L\'evy--Khintchine
representation as a Gaussian--Poisson stochastic integral.

\begin{theorem}
\label{thm:intro-stochastic-integral-realization}
Fix \(f\in L^1(\mathsf Z)\) with \(f>0\) almost everywhere, and suppose
that \(\mathcal C_f\) admits the representation
\eqref{intro-Levy-Khintchine} with admissible characteristics
\(A_f\) and \(\nu_f^{\rm tilt}\).  Let
\(\sigma_f=A_f^{1/2}\).  If \(W\) is Gaussian white noise with control
measure \(\d s\,\lambda(\d c)\), and if
\(\widetilde{\mathcal N}_f^\varepsilon\) is an independent compensated
Poisson random measure with compensator
\(\varepsilon^{-1}\d s\,\nu_f^{\rm tilt}(\d c,\d z)\), then
\begin{align*}
 \left\langle\mathbb L_t^{\varepsilon,f},\xi\right\rangle_{\mathscr C}
 :=\sqrt\varepsilon\int_0^t\int_{\mathscr C}
 \sigma_f(c)^{\mathsf T}\xi(c)\cdot W(\d s,\d c)+\varepsilon\int_0^t
 \int_{\mathscr C\times(\R^m\setminus\{0\})}
 z\cdot\xi(c)\,
 \widetilde{\mathcal N}_f^\varepsilon(\d s,\d c,\d z)
\end{align*}
defines a centered cylindrical L\'evy current satisfying
\eqref{intro-current-cumulant-law} for every \(t,h\geq0\) and every
\(\xi\in X\).  Hence it realizes the current prescribed by
\eqref{intro-centered-cumulant}, uniquely in cylindrical law.
\end{theorem}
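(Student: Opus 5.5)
The plan is to compute the cumulant of a tested increment directly from the independent-increment structure of the two driving noises, and then identify it with \(h\,\mathcal C_f(\xi)\) through \eqref{intro-Levy-Khintchine}. Fix \(\xi\in X\) and \(t,h\ge0\). Testing the proposed current against \(\xi\) gives
\[
 \bigl\langle\mathbb L_{t+h}^{\varepsilon,f}-\mathbb L_t^{\varepsilon,f},\xi\bigr\rangle_{\mathscr C}
 =\sqrt\varepsilon\,G_{t,h}(\xi)+\varepsilon P_{t,h}(\xi).
\]
Here \(G_{t,h}(\xi)\) is the Wiener integral of \(\sigma_f^{\mathsf T}\xi\) over \((t,t+h]\times\mathscr C\), and \(P_{t,h}(\xi)\) is the compensated Poisson integral of \(z\cdot\xi(c)\) over \((t,t+h]\times\mathscr C\times(\R^m\setminus\{0\})\).

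\textbf{Well-posedness of the integrals.} The admissibility of \(A_f\) and \(\nu_f^{\rm tilt}\) must make both integrals meaningful. Since \(\sigma_f\sigma_f^{\mathsf T}=A_f\), we have \(\int|\sigma_f^{\mathsf T}\xi|^2\,\d\lambda=\int\xi\cdot A_f\xi\,\d\lambda\), and this quantity is finite because the quadratic part of \(\mathcal C_f(\xi)\) is finite. For the Poisson integral, the required integrability is of \(|z\cdot\xi|^2\wedge|z\cdot\xi|\) together with the exponential moment \(\int(e^{z\cdot\xi}-1-z\cdot\xi)\,\d\nu<\infty\). Both follow from finiteness of \(\mathcal C_f(s\xi)\) for \(s\in\{1,\varepsilon^{-1}\cdot\varepsilon\}\) and the convexity bounds \(e^{y}-1-y\ge c(y^2\wedge|y|)\). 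If admissibility is stated for the scaled field \(\xi/\varepsilon\cdot\varepsilon=\xi\), nothing more is needed, since the exponent below only involves \(\xi\).

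\textbf{Exponential moments.} By independence of \(W\) and \(\widetilde{\mathcal N}_f^\varepsilon\), the log-Laplace transform splits into two terms.

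For the Gaussian term, \(G_{t,h}(\xi)\) is centered Gaussian with variance \(h\int\xi\cdot A_f\xi\,\d\lambda\). Hence
\[
 \log\E e^{\varepsilon^{-1}\sqrt\varepsilon G}=\frac{h}{2\varepsilon}\int\xi\cdot A_f\xi\,\d\lambda .
\]
Multiplying by \(\varepsilon\) gives the quadratic part of \(h\,\mathcal C_f(\xi)\).

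For the Poisson term, the exponential formula for compensated Poisson random measures with compensator \(\varepsilon^{-1}\d s\,\nu\) applies to the integrand \(g=z\cdot\xi(c)\), since \(\varepsilon^{-1}\cdot\varepsilon g=g\). It gives
\[
 \log\E e^{P}=h\varepsilon^{-1}\int\bigl(e^{g}-1-g\bigr)\,\d\nu_f^{\rm tilt}.
\]
Multiplying by \(\varepsilon\) yields the jump part.

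Summing the two parts gives \eqref{intro-current-cumulant-law}. In the application of the exponential formula, I would first truncate to \(\{|z|\ge\eta\}\), where the measure is finite, and pass to the limit \(\eta\downarrow0\) by monotone and dominated convergence. Monotone convergence handles the part where \(g\ge0\), and the bound \(e^g-1-g\le g^2\) handles the part where \(g<0\). I expect this truncation and limit passage to be the main technical step.

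\textbf{Lévy property.} Independent and stationary increments in \(t\) hold because \(W\) and \(\widetilde{\mathcal N}\) have independent stationary increments over disjoint time intervals. Centering follows from the zero means of Wiener and compensated integrals. Linearity in \(\xi\) makes \(\xi\mapsto\langle\mathbb L_t,\xi\rangle\) a cylindrical process.

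\textbf{Uniqueness in cylindrical law.} The finite-dimensional laws of \((\langle\mathbb L_{t_k},\xi_k\rangle)_k\) are determined by the joint Laplace transforms. Since \(X\) is a vector space, these reduce, via independent increments, to the one-dimensional transforms \(\E\exp\{\varepsilon^{-1}\langle\Delta\mathbb L,\xi\rangle\}\) with \(\xi\) ranging over \(X\).

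Finiteness of the Laplace transform in a neighbourhood of \(0\) follows along rays \(s\xi\), \(s\in(-1,1)\). The cumulant is finite there by convexity of \(\mathcal C_f\) on the segment between \(\pm\xi\), using that \(\mathcal C_f\) is finite on all of \(X\). Analyticity then shows that the Laplace transform determines the law. Thus any current satisfying \eqref{intro-current-cumulant-law} has the same cylindrical law. Combined with Proposition~\ref{prop:local-Levy-uniqueness}, this identifies the realized current with the one prescribed by \eqref{intro-centered-cumulant}.
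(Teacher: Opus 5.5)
Your proposal is correct and follows essentially the paper's argument. You split the tested increment by independence, take the Gaussian part from the isometry and the jump part from the compensated Poisson exponential formula (approximating from finite-intensity sets), add and multiply by \(\varepsilon\), and read off the L\'evy property from independent stationary increments. Your Laplace-transform argument for uniqueness in cylindrical law goes beyond the paper's proof, which does not spell that part out. It uses finite exponential moments along rays together with linearity and independent increments, and Proposition~\ref{prop:local-Levy-uniqueness} is not actually needed for it.

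Two small repairs are needed in the limit passage. First, truncate to \(\{|z\cdot\xi(c)|\ge\eta\}\) rather than \(\{|z|\ge\eta\}\), since only local finiteness in \(c\) is assumed. Second, monotone convergence handles only the deterministic right-hand side, whereas \(\E e^{P_\eta}\to\E e^{P}\) for the truncated integrals requires uniform integrability. This follows from \(\E e^{2P_\eta}\le\exp\{h\varepsilon^{-1}\int(e^{2z\cdot\xi}-1-2z\cdot\xi)\,\d\nu_f^{\rm tilt}\}<\infty\), which is the admissibility condition applied to \(2\xi\in X\).
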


This gives the following formal Gaussian--Poisson stochastic-integral
representation of the master fluctuating kinetic equation:
\begin{align}\label{intro-explicit-master-SPDE}
 \d F_t^\varepsilon+\mathcal T(F_t^\varepsilon)\,\d t
 ={}&\mathscr D^*D\Psi_{F_t^\varepsilon}^*
 \left(-\frac12\mathscr D
 \frac{\delta\mathcal E}{\delta f}(F_t^\varepsilon)\right)\d t
 \notag\\
 &+\sqrt\varepsilon\,\mathscr D^*\left(
 \sigma_{F_{t-}^\varepsilon}(c)W(\d t,\d c)\right)+\varepsilon\mathscr D^*
 \int_{\R^m\setminus\{0\}}z\,
 \widetilde{\mathcal N}^\varepsilon(\d t,\d c,\d z),
\end{align}
where the predictable compensator of
\(\mathcal N^\varepsilon\), conditional on the left-limit state, is
\(\varepsilon^{-1}\d t\,
\nu_{F_{t-}^\varepsilon}^{\rm tilt}(\d c,\d z)\), and
\begin{align*}
 \widetilde{\mathcal N}^\varepsilon(\d t,\d c,\d z)
 :=\mathcal N^\varepsilon(\d t,\d c,\d z)
 -\varepsilon^{-1}\d t\,
 \nu_{F_{t-}^\varepsilon}^{\rm tilt}(\d c,\d z).
\end{align*}
Theorem~\ref{thm:intro-stochastic-integral-realization} proves the
representation at every frozen state.  Evaluating the same characteristics
along the evolving state gives the formal L\'evy-type SPDE
\eqref{intro-explicit-master-SPDE}.  Its solution theory depends on the
resulting state-dependent coefficients and belongs to Step~2 of the program.

The third result compares two path-space costs.  Fix \(T>0\) and a regular
path
\[
 f\in C^1([0,T];L^1(\mathsf Z)),
 \qquad \mathcal T(f)\in C([0,T];L^1(\mathsf Z)),
 \qquad f_0:=f(0),
\]
for which the expressions below are finite.  A Gaussian--Poisson control is
a pair \((h,q)\), where
\begin{align*}
 h&\in L^2([0,T]\times\mathscr C,
            \d t\,\lambda;\R^m),\\
 q&:[0,T]\times\mathscr C\times(\R^m\setminus\{0\})
     \longrightarrow[0,\infty)
\end{align*}
is measurable.  Its instantaneous cost is
\begin{align}\label{intro-control-cost}
 \mathfrak K_f(h,q)
 :={}&\frac12\int_{\mathscr C}|h(c)|^2\,\lambda(\d c)\notag\\
 &+\int_{\mathscr C\times(\R^m\setminus\{0\})}
 \ell(q(c,z))\,\nu_f^{\rm tilt}(\d c,\d z),
 \qquad \ell(r):=r\log r-r+1,\qquad 0\log0:=0.
\end{align}
The controlled current acts on \(\xi\in X\) by
\begin{align}\label{intro-controlled-current}
 J_f^{h,q}[\xi]
 :={}&D\Psi_f^*\left(-\frac12\mathscr D
 \frac{\delta\mathcal E}{\delta f}\right)[\xi]
 +\int_{\mathscr C}\xi(c)\cdot\sigma_f(c)h(c)\,
 \lambda(\d c)\notag\\
 &+\int_{\mathscr C\times(\R^m\setminus\{0\})}
 z\cdot\xi(c)\bigl(q(c,z)-1\bigr)
 \nu_f^{\rm tilt}(\d c,\d z).
\end{align}
We call \((h,q)\) admissible for \(f\) if its time-integrated cost is
finite and the controlled skeleton equation
\begin{align}\label{intro-controlled-skeleton}
 \left\langle\partial_t f_t+\mathcal T(f_t),p\right\rangle
 =J_{f_t}^{h_t,q_t}[\mathscr Dp]
\end{align}
holds for almost every \(t\in[0,T]\) and every
\(p\in C_c^\infty(\mathsf Z)\).  We denote the set of these controls by
\(\mathcal U(f)\).  The weak-convergence representation of the stochastic
integral then gives the controlled action
\begin{align}\label{intro-SPDE-action}
 I_{[0,T]}^{\rm SPDE}(f)
 :=\inf_{(h,q)\in\mathcal U(f)}
 \int_0^T\mathfrak K_{f_t}(h_t,q_t)\,\d t,
\end{align}
with value \(+\infty\) if \(\mathcal U(f)=\varnothing\).  Assuming that the
transport preserves \(\mathcal E\) and that the chain rule holds, the
canonical action reads 
\begin{align}\label{intro-canonical-density-action}
 I_{[0,T]}^{\rm can}(f)
 :={}&\frac12\bigl[\mathcal E(f_T)-\mathcal E(f_0)\bigr]+\int_0^T
 \inf_{\substack{J\in X^*:\;\mathscr D^*J
 =\partial_t f_t+\mathcal T(f_t)}}
 \Psi_{f_t}(J)\,\d t+\int_0^T\Psi_{f_t}^*\left(-\frac12\mathscr D
 \frac{\delta\mathcal E}{\delta f}(f_t)\right)\,\d t.
\end{align}
Here \(\Psi_f\) is the Legendre dual of \(\Psi_f^*\) defined in
\eqref{primal-dissipation-potential}, and \eqref{intro-canonical-density-action} is the explicit density-level
\(\Psi+\Psi^*\)-type energy--dissipation action.

\begin{theorem}
\label{thm:intro-rate-consistency}
The classes \(\mathcal A_{\rm reg}\) and \(\mathcal A_{\rm tilt}\) are
introduced in Definition~\ref{def:regular-canonical-path}.  For every path
\(f\in\mathcal A_{\rm reg}\), we have 
\begin{align*}
 I_{[0,T]}^{\rm can}(f)\leq I_{[0,T]}^{\rm SPDE}(f).
\end{align*}
If \(f\in\mathcal A_{\rm tilt}\) and \(p\) is a corresponding tilt field
from the same definition, define
\begin{align*}
 h_t^p(c)&:=\sigma_{f_t}(c)^{\mathsf T}\mathscr Dp_t(c),\\
 q_t^p(c,z)&:=\exp\{z\cdot\mathscr Dp_t(c)\}.
\end{align*}
Then \((h^p,q^p)\in\mathcal U(f)\), and
\begin{align}\label{intro-rate-consistency}
 I_{[0,T]}^{\rm can}(f)
 =I_{[0,T]}^{\rm SPDE}(f)
 =\int_0^T\mathfrak K_{f_t}(h_t^p,q_t^p)\,\d t.
\end{align}
Moreover, \((h^p,q^p)\) minimizes the SPDE control cost.
\end{theorem}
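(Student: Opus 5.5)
The plan is a pointwise-in-time duality argument. Fix \(t\) and write \(\eta_t:=-\frac12\mathscr D\frac{\delta\mathcal E}{\delta f}(f_t)\). By Theorem~\ref{thm:intro-Levy-realizability}, for \(\xi\in X\),
\[
\Psi_{f_t}^*(\eta_t+\xi)=\Psi_{f_t}^*(\eta_t)+D\Psi_{f_t}^*(\eta_t)[\xi]+\mathcal C_{f_t}(\xi),
\]
with \(\mathcal C_{f_t}\) given by \eqref{intro-Levy-Khintchine}.

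\emph{Lower bound.} Take \((h,q)\in\mathcal U(f)\) and let \(J_t:=J_{f_t}^{h_t,q_t}\). The skeleton equation \eqref{intro-controlled-skeleton} says that \(J_t\) is admissible in the infimum of \eqref{intro-canonical-density-action}, so it suffices to bound \(\Psi_{f_t}(J_t)\).

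1. Use the Fenchel definition \(\Psi_{f}(J)=\sup_{\zeta\in X}\{J[\zeta]-\Psi_f^*(\zeta)\}\) and substitute \(\zeta=\eta_t+\xi\). This gives
\[
\Psi_{f_t}(J_t)=\sup_{\xi}\bigl\{J_t[\eta_t]+(J_t-D\Psi_{f_t}^*(\eta_t))[\xi]-\Psi_{f_t}^*(\eta_t)-\mathcal C_{f_t}(\xi)\bigr\}.
\]
2. By \eqref{intro-controlled-current}, \(J_t-D\Psi^*(\eta_t)\) is the Gaussian term plus the jump term. The Gaussian part satisfies \(\int \xi\cdot\sigma h\,\d\lambda-\frac12\int\xi\cdot A\xi\,\d\lambda\le\frac12\|h\|^2\), by Young's inequality. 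The jump part satisfies \(z\cdot\xi(q-1)-(e^{z\cdot\xi}-1-z\cdot\xi)\le\ell(q)\), which is the pointwise Fenchel inequality for \(\ell\) and its dual \(e^s-1\). Hence \(\Psi_{f_t}(J_t)\le J_t[\eta_t]-\Psi^*_{f_t}(\eta_t)+\mathfrak K_{f_t}(h_t,q_t)\).
3. Insert this into \eqref{intro-canonical-density-action}. The \(\Psi^*(\eta_t)\) terms cancel. What remains is \(\int_0^T J_t[\eta_t]\,\d t+\frac12[\mathcal E(f_T)-\mathcal E(f_0)]\). By the skeleton equation with \(p=-\frac12\frac{\delta\mathcal E}{\delta f}(f_t)\), together with the chain rule and the fact that \(\mathcal T\) preserves \(\mathcal E\), we get \(J_t[\eta_t]=-\frac12\frac{\d}{\d t}\mathcal E(f_t)\), so the remainder vanishes. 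This requires that \(\frac{\delta\mathcal E}{\delta f}(f_t)\) be an admissible test function and that \(\eta_t\in X\). I expect this to be exactly what membership in \(\mathcal A_{\rm reg}\) of Definition~\ref{def:regular-canonical-path} provides, possibly via an approximation of \(\frac{\delta\mathcal E}{\delta f}\) by \(C_c^\infty\) test functions.
4. Taking the infimum over \((h,q)\) gives \(I^{\rm can}\le I^{\rm SPDE}\).

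\emph{Equality on the tilted class.} For \(f\in\mathcal A_{\rm tilt}\) with tilt \(p\), set \(\xi_t=\mathscr Dp_t\). Then \(J^{h^p,q^p}_t=D\Psi^*_{f_t}(\eta_t+\xi_t)\): differentiating the Lévy–Khintchine form gives \(D\mathcal C_f(\xi)[\zeta]=\int\zeta\cdot A\xi+\int z\cdot\zeta(e^{z\cdot\xi}-1)\,\d\nu\), which matches \eqref{intro-controlled-current} because \(\sigma h^p=A\,\mathscr Dp\) and \(q^p-1=e^{z\cdot\mathscr Dp}-1\). The definition of \(\mathcal A_{\rm tilt}\) should say precisely that \(\partial_tf+\mathcal T(f)=\mathscr D^*D\Psi^*_{f}(\eta+\mathscr Dp)\). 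Given this, \((h^p,q^p)\) satisfies the skeleton equation. Its cost is finite, since \(\ell(e^s)=se^s-e^s+1\) is integrable under the exponential moments of \(\nu^{\rm tilt}\). So \((h^p,q^p)\in\mathcal U(f)\).

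Both Fenchel inequalities in step 2 are equalities at \(\xi=\xi_t\), because \(h=\sigma^{\mathsf T}\xi\) and \(q=e^{z\cdot\xi}\) are the maximizers. Also \(J_t=D\Psi^*(\eta_t+\xi_t)\) attains the supremum defining \(\Psi(J_t)\). Hence \(\Psi_{f_t}(J_t)=J_t[\eta_t]-\Psi^*(\eta_t)+\mathfrak K_{f_t}(h^p_t,q^p_t)\).

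It remains to check that \(J_t\) minimizes \(\Psi_{f_t}\) among currents \(J\) with the same \(\mathscr D^*J\). For any such \(J\), since \(J-J_t\) annihilates \(\mathscr D\)-gradients, \(\Psi(J)\ge J[\eta_t+\xi_t]-\Psi^*(\eta_t+\xi_t)=J_t[\eta_t+\xi_t]-\Psi^*(\eta_t+\xi_t)=\Psi(J_t)\). Here \(\eta_t+\xi_t\) is itself a \(\mathscr D\)-gradient, and this is what makes the argument work. Thus
\[
I^{\rm can}(f)=\int_0^T\mathfrak K_{f_t}(h^p_t,q^p_t)\,\d t\ge I^{\rm SPDE}(f).
\]
Combined with the first part, this gives \eqref{intro-rate-consistency}, and minimality of \((h^p,q^p)\) follows.

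\emph{Main obstacle.} The delicate step is the rigorous justification of the duality in infinite dimensions. This involves interchanging the supremum over \(\xi\in X\) with the channelwise Fenchel bounds, and ensuring that the gradient test fields \(\eta_t\), \(\mathscr Dp_t\) lie in \(X\), or can be approximated there, with convergence of \(\mathcal C_f\). I would first handle this using the continuity of the Hessian form assumed in Theorem~\ref{thm:intro-Levy-realizability} and the integrability built into \(\mathcal A_{\rm reg}\) and \(\mathcal A_{\rm tilt}\).
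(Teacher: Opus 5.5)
Your proposal is correct and follows essentially the paper's proof. The lower bound combines the channelwise Gaussian and Poisson Fenchel inequalities (packaged in the paper as Lemma~\ref{lem:Gaussian-Poisson-current-contraction}) with the entropy identity $\int_0^T J_t[\eta_t]\,\d t=-\frac12[\mathcal E(f_T)-\mathcal E(f_0)]$. On $\mathcal A_{\rm tilt}$, the exponential-tilt controls reproduce $D\Psi^*_{f_t}(\eta_t+\mathscr Dp_t)$ and saturate both inequalities, and they minimize $\Psi_{f_t}$ over feasible currents because $\eta_t+\mathscr Dp_t$ is itself a $\mathscr D$-gradient.

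The points you hedged on are supplied directly by the paper's definitions rather than by approximation. In the Section~4 formulation, $\mathcal U(f)$ requires the skeleton for all $\phi\in\operatorname{Dom}(\mathscr D)$, and $\mathcal A_{\rm reg}$ places $\frac{\delta\mathcal E}{\delta f}(f_t)$ in $\operatorname{Dom}(\mathscr D)$. Finiteness of the time-integrated tilt cost is assumed outright in \eqref{regular-tilt-control-cost}.
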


On the regular tilted class, the rate function obtained from the
Gaussian--Poisson stochastic integral therefore agrees with the canonical
rate function.  The fourth result compares the two Hamiltonians introduced
above and gives the Hamiltonian formulation of the stochastic hydrodynamic
limit.

\begin{theorem}
\label{thm:intro-Boltzmann-LLNS-Hamiltonian}
Fix
\begin{align*}
 u,\varphi\in C^\infty(\T^3;\R^3),
 \qquad \theta\in C^\infty(\T^3;\R),
 \qquad \nabla\cdot u=\nabla\cdot\varphi=0.
\end{align*}
Here \(u\) is the macroscopic velocity, \(\theta\) is the scalar
thermodynamic fluctuation that produces the density and temperature
parameters \(1-\delta\theta\) and \(1+\delta\theta\) in the local Maxwellian
\eqref{intro-local-Maxwellian}, and \(\varphi\) is the momentum test field.
For these fixed fields, define \(F_\delta[u,\theta]\),
\(q_{\delta,\varphi}\), \(\mathbb H_{\delta,\varepsilon}^{\rm B}\), and
\(\mathbb H_{\mathfrak a}^{\rm LLNS}\) by
\eqref{intro-local-Maxwellian}, \eqref{intro-perturbed-momentum-test},
\eqref{intro-corrected-Boltzmann-Hamiltonian}, and
\eqref{intro-LLNS-Hamiltonian}, respectively.
Assume that the collision kernel satisfies
Assumption~\ref{ass:Boltzmann-kernel-conditions}.  If
\begin{align*}
 \delta\downarrow0,\qquad \varepsilon\downarrow0,
 \qquad \frac{\varepsilon}{\delta^2}\longrightarrow\mathfrak a^2
 \quad\text{for some }\mathfrak a\geq0,
\end{align*}
then
\begin{align}\label{intro-Boltzmann-LLNS-Hamiltonian-limit}
 \mathbb H_{\delta,\varepsilon}^{\rm B}(u,\theta;\varphi)
 \longrightarrow
 \mathbb H_{\mathfrak a}^{\rm LLNS}(u;\varphi).
\end{align}
\end{theorem}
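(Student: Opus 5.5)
The plan is to expand the collision part of \(\mathbb H^{\rm B}_{\delta,\varepsilon}\) in powers of \(\varepsilon\) and the transport part in powers of \(\delta\). Two structural facts make most terms vanish.

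\emph{First fact: collision invariants.} Momentum is conserved in each collision at fixed \(x\), so
\[
\mathscr D_{\rm B}\bigl(\delta^{-1}v\cdot\varphi(x)\bigr)(c)=0 .
\]
Hence \(\mathscr D_{\rm B}q_{\delta,\varphi}=\mathscr D_{\rm B}g_\varphi\), where \(g_\varphi:=\sum_{i,j}(\nabla_{\rm sym}\varphi)_{ij}\widehat{\mathsf A}_{ij}\). This is \(O(1)\) in \(\delta\), and the singular \(\delta^{-1}\) part of the test function disappears from the collision term.

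\emph{Second fact: local Maxwellians are equilibria.} \(F_\delta[u,\theta]\) is a local Maxwellian for every \(\delta\), so \(Q_{\rm B}(F_\delta,F_\delta)=0\) pointwise. With the weak form \(\langle Q_{\rm B}(F,F),g\rangle=\frac12\int B FF_*\mathscr D_{\rm B}g\,\d c\), the first-order term of the exponential therefore vanishes exactly.

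\textbf{Collision term.} Taylor's formula gives
\[
e^{\varepsilon y}-1=\varepsilon y+\tfrac12\varepsilon^2y^2+R_\varepsilon(y),\qquad |R_\varepsilon(y)|\le \tfrac16\varepsilon^3|y|^3e^{\varepsilon|y|}.
\]
Applied with \(y=\mathscr D_{\rm B}g_\varphi\), the collision part of the Hamiltonian reduces to two pieces:
\[
\frac{\varepsilon}{4\delta^2}\int_{\mathscr C_{\rm B}}BF_\delta F_{\delta,*}\bigl(\mathscr D_{\rm B}g_\varphi\bigr)^2\,\d c
\;+\;\frac{1}{2\varepsilon\delta^2}\int_{\mathscr C_{\rm B}} BF_\delta F_{\delta,*}\,R_\varepsilon(\mathscr D_{\rm B}g_\varphi)\,\d c .
\]
The remainder is \(O(\varepsilon^2/\delta^2)=O(\varepsilon)\to0\), provided the integral of \(B F_\delta F_{\delta,*}|\mathscr D_{\rm B} g_\varphi|^3e^{\varepsilon|\mathscr D_{\rm B} g_\varphi|}\) is bounded uniformly.

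For the quadratic piece, \(\varepsilon/\delta^2\to\mathfrak a^2\), and \(F_\delta F_{\delta,*}\to MM_*\) with dominating Gaussian bounds uniform for small \(\delta\). This yields the limit
\[
\frac{\mathfrak a^2}{4}\int BMM_*(\mathscr D_{\rm B}g_\varphi)^2 .
\]
By the standard symmetrized Dirichlet-form identity for the linearized operator, this equals
\[
\mathfrak a^2\int_{\T^3}\langle Lg_\varphi,g_\varphi\rangle_{L^2(M)}\,\d x ,
\]
and I will check that the normalization of \(L\) fixed in the paper produces exactly the factor \(\tfrac14\). Using \(L\widehat{\mathsf A}_{ij}=\mathsf A_{ij}\) and the definition of \(\mu\), we get
\[
\langle Lg_\varphi,g_\varphi\rangle=\mu\,\nabla_{\rm sym}\varphi:\mathsf C:\nabla_{\rm sym}\varphi ,
\]
which is the noise term of \(\mathbb H^{\rm LLNS}_{\mathfrak a}\).

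\textbf{Transport term with \(\delta^{-1}v\cdot\varphi\).} After integration by parts this term is
\[
\delta^{-2}\int_{\T^3}\partial_j\varphi_i\int_{\R^3} v_iv_jF_\delta\,\d v\,\d x .
\]
For the local Maxwellian,
\[
\int_{\R^3} v\otimes v\,F_\delta\,\d v=\rho\tau I+\rho\,\delta^2u\otimes u=(1-\delta^2\theta^2)I+\delta^2(1-\delta\theta)u\otimes u .
\]
The isotropic part pairs with \(\nabla\cdot\varphi=0\) and vanishes. What remains converges to \(\int u_iu_j\partial_j\varphi_i\).

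\textbf{Transport term with \(g_\varphi\).} This term is \(-\delta^{-1}\langle v\cdot\nabla_xF_\delta,g_\varphi\rangle\). Expand
\[
F_\delta=M\bigl(1+\delta(-\theta+u\cdot v+\tfrac12\theta(|v|^2-3))\bigr)+O(\delta^2)
\]
in weighted norms. The zeroth order vanishes because \(M\) does not depend on \(x\). At first order, the \(\theta\)-contributions pair \(v_k\) times an even polynomial against the even function \(\widehat{\mathsf A}_{ij}\), so they vanish by parity. The \(u\)-contribution gives
\[
\int_{\T^3}\partial_ku_l\,\partial_i\varphi_j\,\langle v_kv_lM,\widehat{\mathsf A}_{ij}\rangle\,\d x .
\]
Integrating by parts in \(x\), and using \(\widehat{\mathsf A}\perp\ker L\) (so that \(v_kv_l\) may be replaced by \(\mathsf A_{kl}\)), this becomes
\[
-\mu\,\nabla u:\mathsf C:\nabla\varphi=-2\mu\,\nabla_{\rm sym}u:\nabla_{\rm sym}\varphi
\]
once \(\nabla\cdot u=\nabla\cdot\varphi=0\) is used. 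The \(O(\delta^2)\) part of \(F_\delta\) contributes \(O(\delta)\).

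\textbf{Main obstacle.} The hard step is the uniform control in the first step. It requires pointwise growth bounds on \(\widehat{\mathsf A}_{ij}\), expected of the form \(|\mathscr D_{\rm B}\widehat{\mathsf A}|\lesssim 1+|v|^2+|v_*|^2\), taken from Assumption~\ref{ass:Boltzmann-kernel-conditions}. Together with the kernel growth of \(B\), these give \(e^{\varepsilon|\mathscr D_{\rm B}g_\varphi|}F_\delta F_{\delta,*}\) an integrable Gaussian majorant uniformly for small \(\varepsilon,\delta\). I would first try to derive this from the assumed moment and growth conditions, and then apply dominated convergence in every limit above.
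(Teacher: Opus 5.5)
Your proposal is correct and follows essentially the paper's proof: momentum invariance and detailed balance of $F_\delta$ reduce and center the collision term, dominated convergence gives $\frac{\mathfrak a^2}{4}\int BMM_*|\mathscr D_{\rm B}g_\varphi|^2\,\d c=\mathfrak a^2((g_\varphi,Lg_\varphi))_M$, and the transport part uses the Maxwellian second moment together with the corrector isotropy identity; your explicit cubic Taylor remainder is only a cosmetic variant. The step you call the main obstacle is immediate, since $B\le b_+$ and $\widehat{\mathsf A}_{ij}=\lambda_{\mathsf A}^{-1}\mathsf A_{ij}$ is an explicit quadratic (Lemma~\ref{lem:linearized-Boltzmann-kernel}(iii)), so energy conservation gives $|\mathscr D_{\rm B}g_\varphi|\le C(|v|^2+|v_*|^2)$ and $F_\delta F_{\delta,*}\le CM_\beta M_{\beta,*}$ absorbs $|y|^3e^{\varepsilon|y|}$ once $\varepsilon C<\beta/2$; also, the viscous term needs no integration by parts, because its limit is directly $-\int_{\T^3}\langle v\cdot\nabla_xg_0,g_\varphi\rangle_M\,\d x$, and your intermediate display drops this minus sign.
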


\subsection{Background and related literature}

\ 

\textbf{Fluctuation--dissipation relations and generalized gradient
structures.}
We start from Onsager's theory of linear irreversible processes.  Near
equilibrium, a symmetric positive mobility gives the linear dependence of
thermodynamic currents on their conjugate forces, and microscopic
reversibility gives the reciprocal relations for the kinetic coefficients
\cite{OnsagerI,OnsagerII}.  The probabilistic statement is that the same
mobility determines the covariance of the equilibrium fluctuations.  This
fluctuation--dissipation relation was developed by Callen and Welton and by
the Green--Kubo theory of linear response
\cite{CallenWelton,GreenIrreversibleFluids,KuboLinearResponse}.  In a
Markovian Langevin description, it gives the familiar square-root-mobility
Gaussian noise, up to the normalization by the temperature and Boltzmann
constant.  Onsager and Machlup expressed the probability of an entire
Gaussian fluctuation path through the associated quadratic dissipation
function \cite{OnsagerMachlup}, while Landau and Lifshitz used the same
principle to prescribe fluctuating stresses and heat fluxes in hydrodynamics
\cite{LandauLifshitzFluidMechanics}.

Generalized gradient systems replace the quadratic Onsager dissipation by a
convex dual pair \((\Psi_f,\Psi_f^*)\).  The derivative of \(\Psi_f^*\)
then gives a nonlinear force--current law.  Such structures arise from
dynamical large deviations
\cite{MielkePeletierRenger,LieroMielkePeletierRenger}.  Diffusive
microscopic dynamics lead to quadratic potentials and Gaussian
fluctuations, whereas jump dynamics lead to nonquadratic potentials, often
of cosh type \cite{PeletierSchlichtingCosh}.  We ask for the corresponding fluctuation--dissipation
relation at the level of the full noise law.  In
\eqref{intro-centered-cumulant}, the finite-force Bregman remainder of
\(\Psi_f^*\) is identified with a L\'evy--Khintchine cumulant.  For a
quadratic potential this gives the classical Gaussian relation, while for a
nonlinear jump dissipation it also gives Poissonian fluctuations.

\textbf{GENERIC structure.}
The General Equation for Non-Equilibrium Reversible--Irreversible Coupling
(GENERIC) describes reversible and irreversible dynamics in the same
thermodynamic system.  It was developed by Grmela and
\"Ottinger \cite{GrmelaOttingerGENERICI,OttingerGrmelaGENERICII}; see also
the monograph \cite{OttingerBeyondEquilibrium}.  The reversible part is
generated by the energy through an antisymmetric Poisson operator, while
the irreversible part is generated by the entropy through a symmetric
positive dissipative operator.  Nonlinear dissipation potentials,
variational formulations, and their relations to large deviations are
studied in
\cite{MielkeGENERIC,DuongPeletierZimmerVFP}.

Pre-GENERIC combines a vector field tangent to the entropy level sets with a
generalized gradient flow, without requiring a Poisson operator or the full
GENERIC degeneracy conditions \cite{KraaijLazarescuMaesPeletier}.  When
\(\mathcal T\) preserves \(\mathcal E\), the deterministic part of the
master equation is of this form: \(\mathcal T(f)\) is the non-dissipative
kinetic transport and \((\Psi_f,\Psi_f^*)\) determines the dissipative
current.  We add a stochastic part by using the centered Bregman remainder
of the same potential \(\Psi_f^*\) as the L\'evy fluctuation cumulant.  Thus,
when the transport contributes no noise, the master equation is a
fluctuating extension of generalized pre-GENERIC.

A full GENERIC formulation also requires the Hamiltonian structure of the
transport term and may require a larger state space.  For example, the
Vlasov--Fokker--Planck density describes an open subsystem that exchanges
energy with a heat bath.  Duong, Peletier, and Zimmer obtain a closed GENERIC
system by adjoining a scalar variable that records this energy transfer
\cite[Section~3.1]{DuongPeletierZimmerVFP}.  Since we study only the fluctuations generated by the dissipative structure and want our framework to cover the single VFP equation, we do not adopt the GENERIC structure. Instead, we keep \(f\) as the state variable and write \(\mathcal T\) as the kinetic transport operator.

\textbf{Macroscopic fluctuation theory.}
Macroscopic fluctuation theory (MFT) is based on dynamical large deviations
of diffusive interacting particle systems
\cite{BertiniMFT2002,KipnisLandim,BertiniMFT}.  Its basic variables are a density and
a current linked by the continuity equation.  The constitutive current and
the mobility determine a local current cost.  Contracting over currents
compatible with a prescribed density path gives the density-path action.
The same density--current formulation gives the quasipotential and the
Hamilton--Jacobi equation for nonequilibrium steady states.  Force--current
decompositions with nonquadratic dissipation are studied in
\cite{PattersonRengerSharma,RengerMFTGENERIC}.  The canonical action used in
Section~4 is the kinetic counterpart of this construction.

\textbf{Kinetic theory and underlying particle systems.}
The kinetic equations considered here arise from different interaction
regimes: VFP and Landau equation from mean-field interacting particle systems,
and the Boltzmann equation from binary collisions.  Their physical
derivations, scalings, and mathematical structures are discussed in
\cite{CercignaniIllnerPulvirenti,SpohnLargeScale,
VillaniCollisionalKinetic}.

For the VFP equation, the
deterministic theory includes hypoelliptic regularization, hypocoercivity,
confinement, and convergence to equilibrium
\cite{Bouchut2002,VillaniHypocoercivity,DuongPeletierZimmerVFP}.
For the Landau equation, it includes homogeneous, inhomogeneous dynamics, and kinetic regularity
\cite{VillaniLandau1998,GuoLandau2002,GolseImbertMouhotVasseur}.  The corresponding
Boltzmann theory includes global solutions near vacuum, renormalized
solutions for large data, perturbative solutions near equilibrium, and the
non-cutoff theory
\cite{DiPernaLionsBoltzmann,GuoBoltzmann2004,GressmanStrain2011}.

At the microscopic level, nonlinear kinetic equations are obtained from
many-particle dynamics through propagation of chaos
\cite{Kac1956,Sznitman1991,MischlerMouhot2013,JabinWang2018}.  Deng, Hani,
and Ma recently derived the Boltzmann equation from hard-sphere dynamics on
arbitrary finite time intervals over which the corresponding Boltzmann
solution remains regular \cite{DengHaniMaLongTime}.  In a companion work,
they combined this long-time kinetic limit with hydrodynamic limits of the
Boltzmann equation to derive the compressible Euler and incompressible
Navier--Stokes--Fourier equations from hard-sphere dynamics
\cite{DengHaniMaHilbertSixth}.  Central-limit
fluctuations and pathwise large deviations retain the next-order information
which is absent from the deterministic kinetic equation
\cite{Meleard1996,DawsonGartner1987,
BodineauGallagherSaintRaymondSimonella,Daniel23}.  This is the information
which the fluctuating kinetic models are intended to describe.

\textbf{From the Boltzmann equation to the Navier--Stokes equations.}
The hydrodynamic limit of the Boltzmann equation gives a rigorous connection
between kinetic theory and incompressible fluid dynamics.  Under the
incompressible scaling, solutions close to a global Maxwellian converge, in
suitable regimes, to the incompressible Navier--Stokes--Fourier system
\cite{GolseSaintRaymondNSLimit,LevermoreMasmoudiNSF}.  These results provide the
deterministic background for the fluctuating hydrodynamic limit studied in
Section~5.

\textbf{Fluctuating hydrodynamics and conservative SPDEs.}
In fluctuating hydrodynamics, random stresses and fluxes are added to
deterministic conservation laws, with covariance fixed by thermodynamic
dissipation.  The physical derivations include the Landau--Lifshitz
formulation, the theory of hydrodynamic fluctuations, and the
Boltzmann--Langevin equation
\cite{LandauLifshitzFluidMechanics,BixonZwanzig,Espanol1998}.  Modern formulations and numerical studies emphasize the
same conservative fluctuation--dissipation balance
\cite{BellGarciaWilliams2007,DonevVandenEijndenGarciaBell2010}.

The Dean--Kawasaki equation is the diffusive example of a conservative
SPDE.  Its exact singular form, its rigidity, and its regularized or
correlated-noise variants have been studied in
\cite{Kawasaki1994,Dean1996,KonarovskyiLehmannVonRenesse2019,CornalbaFischer2023,CornalbaFischerIngmannsRaithel,
DjurdjevacKrempPerkowski,FehrmanGess2024,
DirrFehrmanGess2024,WangWuZhangSingularDK,
WuFluctuatingIsingKacKawasaki,MartiniMayorcasAdditiveNoise}.  For fluctuating fluids, the principal models are the
Landau--Lifshitz--Navier--Stokes equations
\cite{LandauLifshitzFluidMechanics,DonevVandenEijndenGarciaBell2010,
GessHeydeckerWuLLNS} and the fluctuating Navier--Stokes--Fourier system
\cite{LandauLifshitzFluidMechanics,Espanol1998,
GessSauerbreyWuNSF}.  Their
space--time-white conservative fluxes make the continuum equations highly
singular, so several works interpret them through spatial regularization,
discretization, large deviations, or effective low-wave-number dynamics.

Thermodynamically structured conservative noises also occur in stochastic
thin-film equations
\cite{FischerGrun2018,DareiotisGessGnannGrun2021,Sauerbrey2024} and in geometric SPDEs such as stochastic
curve-shortening and mean-curvature flows
\cite{DirrLuckhausNovaga2001,HofmanovaRogerVonRenesse2021}.  As in the
kinetic models considered here, their analysis involves degenerate
mobilities, conservative rough noise, positivity or geometric constraints,
and weak solution concepts.

\textbf{Large deviations for SPDEs.}
The weak-convergence approach used below is based on variational
representations for exponential functionals of the driving noise.  The
Bou\'e--Dupuis formula treats finite-dimensional Brownian motion
\cite{BoueDupuis1998}.  Its infinite-dimensional extension gives a criterion
for stochastic systems driven by Brownian noise
\cite{BudhirajaDupuis2000,BudhirajaDupuisMaroulas2008}.  In its usual SPDE form, the
criterion reduces the proof of the Laplace principle to two properties of
the controlled solution maps. This
method has been applied to stochastic fluid equations, reaction--diffusion
and porous-medium equations, conservation laws, and singular scaling limits
\cite{SritharanSundar2006,FehrmanGessNonEquilibrium,
BudhirajaDupuisMaroulas2008,ZhaiZhangSecondGrade}.

For a Poisson random measure, changing the compensator has an entropy cost
in place of the quadratic Brownian control cost.  Variational representations
for Brownian--Poisson functionals and the corresponding weak-convergence
criterion were developed in
\cite{BudhirajaDupuisMaroulas,BudhirajaChenDupuis}.  Applications include
stochastic hydrodynamical models, shell models and porous-medium equations
with jump noise
\cite{MannaSritharanSundar2009,BudhirajaChenDupuis,
YangZhaiZhangJump,ZhaiZhangLevyNS}.  We use this Gaussian--Poisson control structure in
Section~4 to derive the formal Freidlin--Wentzell action of the master
equation and compare it with the canonical action determined by the dual
dissipation potentials.

\subsection{Structure of the paper}

The structure of this paper is organized as follows.  In Section~2, we
recall the master fluctuating kinetic equation, fix its formal functional
setting, and derive the generalized gradient structures of the VFP, Landau,
and Boltzmann equations.  The kinetic \(\alpha\)-stable Dean--Kawasaki
equation is also introduced as a linear approximation of the Boltzmann
collision mechanism.  In Section~3, we prove a Hessian criterion for L\'evy
realizability and the uniqueness of the resulting local L\'evy
characteristics, and then give a Gaussian--Poisson stochastic-integral
realization of the cylindrical current.  In Section~4, we introduce the
canonical action and its thermodynamic meaning, derive the candidate SPDE
action by the weak-convergence method, and compare the two actions using
Gaussian--Poisson convex duality.  Equality is proved on a regular tilted
class, including the Boltzmann paths considered in
\cite[Appendix~B]{WuPoissonBoltzmann}, and is used to justify the nonlinear
fluctuation--dissipation relation.  In Section~5, we introduce the rescaled
fluctuating Boltzmann and LLNS models and prove their Hamiltonian consistency
under the scaling relation \(\varepsilon/\delta^2\to\mathfrak a^2\).  In
Section~6, we summarize the available results and discuss the open problems
and the scope of the program.

\section{Generalized gradient structures of kinetic equations}
\label{sec:Psi-structure-examples}

In this section, we present the formal structure of the master fluctuating
kinetic equation and show how it contains the fluctuating
Vlasov--Fokker--Planck, Landau, and Boltzmann equations.  After deriving
these three examples, we present a kinetic \(\alpha\)-stable
Dean--Kawasaki equation as a linear approximation of the Boltzmann
collision mechanism.  The discussion is structural.  Once
\(\mathcal E\), \(\mathscr D\), and \(\Psi_f^*\) are specified, each model
requires its own state space, regularization, and solution theory.

Throughout the paper, all stochastic objects are defined on a complete
filtered probability space
\begin{align*}
 (\Omega,\mathcal F,(\mathcal F_t)_{t\geq0},\mathbb P)
\end{align*}
satisfying the usual conditions.  The space is assumed to be rich enough to
support the Gaussian white noises and Poisson random measures used below.
Processes are adapted, and stochastic integrands and state-dependent
intensities are predictable whenever required.

Let \(\mathsf Z=\R_x^d\times\R_v^d\).  We write
\begin{align*}
 L^0(\mathsf Z)
 :=\{p:\mathsf Z\to\R:p\text{ is Lebesgue measurable}\}/\!\sim,
\end{align*}
where \(p\sim q\) means that \(p=q\) almost everywhere, and set
\begin{align*}
 L_+^1(\mathsf Z)
 :=\{f\in L^1(\mathsf Z):f\geq0\ \text{a.e.}\}.
\end{align*}

The entropy or free energy is an extended real-valued functional
\begin{align}\label{kinetic-state-functional-setting}
 \mathcal E:\operatorname{Dom}(\mathcal E)
 \subset L_+^1(\mathsf Z)\longrightarrow\R\cup\{+\infty\}.
\end{align}
At the states where it exists, its first variation is the map
\begin{align}\label{entropy-first-variation-domain}
 \frac{\delta\mathcal E}{\delta f}:
 \operatorname{Dom}\left(\frac{\delta\mathcal E}{\delta f}\right)
 \subset\operatorname{Dom}(\mathcal E)\longrightarrow L^0(\mathsf Z)
\end{align}
characterized formally by
\begin{align}\label{entropy-first-variation-definition}
 \left.\frac{\d}{\d s}\right|_{s=0}\mathcal E(f+sh)
 =\int_{\mathsf Z}\frac{\delta\mathcal E}{\delta f}(f)(z)h(z)\,\d z
\end{align}
for admissible perturbations \(h\).  The master equation is considered at
states for which \(\frac{\delta\mathcal E}{\delta f}(f)\) belongs to the
domain of the channel derivative introduced next.

Let \((\mathscr C,\mathcal B,\lambda)\) be a standard Borel space with a
\(\sigma\)-finite measure \(\lambda\).  Its points label the elementary
channels through which the kinetic state dissipates.  A channel records the
variables needed to specify one local dissipative or collision event, and
its form depends on the model.  For VFP,
\(\mathscr C=\R_x^d\times\R_v^d\) and \(c=(x,v)\).  For Landau,
\(\mathscr C=\R_x^d\times\R_v^d\times\R_{v_*}^d\) and
\(c=(x,v,v_*)\).  For Boltzmann,
\(\mathscr C=\R_x^d\times\R_v^d\times\R_{v_*}^d\times\mathbb S^{d-1}\)
and \(c=(x,v,v_*,\sigma)\).  For the kinetic \(\alpha\)-stable model,
\(\mathscr C=\R_x^d\times\R_v^d\times\R_*^d\) and \(c=(x,v,z)\).
The channel current is vector-valued in the VFP and Landau examples and
scalar-valued in the Boltzmann and \(\alpha\)-stable examples.

A channel field is a measurable map
\(\xi:\mathscr C\to\R^m\), where \(m=d\) for VFP and Landau and \(m=1\)
for Boltzmann and the \(\alpha\)-stable model.  It tests, or biases, the
current carried by every elementary channel.  As usual,
\(L^0(\mathscr C,\lambda;\R^m)\) denotes these fields modulo
\(\lambda\)-almost-everywhere equality.  We use a real Banach space
\begin{align}\label{Banach-channel-space}
 X\subset L^0(\mathscr C,\lambda;\R^m)
\end{align}
and denote its continuous dual by \(X^*\).  Suppose that a channel current
\(J\in X^*\) is induced by an \(\R^m\)-valued measure that is absolutely
continuous with respect to \(\lambda\).  Writing its Radon--Nikodym density
as \(j:\mathscr C\to\R^m\), we have
\(J(\d c)=j(c)\lambda(\d c)\), and its action on \(\xi\in X\) is
\begin{align}\label{channel-pairing}
 J[\xi]=\langle j,\xi\rangle_{\mathscr C}
 :=\int_{\mathscr C}j(c)\cdot\xi(c)\,\lambda(\d c).
\end{align}

The channel derivative is a linear map
\begin{align}\label{channel-derivative-domain}
 \mathscr D:\operatorname{Dom}(\mathscr D)
 \subset L^0(\mathsf Z)\longrightarrow X.
\end{align}
We regard \(\operatorname{Dom}(\mathscr D)\) as a topological vector space
on which \(\mathscr D\) is continuous, and denote its continuous dual by
\(\bigl(\operatorname{Dom}(\mathscr D)\bigr)^*\).  The relevant kinetic
states are assumed to act continuously on this test space through the
density--test pairing, and are therefore also regarded as elements of
\(\bigl(\operatorname{Dom}(\mathscr D)\bigr)^*\).  

For every state \(f\), the dual dissipation potential is a proper, lower
semicontinuous, even convex functional
\begin{align}\label{dual-dissipation-potential-domain}
 \Psi_f^*:X\longrightarrow[0,+\infty],
 \qquad \Psi_f^*(0)=0, 
\end{align}
with domain
\(\operatorname{Dom}(\Psi_f^*):=\{\xi\in X:\Psi_f^*(\xi)<\infty\}\).  Its Legendre
dual is
\begin{align}\label{primal-dissipation-potential}
 \Psi_f:X^*\longrightarrow[0,+\infty],\qquad \Psi_f(J):=\sup_{\xi\in X}\{J[\xi]-\Psi_f^*(\xi)\}.
\end{align}
At its differentiability points, the G\^ateaux derivative is the map
\begin{align}\label{dual-dissipation-derivative-domain}
 D\Psi_f^*:\operatorname{Dom}(D\Psi_f^*)
 \subset\operatorname{Dom}(\Psi_f^*)\longrightarrow X^*,
\end{align}
defined by
\begin{align*}
 D\Psi_f^*(\eta)[\xi]
 :=\left.\frac{\d}{\d s}\right|_{s=0}\Psi_f^*(\eta+s\xi), \quad \text{for any }\xi\in X. 
\end{align*}

The weak adjoint of the channel derivative 
\begin{align}\label{channel-adjoint-domain}
 \mathscr D^*:X^*\longrightarrow
 \bigl(\operatorname{Dom}(\mathscr D)\bigr)^*
\end{align}
is defined by
\begin{align}\label{channel-adjoint-pairing}
 \langle\mathscr D^*J,p\rangle:=J[\mathscr Dp],
 \qquad J\in X^*,\ p\in\operatorname{Dom}(\mathscr D).
\end{align}
The nondissipative kinetic transport has the form
\begin{align}\label{kinetic-transport-domain}
 \mathcal T:\operatorname{Dom}(\mathcal T)
 \subset L_+^1(\mathsf Z)\longrightarrow
 \bigl(\operatorname{Dom}(\mathscr D)\bigr)^*.
\end{align}
All domains above are chosen for the particular kinetic model under
consideration.

The entropy force and the corresponding deterministic dissipative current
are
\begin{align}\label{entropy-force-convention}
 \eta_f:=-\frac12\mathscr D
 \frac{\delta\mathcal E}{\delta f}(f)
 \in\operatorname{Dom}(D\Psi_f^*),
 \qquad D\Psi_f^*(\eta_f)\in X^*.
\end{align}
The nonlinear fluctuation--dissipation relation assigns to the same
potential the centered functional
\begin{align}\label{centered-Psi-cumulant}
 \mathcal C_f:X&\longrightarrow[0,+\infty],
 \notag\\
 \xi&\longmapsto\Psi_f^*(\eta_f+\xi)-\Psi_f^*(\eta_f)
 -D\Psi_f^*(\eta_f)[\xi].
\end{align}
Here \(\mathcal C_f(\xi)=+\infty\) is allowed when
\(\eta_f+\xi\notin\operatorname{Dom}(\Psi_f^*)\).  Substitution of
\(\xi=0\) gives \(\mathcal C_f(0)=0\).  Moreover, for every \(h\in X\),
\begin{align*}
 D\mathcal C_f(0)[h]
 &=D\Psi_f^*(\eta_f)[h]-D\Psi_f^*(\eta_f)[h]=0.
\end{align*}
Finally, convexity of \(\Psi_f^*\) gives the following inequality
\begin{align*}
 \Psi_f^*(\eta_f+\xi)
 \geq\Psi_f^*(\eta_f)+D\Psi_f^*(\eta_f)[\xi],
\end{align*}
and hence \(\mathcal C_f(\xi)\geq0\). This implies that \(\mathcal C_f\) is centered at the
origin and has no linear part.  Under the precise integrability conditions stated in Assumption~\ref{ass:Hessian-Levy-sufficient-condition}, there are a measurable symmetric
nonnegative matrix field \(A_f(c)\) and a L\'evy measure
\(\nu_f^{\rm tilt}\) such that
\begin{align}\label{abstract-current-Levy-Khintchine}
 \mathcal C_f(\xi)
 ={}\frac12\int_{\mathscr C}
 \xi(c)\cdot A_f(c)\xi(c)\,\lambda(\d c)+\int_{\mathscr C\times(\R^m\setminus\{0\})}
 \bigl(e^{z\cdot\xi(c)}-1-z\cdot\xi(c)\bigr)
 \nu_f^{\rm tilt}(\d c,\d z).
\end{align}

When this representation holds on \(X\), the centered current at a frozen
state \(f\), denoted by \(\mathbb L^{\varepsilon,f}\), is a cylindrical
L\'evy process over \(X\): for every \(\xi_1,\ldots,\xi_n\in X\), the process
\[
 \bigl(\langle\mathbb L_t^{\varepsilon,f},\xi_1\rangle_{\mathscr C},
 \ldots,
 \langle\mathbb L_t^{\varepsilon,f},\xi_n\rangle_{\mathscr C}\bigr)
\]
is an \(\R^n\)-valued L\'evy process.  Thus
\(\mathbb L^{\varepsilon,f}\) is formally an \(X^*\)-valued current,
although a cylindrical process need not have an actual \(X^*\)-valued
realization.  Its one-dimensional cumulants satisfy
\begin{align}\label{conditional-Levy-current-definition}
 \varepsilon\log\E\exp\left\{
 \varepsilon^{-1}\left\langle
 \mathbb L_{t+h}^{\varepsilon,f}-\mathbb L_t^{\varepsilon,f},\xi
 \right\rangle_{\mathscr C}\right\}
 =h\mathcal C_f(\xi),
 \qquad t,h\geq0.
\end{align}
The image under \(\mathscr D^*\) is the cylindrical
\(\bigl(\operatorname{Dom}(\mathscr D)\bigr)^*\)-valued process defined by
\begin{align}\label{cylindrical-adjoint-current}
 \left\langle\mathscr D^*\mathbb L_t^{\varepsilon,f},p\right\rangle
 :=\left\langle\mathbb L_t^{\varepsilon,f},\mathscr Dp
 \right\rangle_{\mathscr C},
 \qquad p\in\operatorname{Dom}(\mathscr D).
\end{align}

With these conventions, the master fluctuating kinetic equation is the
formal identity in \(\bigl(\operatorname{Dom}(\mathscr D)\bigr)^*\).
Formally, \(F_t^\varepsilon\in L_+^1(\mathsf Z)\), and we assume that
\(\mathcal E(F_t^\varepsilon)\),
\(\frac{\delta\mathcal E}{\delta f}(F_t^\varepsilon)\), and
\(\mathcal T(F_t^\varepsilon)\) are well defined.  We then write
\begin{align}\label{unified-fluctuating-kinetic-equation}
 \d F_t^\varepsilon+\mathcal T(F_t^\varepsilon)\,\d t
 ={}&\mathscr D^*D\Psi_{F_t^\varepsilon}^*(\eta_{F_t^\varepsilon})\,\d t
 +\mathscr D^*\,\d\mathbb L_t^{\varepsilon,F_{t-}^\varepsilon}.
\end{align}
For a density \(f\) and a test function \(p\), write
\(\langle f,p\rangle:=\int_{\mathsf Z}f(z)p(z)\,\d z\) whenever this
integral is finite.  The weak form of
\eqref{unified-fluctuating-kinetic-equation} is then, for every
\(p\in\operatorname{Dom}(\mathscr D)\),
\begin{align}\label{unified-fluctuating-kinetic-equation-weak}
 \d\langle F_t^\varepsilon,p\rangle
 +\langle\mathcal T(F_t^\varepsilon),p\rangle\,\d t
 ={}&D\Psi_{F_t^\varepsilon}^*(\eta_{F_t^\varepsilon})
 [\mathscr Dp]\,\d t
 +\d\left\langle
 \mathbb L_t^{\varepsilon,F_{t-}^\varepsilon},\mathscr Dp
 \right\rangle_{\mathscr C}.
\end{align}
Throughout this paper, the equation serves as a structural identity, and the
solution theories of concrete models belong to Step~2 of
the program.  The space \(X\) and the domains introduced above provide a
functional setting in which the channel derivative, the dissipation
potential, and the current can be paired.  They do not constitute a
solution theory for \eqref{unified-fluctuating-kinetic-equation}, and no
well-posedness result for its model-dependent realizations is proved here.
Their later roles are more specific.  In Section~\ref{sec:Levy-realizability}
we freeze the state \(f\) and ask whether the Bregman remainder of
\(\Psi_f^*\) uniquely determines local L\'evy characteristics and a
stochastic-integral realization.  The uniqueness of the local
characteristics requires \(X\) to contain sufficiently many localized
channel fields.  In Section~\ref{sec:rate-identification}, the same space
\(X\) is fixed along an entire path, and the regular path classes are
defined by requiring the entropy forces, tilted forces, and channel
currents to belong to the corresponding domains.  Thus these path classes
depend on the chosen functional setting, even though their solution theory
is not developed in this paper.

For each example below, we specify
\(\mathcal E,\mathcal T,\mathscr D\), and \(\Psi_f^*\).  The calculations
are formal and are understood for smooth strictly positive densities with
enough decay to justify the integrations by parts.

\medskip
\noindent\textbf{The Vlasov--Fokker--Planck equation.}
Let
\begin{align*}
 \mathsf Z=\R_x^d\times\R_v^d,\qquad
 \mathscr C=\R_x^d\times\R_v^d,\qquad
 \lambda(\d c)=\d x\,\d v,
\end{align*}
and fix the channel space
\begin{align}\label{VFP-channel-space}
 X_{\mathrm{VFP}}
 :=L^\infty(\mathscr C;\R^d).
\end{align}
We work at states for which
\(\nabla_v\log f+v\in X_{\mathrm{VFP}}\).  Since \(f\in L^1(\mathsf Z)\),
the quadratic potential below and all of its derivatives are finite and
continuous on this fixed Banach space.  We take
\begin{align}\label{VFP-channel-data}
 \mathcal T_{\mathrm{VFP}}(f)
 &:=v\cdot\nabla_xf,&
 \mathscr D_{\mathrm{VFP}}p&:=\nabla_vp,\\
 \Psi_{f,\mathrm{VFP}}^*(\xi)
 &:=\int_{\mathsf Z}f(x,v)|\xi(x,v)|^2\,\d x\,\d v.
\end{align}

The force comes from the relative free energy
\begin{align}\label{VFP-free-energy}
 \mathcal E_{\mathrm{VFP}}(f)
 :=\int_{\mathsf Z}f\log\frac{f}{M}\,\d x\,\d v,
 \qquad
 M(v):=(2\pi)^{-d/2}e^{-|v|^2/2}.
\end{align}
Up to an irrelevant additive constant,
\[
 \frac{\delta\mathcal E_{\mathrm{VFP}}}{\delta f}
 =\log f+1+\frac{|v|^2}{2}.
\]
Since \(\mathscr D_{\mathrm{VFP}}=\nabla_v\),
\begin{align*}
 -\frac12\mathscr D_{\mathrm{VFP}}
 \frac{\delta\mathcal E_{\mathrm{VFP}}}{\delta f}
 =-\frac12(\nabla_v\log f+v),
\end{align*}
which explains the choice in \eqref{VFP-channel-data}.

For arbitrary channel fields \(\eta,\xi\), direct differentiation gives
\begin{align}\label{VFP-Psi-derivative}
 D\Psi_{f,\mathrm{VFP}}^*(\eta)[\xi]
 =\left.\frac{\d}{\d s}\right|_{s=0}
 \int_{\mathsf Z}f|\eta+s\xi|^2\,\d x\,\d v=2\int_{\mathsf Z}f\,\eta\cdot\xi\,\d x\,\d v.
\end{align}
Thus the derivative is represented by the channel-current density
\[
 D\Psi_{f,\mathrm{VFP}}^*(\eta)=2f\eta.
\]
The Legendre transform can also be computed pointwise, here we omit the calculation.  For a channel-current
density \(j:\mathsf Z\to\R^d\),
\begin{align}\label{VFP-primal-Psi}
 \Psi_{f,\mathrm{VFP}}(j)
 &:=\sup_{\eta\in X_{\mathrm{VFP}}}
 \left\{
 \int_{\mathsf Z}j\cdot\eta\,\d x\,\d v
 -\Psi_{f,\mathrm{VFP}}^*(\eta)
 \right\}=\frac14\int_{\mathsf Z}\frac{|j(x,v)|^2}{f(x,v)}
 \,\d x\,\d v.
\end{align}
The pointwise maximizer is \(\eta=j/(2f)\), equivalently
\(j=D\Psi_{f,\mathrm{VFP}}^*(\eta)\).  Thus the primal potential measures
the quadratic cost of a velocity current relative to the mobility \(2f\).
Evaluating it at the physical force,
\begin{align}\label{VFP-current-density}
 D\Psi_{f,\mathrm{VFP}}^*
 \left(-\frac12(\nabla_v\log f+v)\right)
 =-\nabla_vf-vf.
\end{align}
Because
\(\mathscr D_{\mathrm{VFP}}^*=-\operatorname{div}_v\), for every test
function \(p\),
 \begin{align*}
 \left\langle\mathscr D_{\mathrm{VFP}}^*
 D\Psi_{f,\mathrm{VFP}}^*
 \left(-\frac12(\nabla_v\log f+v)\right),p\right\rangle
 &=\int_{\mathsf Z}(-\nabla_vf-vf)\cdot\nabla_vp\,\d x\,\d v\\
 &=\left\langle\Delta_vf+\nabla_v\cdot(vf),p\right\rangle.
\end{align*}
This recovers the Ornstein--Uhlenbeck part of VFP.  

Since the potential is quadratic, its Bregman remainder is exact:
\begin{align}\label{VFP-centered-cumulant}
 \mathcal C_{f,\mathrm{VFP}}(\xi)
 &=
 \int_{\mathsf Z}f\bigl(
 |-\tfrac12(\nabla_v\log f+v)+\xi|^2
 -|-\tfrac12(\nabla_v\log f+v)|^2
 +(\nabla_v\log f+v)\cdot\xi\bigr)\,\d x\,\d v\notag\\
 &=\int_{\mathsf Z}f|\xi|^2\,\d x\,\d v.
\end{align}
Comparison with \eqref{abstract-current-Levy-Khintchine} uniquely gives
\begin{align*}
 A_f(x,v)=2f(x,v)I_d,
 \qquad
 \nu_f^{\rm tilt}=0.
\end{align*}
Thus the current has no jumps and is Gaussian.  If \(W\) is an
\(\R^d\)-valued \(L^2(\mathcal C)\)-cylindrical Wiener process, set
\begin{align*}
 \d\mathbb L_t^{\varepsilon,f}
 =\sqrt{2\varepsilon f}\,\d W_t.
\end{align*}
Conditionally on the frozen state \(f\), its scalar projection has quadratic
variation
\begin{align*}
 \d\left\langle
 \langle\mathbb L^{\varepsilon,f},\xi\rangle_{\mathscr C}
 \right\rangle_t
 =2\varepsilon\int_{\mathsf Z}f|\xi|^2\,\d x\,\d v\,\d t,
\end{align*}
so the Gaussian exponential formula yields
\eqref{conditional-Levy-current-definition}.  Applying
\(\mathscr D_{\mathrm{VFP}}^*=-\operatorname{div}_v\) produces the
conservative square-root noise.  Therefore the master equation becomes
\begin{align}\label{unified-gives-VFP}
 \d F_t^\varepsilon
 +v\cdot\nabla_xF_t^\varepsilon\,\d t
 =\left(\Delta_vF_t^\varepsilon
       +\nabla_v\cdot(vF_t^\varepsilon)\right)\d t
 +\sqrt{2\varepsilon}\,
   \operatorname{div}_v\left(\sqrt{F_t^\varepsilon}\,\d W_t\right),
\end{align}
up to the irrelevant replacement \(W\mapsto-W\).  This is the fluctuating
VFP--Dean--Kawasaki equation studied in \cite{HaoWuZimmerVFPDK}.

\medskip
\noindent\textbf{The Landau equation.}
Let
\(\mathsf Z=\R_x^d\times\R_v^d\).  The channels are particle pairs at
the same spatial point:
\begin{align*}
 \mathscr C
 :=\R_x^d\times\R_v^d\times\R_{v_*}^d,\qquad
 \lambda(\d c):=\d x\,\d v\,\d v_*.
\end{align*}
For this example, fix the ambient channel space
\begin{align}\label{Landau-channel-space}
 X_{\mathrm L}:=L^\infty(\mathscr C;\R^d).
\end{align}
Let
\[
 \Pi_{(v-v_*)^\perp}
 :=I-\frac{(v-v_*)\otimes(v-v_*)}{|v-v_*|^2},
 \qquad v\neq v_*,
\]
with an arbitrary value when \(v=v_*\), and let \(A(v-v_*)\geq0\) be the
scalar Landau collision weight.  The Landau channel derivative is the
projected difference gradient
\begin{align}\label{Landau-channel-derivative}
 \mathscr D_{\mathrm L}p(x,v,v_*)
 :=\Pi_{(v-v_*)^\perp}
 \bigl(\nabla_vp(x,v)-\nabla_{v_*}p(x,v_*)\bigr).
\end{align}
It is antisymmetric under \(v\leftrightarrow v_*\), and its adjoint is
defined by
\begin{align*}
 \langle\mathscr D_{\mathrm L}^*j,p\rangle
 =\int_{\mathscr C}j(x,v,v_*)\cdot
 \mathscr D_{\mathrm L}p(x,v,v_*)\,\d x\,\d v\,\d v_*.
\end{align*}
The range of \(\mathscr D_{\mathrm L}\) lies in the closed subspace of
\(X_{\mathrm L}\) formed by fields that are antisymmetric under
\(v\leftrightarrow v_*\) and tangent to \((v-v_*)^\perp\).  Components
outside this effective subspace are invisible to
\(\mathscr D_{\mathrm L}^*\).  We use the full ambient space in
\eqref{Landau-channel-space} so that localized channel fields remain
available, while all forces and currents appearing below lie in the
effective subspace.  The states are restricted so that the entropy force
in \eqref{Landau-force} is bounded and
\(A(v-v_*)f(x,v)f(x,v_*)\) is integrable on \(\mathscr C\).

The dissipated functional is the Boltzmann entropy
\begin{align}\label{Landau-entropy}
 \mathcal E_{\mathrm L}(f)
 :=\int_{\mathsf Z}f\log f\,\d x\,\d v,
 \qquad
 \frac{\delta\mathcal E_{\mathrm L}}{\delta f}
 =\log f+1.
\end{align}
Because the difference gradient annihilates the constant \(1\),
\eqref{entropy-force-convention} becomes
\begin{align}\label{Landau-force}
 -\frac12\mathscr D_{\mathrm L}\log f
 =-\frac12\Pi_{(v-v_*)^\perp}
 \bigl(\nabla_v\log f-\nabla_{v_*}\log f_*\bigr),
\end{align}
where \(f=f(x,v)\) and \(f_*=f(x,v_*)\).  Set
\begin{align}\label{Landau-Psi-data}
 \mathcal T_{\mathrm L}(f):=v\cdot\nabla_xf,\qquad \Psi_{f,\mathrm L}^*(\xi)
 :=\frac12\int_{\mathscr C}
 A(v-v_*)ff_*|\xi(x,v,v_*)|^2
 \,\d x\,\d v\,\d v_*.
\end{align}

For arbitrary effective pair fields \(\eta,\xi\),
\begin{align}\label{Landau-Psi-derivative}
 D\Psi_{f,\mathrm L}^*(\eta)[\xi]
 =\left.\frac{\d}{\d s}\right|_{s=0}
 \frac12\int_{\mathscr C}Aff_*|\eta+s\xi|^2\,\lambda(\d c)=\int_{\mathscr C}Aff_*\,\eta\cdot\xi\,\lambda(\d c).
\end{align}
This implies that 
\[
 D\Psi_{f,\mathrm L}^*(\eta)=Aff_*\eta.
\]
This pair-current formulation agrees with the gradient-flow geometry of the
Landau equation developed in
\cite{CarrilloDelgadinoDesvillettesWuLandau}.  The quadratic Legendre
transform of \(\Psi_{f,\mathrm L}^*\) on effective pair-current densities is
\begin{align}\label{Landau-primal-Psi}
 \Psi_{f,\mathrm L}(j)
 &=\frac12\int_{\mathscr C}
 \frac{|j(x,v,v_*)|^2}
 {A(v-v_*)f(x,v)f(x,v_*)}\,\d x\,\d v\,\d v_*.
\end{align}
The right-hand side is \(+\infty\) unless \(j\) is antisymmetric, tangent to
\((v-v_*)^\perp\), and vanishes wherever
\(A(v-v_*)f(x,v)f(x,v_*)=0\).  On the finite domain, the maximizing force
field is
\[
 \eta(x,v,v_*)
 =\frac{j(x,v,v_*)}{A(v-v_*)f(x,v)f(x,v_*)}.
\]
Consequently, at the physical force,
\[
 D\Psi_{f,\mathrm L}^*
 \left(-\frac12\mathscr D_{\mathrm L}\log f\right)
 =-\frac12Aff_*\mathscr D_{\mathrm L}\log f.
\]
For every test function \(p\),
\begin{align}\label{Landau-drift-identity}
 \left\langle
 \mathscr D_{\mathrm L}^*
 D\Psi_{f,\mathrm L}^*
 \left(-\frac12\mathscr D_{\mathrm L}\log f\right),p\right\rangle
 =-\frac12\int_{\mathscr C}
 Aff_*\mathscr D_{\mathrm L}\log f\cdot
 \mathscr D_{\mathrm L}p\,\lambda(\d c)
 =\langle Q_{\mathrm L}(f,f),p\rangle.
\end{align}
This is the weak Onsager form of the Landau collision operator and explains
both the force and the factor \(1/2\).

The quadratic Bregman remainder reads 
\begin{align}\label{Landau-centered-cumulant}
 \mathcal C_{f,\mathrm L}(\xi)
 =\frac12\int_{\mathscr C}Aff_*|\xi|^2\,\lambda(\d c).
\end{align}
Hence the L\'evy characteristics are
\begin{align*}
 A_f(x,v,v_*)=A(v-v_*)f(x,v)f(x,v_*)I_d,
 \qquad
 \nu_f^{\rm tilt}=0.
\end{align*}
The covariance therefore suggests a Gaussian pair current.  Let
\(W^{\mathrm a}\) be \(L^2(\mathcal C)\)-cylindrical Wiener process,
normalized as the identity on its antisymmetric tangential fields; it may
be obtained by antisymmetrizing a standard pair white noise and projecting
onto \((v-v_*)^\perp\).
Then
\begin{align*}
 \d\mathbb L_t^{\varepsilon,f}
 =\sqrt{\varepsilon A(v-v_*)f(x,v)f(x,v_*)}\,
   \d W_t^{\mathrm a}(x,v,v_*).
\end{align*}
For every antisymmetric test field \(\xi\), the quadratic variation of its
projection is
\begin{align*}
 \d\left\langle
 \langle\mathbb L^{\varepsilon,f},\xi\rangle_{\mathscr C}
 \right\rangle_t
 =\varepsilon\int_{\mathscr C}Aff_*|\xi|^2\,\lambda(\d c)\,\d t,
\end{align*}
which is twice \(\varepsilon\mathcal C_{f,\mathrm L}(\xi)\), as required by
the Gaussian exponential formula.  Applying
\(\mathscr D_{\mathrm L}^*\) gives the fluctuating Landau equation
\begin{align}\label{unified-gives-Landau}
 \d F_t^\varepsilon+v\cdot\nabla_xF_t^\varepsilon\,\d t
 =Q_{\mathrm L}(F_t^\varepsilon,F_t^\varepsilon)\,\d t
 +\mathscr D_{\mathrm L}^*\left(
 \sqrt{\varepsilon A(v-v_*)
 F_t^\varepsilon(x,v)F_t^\varepsilon(x,v_*)}\,
 \d W_t^{\mathrm a}\right).
\end{align}
The homogeneous equation is obtained by deleting \(x\) and
the transport term, in that case, the resulting equation agrees with the
fluctuating Landau equation studied in
\cite{DuongHeWuLandau}. 

\medskip
\noindent\textbf{The Boltzmann equation.}
Let
\(\mathsf Z=\R_x^d\times\R_v^d\) and
\begin{align*}
 \mathscr C
 :=\R_x^d\times\R_v^d\times\R_{v_*}^d\times\mathbb S^{d-1},
 \qquad
 \lambda(\d c):=\d x\,\d v\,\d v_*\,\d\sigma.
\end{align*}
Fix the scalar channel space
\begin{align}\label{Boltzmann-channel-space}
 X_{\mathrm B}:=L^\infty(\mathscr C).
\end{align}
A channel \(c=(x,v,v_*,\sigma)\) records variables included in the elastic binary collision
\((v,v_*)\mapsto(v',v_*')\), where
\begin{align}\label{Boltzmann-post-collisional-velocities}
 v'&:=\frac{v+v_*}{2}+\frac{|v-v_*|}{2}\,\sigma,
 &
 v_*'&:=\frac{v+v_*}{2}-\frac{|v-v_*|}{2}\,\sigma.
\end{align}
Consequently
\[
 v'+v_*'=v+v_*,
 \qquad
 |v'|^2+|v_*'|^2=|v|^2+|v_*|^2.
\]
Define the collision increment
\begin{align}\label{Boltzmann-channel-derivative}
 \mathscr D_{\mathrm B}p(c)
 :=p(x,v')+p(x,v_*')-p(x,v)-p(x,v_*),
\end{align}
and the corresponding signed state increment
\begin{align*}
 \mathcal J_c
 :=\delta_{(x,v')}+\delta_{(x,v_*')}
 -\delta_{(x,v)}-\delta_{(x,v_*)}.
\end{align*}
Thus
\(\mathscr D_{\mathrm B}p(c)=\langle\mathcal J_c,p\rangle\) and
\[
 \mathscr D_{\mathrm B}^*j
 =\int_{\mathscr C}j(c)\mathcal J_c\,\lambda(\d c)
\]
in the distributional sense.

The dissipated functional is again the Boltzmann entropy
\begin{align}\label{Boltzmann-entropy}
 \mathcal E_{\mathrm B}(f)
 :=\int_{\mathsf Z}f\log f\,\d x\,\d v,
 \qquad
 \frac{\delta\mathcal E_{\mathrm B}}{\delta f}=\log f+1.
\end{align}
For brevity, we define  
\begin{align}\label{Boltzmann-a-b}
 a(c):=f(x,v)f(x,v_*),
 \qquad
 b(c):=f(x,v')f(x,v_*').
\end{align}
Since \(\mathscr D_{\mathrm B}1=0\),
\begin{align}\label{Boltzmann-force-from-entropy}
 -\frac12\mathscr D_{\mathrm B}
 \frac{\delta\mathcal E_{\mathrm B}}{\delta f}
 =-\frac12\log\frac{b}{a}
 =\frac12\log\frac{a}{b}.
\end{align}
Hence the logarithmic ratio of the pre- and post-collisional densities is
exactly the entropy force along a collision channel.
We work at states for which this force belongs to \(X_{\mathrm B}\) and
the collision activity
\begin{align*}
 \int_{\mathscr C}B(v-v_*,\sigma)\sqrt{a(c)b(c)}\,\lambda(\d c)
\end{align*}
is finite.  Under these conditions, the cosh potential below is twice
continuously G\^ateaux differentiable on the fixed space
\(X_{\mathrm B}\).

Let \(B(v-v_*,\sigma)\) be a microreversible collision kernel, abbreviate
\(B(c):=B(v-v_*,\sigma)\), and set
\begin{align}\label{Boltzmann-Psi-data}
 \mathcal T_{\mathrm B}(f):=v\cdot\nabla_xf,\qquad \Psi_{f,\mathrm B}^*(\xi)
 :=\frac12\int_{\mathscr C}B(v-v_*,\sigma)
 \sqrt{a(c)b(c)}\bigl(\cosh\xi(c)-1\bigr)\lambda(\d c).
\end{align}
For arbitrary scalar channel fields \(\eta,\xi\),
\begin{align}\label{Boltzmann-Psi-derivative}
 D\Psi_{f,\mathrm B}^*(\eta)[\xi]
 =\left.\frac{\d}{\d s}\right|_{s=0}
 \frac12\int_{\mathscr C}B\sqrt{ab}
 \bigl(\cosh(\eta+s\xi)-1\bigr)\,\lambda(\d c)=\frac12\int_{\mathscr C}B\sqrt{ab}\,
 \sinh(\eta)\xi\,\lambda(\d c).
\end{align}
The scalar cosh--arsinh Legendre pair is given explicitly in
\cite[Eqs.~(1.2)--(1.3)]{PeletierSchlichtingCosh}, while the corresponding
dual dissipation potential for the Boltzmann collision operator appears in
\cite[Example~D, Eq.~(1.45)]{PeletierSchlichtingCosh}.  In our
normalization, the pointwise maximizer in the Legendre transform satisfies
\begin{align*}
 j=\frac12B\sqrt{ab}\,\sinh\xi,
 \qquad
 \xi=\operatorname{arsinh}\left(\frac{2j}{B\sqrt{ab}}\right).
\end{align*}
Substituting this value of \(\xi\), we obtain
\begin{align}\label{Boltzmann-primal-Psi}
 \Psi_{f,\mathrm B}(j)
 =\int_{\mathscr C}\Bigg[
 &j\,\operatorname{arsinh}\left(\frac{2j}{B\sqrt{ab}}\right)
 -\sqrt{j^2+\frac14B^2ab}
 +\frac12B\sqrt{ab}
 \Bigg]\,\lambda(\d c).
\end{align}

At points where \(B\sqrt{ab}=0\), the integrand is understood to be \(0\)
when \(j=0\) and \(+\infty\) otherwise.  This \(\operatorname{arsinh}\)-type
functional is the nonlinear cost of prescribing a net collision current.
At the physical force,
\begin{align*}
 e^{\frac12\log(a/b)}=\sqrt{\frac{a}{b}},
 \qquad
 e^{-\frac12\log(a/b)}=\sqrt{\frac{b}{a}},
 \qquad
 \sinh\left(\frac12\log\frac{a}{b}\right)
 =\frac{a-b}{2\sqrt{ab}}.
\end{align*}
Therefore
\(D\Psi_{f,\mathrm B}^*\left(\frac12\log\frac{a}{b}\right)\)
is represented by
the net collision-current density
\begin{align}\label{Boltzmann-current-density}
 j_f(c)=\frac14B(v-v_*,\sigma)\bigl(a(c)-b(c)\bigr).
\end{align}
For every test function \(p\),
\begin{align}\label{Boltzmann-drift-identity}
 \left\langle\mathscr D_{\mathrm B}^*
 D\Psi_{f,\mathrm B}^*
 \left(\frac12\log\frac{a}{b}\right),p\right\rangle
 =\frac14\int_{\mathscr C}B(a-b)\mathscr D_{\mathrm B}p\,\lambda(\d c)=\frac12\int_{\mathscr C}Ba\,\mathscr D_{\mathrm B}p\,\lambda(\d c)
 =\langle Q_{\mathrm B}(f,f),p\rangle.
\end{align}
The second equality uses the pre--post-collisional change of variables,
which interchanges \(a\) and \(b\) and changes
\(\mathscr D_{\mathrm B}p\) into
\(-\mathscr D_{\mathrm B}p\).  This calculation explains both the cosh
potential and its normalization.

We next compute the centered fluctuation functional.  Pointwise in \(c\),
\begin{align*}
 &\cosh\left(\frac12\log\frac{a}{b}+\xi\right)
 -\cosh\left(\frac12\log\frac{a}{b}\right)
 -\sinh\left(\frac12\log\frac{a}{b}\right)\xi\\
 &\quad=\frac12e^{\frac12\log(a/b)}(e^\xi-1-\xi)
 +\frac12e^{-\frac12\log(a/b)}(e^{-\xi}-1+\xi).
\end{align*}
Using
\(\sqrt{ab}\,e^{\frac12\log(a/b)}=a\) and
\(\sqrt{ab}\,e^{-\frac12\log(a/b)}=b\), we obtain
\begin{align}\label{Boltzmann-centered-cumulant}
 \mathcal C_{f,\mathrm B}(\xi)
 =\frac14\int_{\mathscr C}B
 \Bigl[
 a\bigl(e^\xi-1-\xi\bigr)
 +b\bigl(e^{-\xi}-1+\xi\bigr)
 \Bigr]\,\lambda(\d c).
\end{align}
In contrast with the L\'evy--Khintchine formula, we notice that the  Gaussian part \(A_f=0\), and the two jump directions are uniquely identified by  
\begin{align}\label{Boltzmann-Levy-measure}
 \nu_{f,\mathrm B}^{\rm tilt}(\d c,\d z)
 :=\frac14B(v-v_*,\sigma)
 \left[
 a(c)\delta_{+1}(\d z)
 +b(c)\delta_{-1}(\d z)
 \right]\lambda(\d c).
\end{align}
Then we can see that \(a/4\) is the forward collision activity and \(b/4\) is the backward
activity, both weighted by \(B\).  

To obtain an explicit realization, let
\(\widetilde{\mathcal N}_+\) and
\(\widetilde{\mathcal N}_-\) be independent compensated Poisson random
measures on
\([0,\infty)\times\mathscr C\times[0,\infty)\), each with intensity
\(\d t\,\lambda(\d c)\,\d u\).  At a frozen state \(f\), define 
\begin{align}\label{two-sided-Boltzmann-current}
 \langle\d\mathbb L_t^{\varepsilon,f},\xi\rangle_{\mathscr C}
 :={}&\varepsilon\int_{\mathscr C\times[0,\infty)}
 \xi(c)\boldsymbol1_{\{u\leq(4\varepsilon)^{-1}B(c)a(c)\}}
 \widetilde{\mathcal N}_+(\d t,\d c,\d u)\notag\\
 &-\varepsilon\int_{\mathscr C\times[0,\infty)}
 \xi(c)\boldsymbol1_{\{u\leq(4\varepsilon)^{-1}B(c)b(c)\}}
 \widetilde{\mathcal N}_-(\d t,\d c,\d u).
\end{align}
In the following, we will show that this is a suitable candidate of the noise by verifying its L\'evy cumulant.  If \(\widetilde{\mathcal N}\) is a compensated Poisson random
measure on
\([0,\infty)\times\mathscr C\times[0,\infty)\) with intensity
\(\d s\,\lambda(\d c)\,\d u\), then the Poisson exponential formula and its
compensated form \cite[Theorem~2.3.8(1) and Eq.~(2.9)]{ApplebaumLevy},
applied first on finite-intensity sets and then by approximation, yield,
whenever the right-hand side is finite,
\begin{align*}
 &\log\E\exp\left\{
 \int_t^{t+h}\int_{\mathscr C}\int_0^\infty
 g(s,c,u)\,\widetilde{\mathcal N}(\d s,\d c,\d u)
 \right\}\\
 &\qquad={}
 \int_t^{t+h}\int_{\mathscr C}\int_0^\infty
 \left(e^{g(s,c,u)}-1-g(s,c,u)\right)
 \,\d u\,\lambda(\d c)\,\d s.
\end{align*}
We write
\(
 \Delta_{t,h}\mathbb L^{\varepsilon,f}
 :=\mathbb L_{t+h}^{\varepsilon,f}-\mathbb L_t^{\varepsilon,f}
\).
In the exponential
\(\varepsilon^{-1}\langle
\Delta_{t,h}\mathbb L^{\varepsilon,f},\xi\rangle_{\mathscr C}\),
a forward jump contributes \(\xi(c)\), whereas a backward jump contributes
\(-\xi(c)\).  Independence of
\(\widetilde{\mathcal N}_+\) and
\(\widetilde{\mathcal N}_-\), followed by integration in the thinning
variable \(u\), therefore gives
\begin{align*}
 \log\E\exp\left\{
 \varepsilon^{-1}\left\langle
 \Delta_{t,h}\mathbb L^{\varepsilon,f},\xi
 \right\rangle_{\mathscr C}\right\} &={}
 \frac{h}{4\varepsilon}\int_{\mathscr C}B(c)a(c)
 \bigl(e^{\xi(c)}-1-\xi(c)\bigr)\,\lambda(\d c)\\
 &+
 \frac{h}{4\varepsilon}\int_{\mathscr C}B(c)b(c)
 \bigl(e^{-\xi(c)}-1+\xi(c)\bigr)\,\lambda(\d c).
\end{align*}
\begin{samepage}
Consequently,
\begin{align*}
 &\varepsilon\log\E\exp\left\{
 \varepsilon^{-1}\left\langle
 \Delta_{t,h}\mathbb L^{\varepsilon,f},\xi
 \right\rangle_{\mathscr C}\right\}\\
 &={}
 \frac h4\int_{\mathscr C}B(c)
 \left[a(c)\bigl(e^{\xi(c)}-1-\xi(c)\bigr)
 +b(c)\bigl(e^{-\xi(c)}-1+\xi(c)\bigr)\right]\lambda(\d c)\\
 &={}
 h\int_{\mathscr C\times\{-1,+1\}}
 \bigl(e^{z\xi(c)}-1-z\xi(c)\bigr)
 \nu_{f,\mathrm B}^{\rm tilt}(\d c,\d z)
 =h\mathcal C_{f,\mathrm B}(\xi).
\end{align*}
\end{samepage}
This implies exactly \eqref{conditional-Levy-current-definition}, with the
L\'evy measure \eqref{Boltzmann-Levy-measure} and cumulant
\eqref{Boltzmann-centered-cumulant}. After applying \(\mathscr D_{\mathrm B}^*\), a positive jump changes the
state by \(\varepsilon\mathcal J_c\), while a negative jump changes it by
\(-\varepsilon\mathcal J_c\), we next combine the two clocks in
\eqref{two-sided-Boltzmann-current} into one forward collision clock.  

Let \(\mathfrak r:\mathscr C\to\mathscr C\) be the pre--post-collisional
involution.  Away from the null set \(v=v_*\), it can be written as
\begin{align*}
 \mathfrak r(x,v,v_*,\sigma)
 :=\left(x,v',v_*',\frac{v-v_*}{|v-v_*|}\right).
\end{align*}
This reveals that the incoming velocities of the reversed channel are \((v',v_*')\),
and its outgoing velocities are \((v,v_*)\), and the value of \(\mathfrak r\) on \(v=v_*\) may be chosen arbitrarily.
 In the following, we will verify explicitly that the
geometric collision transformation preserves \(\lambda\).  Introduce the
center-of-mass and relative-velocity variables
\begin{align*}
 V:=\frac{v+v_*}{2},
 \qquad
 g:=v-v_*=\rho\vartheta,
 \qquad
 \rho:=|g|,
 \qquad
 \vartheta:=\frac{g}{|g|}\in\mathbb S^{d-1}.
\end{align*}
The absolute Jacobian of the linear change \((v,v_*)\leftrightarrow(V,g)\) is one,
and polar coordinates give
\begin{align}\label{collision-channel-measure-polar}
 \d v\,\d v_*
 =\d V\,\d g
 =\d V\,\rho^{d-1}\d\rho\,\d\vartheta.
\end{align}
In these coordinates,
\begin{align*}
 c
 =\left(x,V+\frac{\rho}{2}\vartheta,
             V-\frac{\rho}{2}\vartheta,\sigma\right),\qquad \mathfrak r c
 =\left(x,V+\frac{\rho}{2}\sigma,
             V-\frac{\rho}{2}\sigma,\vartheta\right).
\end{align*}
Thus \(\mathfrak r\) leaves \((x,V,\rho)\) unchanged and merely exchanges
the two angular variables \(\vartheta\) and \(\sigma\).  For every nonnegative
measurable \(\Phi:\mathscr C\to[0,\infty]\),
\begin{align}\label{collision-channel-measure-invariance-calculation}
\int_{\mathscr C}\Phi(\mathfrak r c)\,\lambda(\d c)&={}
 \int_{\R_x^d}\int_{\R_V^d}\int_0^\infty
 \int_{\mathbb S^{d-1}}\int_{\mathbb S^{d-1}}
 \Phi\left(x,V+\frac{\rho}{2}\sigma,
              V-\frac{\rho}{2}\sigma,\vartheta\right)
 \d\vartheta\,\d\sigma\,\rho^{d-1}\d\rho\,\d V\,\d x\notag\\
 &={}
 \int_{\R_x^d}\int_{\R_V^d}\int_0^\infty
 \int_{\mathbb S^{d-1}}\int_{\mathbb S^{d-1}}
 \Phi\left(x,V+\frac{\rho}{2}\vartheta,
              V-\frac{\rho}{2}\vartheta,\sigma\right)
 \d\vartheta\,\d\sigma\,\rho^{d-1}\d\rho\,\d V\,\d x=\int_{\mathscr C}\Phi(c)\,\lambda(\d c). 
\end{align}
This shows that 
\begin{align}\label{collision-channel-measure-invariance}
 \lambda\circ\mathfrak r^{-1}=\lambda.
\end{align}
The set \(g=0\), on which \(\vartheta\) is undefined, has
\(\d v\,\d v_*\)-measure zero and does not affect the calculation.
Microreversibility additionally gives \(B(\mathfrak r c)=B(c)\), Moreover, we have the following properties 
\begin{align*}
 \mathfrak r^2c&=c,&
 \lambda\circ\mathfrak r^{-1}&=\lambda,&
 B(\mathfrak r c)&=B(c),&
 a(\mathfrak r c)&=b(c),&
 b(\mathfrak r c)&=a(c),\notag\\
 \mathcal J_{\mathfrak r c}&=-\mathcal J_c,&
 \mathscr D_{\mathrm B}p(\mathfrak r c)&=-\mathscr D_{\mathrm B}p(c).
\end{align*}

\begin{samepage}
Apply \(\mathscr D_{\mathrm B}^*\) to
\eqref{two-sided-Boltzmann-current} and test the resulting state-space noise
against \(p\).  Its increment is
\begin{align*}
 \d M_t^{\varepsilon,p}
 ={}&\varepsilon\int_{\mathscr C\times[0,\infty)}
 \mathscr D_{\mathrm B}p(c)
 \boldsymbol1_{\{u\leq(4\varepsilon)^{-1}B(c)a(c)\}}
 \widetilde{\mathcal N}_+(\d t,\d c,\d u)\\
 &-\varepsilon\int_{\mathscr C\times[0,\infty)}
 \mathscr D_{\mathrm B}p(c)
 \boldsymbol1_{\{u\leq(4\varepsilon)^{-1}B(c)b(c)\}}
 \widetilde{\mathcal N}_-(\d t,\d c,\d u).
\end{align*}
\end{samepage}
In the following, we focus on the second integral.  Since the incoming
velocities of \(\mathfrak r c\) are \((v',v_*')\) and its outgoing velocities
are \((v,v_*)\), the definition \eqref{Boltzmann-channel-derivative} gives
the explicit calculation
\begin{align}\label{reversed-channel-derivative-calculation}
 \mathscr D_{\mathrm B}p(\mathfrak r c)
 &=p(x,v)+p(x,v_*)-p(x,v')-p(x,v_*')\notag\\
 &=-\bigl[p(x,v')+p(x,v_*')-p(x,v)-p(x,v_*)\bigr]=-\mathscr D_{\mathrm B}p(c).
\end{align}
Similarly, evaluating the backward activity at the reversed channel yields
\begin{align}\label{reversed-channel-activity-calculation}
 b(\mathfrak r c)
 &=f(x,v)f(x,v_*)=a(c),
 &B(\mathfrak r c)&=B(c).
\end{align}

To define the transformed Poisson measure precisely, set
\begin{align*}
 E:=[0,\infty)\times\mathscr C\times[0,\infty),
 \qquad
 \mu(\d s,\d c,\d u):=\d s\,\lambda(\d c)\,\d u,
\end{align*}
and define the measurable involution
\(\mathfrak R:E\to E\) by
\(\mathfrak R(s,c,u)=(s,\mathfrak r c,u)\).  For every measurable
\(A\subset E\) with \(\mu(A)<\infty\), define the image random measure and
its compensation by
\begin{align}\label{reversed-Poisson-measure-definition}
 \mathcal N_-^{\mathfrak r}(A)
 :=\mathcal N_-(\mathfrak R^{-1}A),
 \qquad
 \widetilde{\mathcal N}_-^{\mathfrak r}(A)
 :=\mathcal N_-^{\mathfrak r}(A)-\mu(A).
\end{align}
Because \(\mathfrak R\) changes only the channel variable, the preceding
invariance of \(\lambda\) implies, for every nonnegative measurable
\(H:E\to[0,\infty]\),
\begin{align}\label{Poisson-intensity-invariance-calculation}
 \int_EH\,\d(\mu\circ\mathfrak R^{-1})
 &=\int_0^\infty\int_{\mathscr C}\int_0^\infty
 H(s,\mathfrak r c,u)\,\d u\,\lambda(\d c)\,\d s\notag\\
 &=\int_0^\infty\int_{\mathscr C}\int_0^\infty
 H(s,c,u)\,\d u\,\lambda(\d c)\,\d s
 =\int_EH\,\d\mu.
\end{align}
Consequently,
\begin{align}\label{Poisson-intensity-invariance}
 \mu\circ\mathfrak R^{-1}=\mu.
\end{align}
This shows that
\(\mathcal N_-^{\mathfrak r}\) is a Poisson random measure with intensity
\(\mu\), and hence
\(\widetilde{\mathcal N}_-^{\mathfrak r}\) is a compensated Poisson random
measure with the same intensity as
\(\widetilde{\mathcal N}_-\).  Moreover, it remains independent of
\(\widetilde{\mathcal N}_+\), since it is a measurable function of
\(\mathcal N_-\).  From \eqref{reversed-Poisson-measure-definition},
for every admissible integrand \(G\),
\begin{align}\label{reversed-Poisson-change-of-variables}
 \int_EG(s,c,u)\,\widetilde{\mathcal N}_-(\d s,\d c,\d u)
 =\int_EG(s,\mathfrak r c,u)\,
 \widetilde{\mathcal N}_-^{\mathfrak r}(\d s,\d c,\d u).
\end{align}

Denote the second term in \(\d M_t^{\varepsilon,p}\) by
\begin{align*}
 I_-(\d t):={}&-\varepsilon
 \int_{\mathscr C\times[0,\infty)}
 \mathscr D_{\mathrm B}p(c)
 \boldsymbol1_{\{u\leq(4\varepsilon)^{-1}B(c)b(c)\}}
 \widetilde{\mathcal N}_-(\d t,\d c,\d u).
\end{align*}
Applying \eqref{reversed-Poisson-change-of-variables} with
\(c\mapsto\mathfrak r c\), and then using
\eqref{reversed-channel-derivative-calculation}--
\eqref{reversed-channel-activity-calculation}, gives, on the same probability
space,
\begin{align*}
 I_-(\d t)
 ={}&-\varepsilon
 \int_{\mathscr C\times[0,\infty)}
 \mathscr D_{\mathrm B}p(\mathfrak r c)
 \boldsymbol1_{\{u\leq(4\varepsilon)^{-1}
 B(\mathfrak r c)b(\mathfrak r c)\}}
 \widetilde{\mathcal N}_-^{\mathfrak r}(\d t,\d c,\d u)\\
 ={}&\varepsilon
 \int_{\mathscr C\times[0,\infty)}
 \mathscr D_{\mathrm B}p(c)
 \boldsymbol1_{\{u\leq(4\varepsilon)^{-1}B(c)a(c)\}}
 \widetilde{\mathcal N}_-^{\mathfrak r}(\d t,\d c,\d u).
\end{align*}
Thus the original minus sign is cancelled exactly by the antisymmetry of
the collision increment under pre--post reversal.  Adding the unchanged
first integral now yields the pathwise identity
\begin{align*}
 \d M_t^{\varepsilon,p}
 ={}&
 \varepsilon\int_{\mathscr C\times[0,\infty)}
 \mathscr D_{\mathrm B}p(c)
 \boldsymbol1_{\{u\leq(4\varepsilon)^{-1}B(c)a(c)\}}
 \bigl(\widetilde{\mathcal N}_+
       +\widetilde{\mathcal N}_-^{\mathfrak r}\bigr)
 (\d t,\d c,\d u)\\
 \stackrel{\mathrm{law}}={}&
 \varepsilon\int_{\mathscr C\times[0,\infty)}
 \mathscr D_{\mathrm B}p(c)
 \boldsymbol1_{\{u\leq(2\varepsilon)^{-1}B(c)a(c)\}}
 \widetilde{\mathcal N}(\d t,\d c,\d u).
\end{align*}
The final step is an equality in law.  Here
\(\mathcal N_++\mathcal N_-^{\mathfrak r}\) is a Poisson random measure with
intensity \(2\mu\), and
\begin{align*}
 \widetilde{\mathcal N}_+
 +\widetilde{\mathcal N}_-^{\mathfrak r}
 =\mathcal N_++\mathcal N_-^{\mathfrak r}-2\mu.
\end{align*}
Consequently, thinning this superposed measure at rate
\((4\varepsilon)^{-1}B(c)a(c)\) has the same law as thinning a Poisson random
measure of intensity \(\mu\) at the doubled rate
\((2\varepsilon)^{-1}B(c)a(c)\).  Since the equality holds jointly for every
finite collection of state-space test functions \(p\), it identifies the
corresponding distribution-valued noises, then the master equation takes only the forward form
\begin{align}\label{unified-gives-Boltzmann}
 \d F_t^\varepsilon+v\cdot\nabla_xF_t^\varepsilon\,\d t
 ={}&Q_{\mathrm B}(F_t^\varepsilon,F_t^\varepsilon)\,\d t\notag\\
 &+\varepsilon\int_{\mathscr C\times[0,\infty)}
 \mathcal J_c
 \boldsymbol1_{\{u\leq(2\varepsilon)^{-1}
 B(v-v_*,\sigma)F_{t-}^\varepsilon(x,v)
 F_{t-}^\varepsilon(x,v_*)\}}
 \widetilde{\mathcal N}(\d t,\d c,\d u).
\end{align}
This is the
forward-collision representation of the Poissonian stochastic Boltzmann
equation introduced in \cite{WuPoissonBoltzmann}.

\medskip
\noindent\textbf{The kinetic \(\alpha\)-stable Dean--Kawasaki equation.}

A fractional velocity operator gives a linear approximation of the
Boltzmann collision mechanism.  Such operators arise, for example, as
anomalous diffusion limits of linear Boltzmann equations with heavy-tailed
equilibria \cite{MelletMischlerMouhot2011}.  This leads to a kinetic model
in which particles move freely in position and their velocities follow
independent isotropic symmetric \(\alpha\)-stable processes, with
\(\alpha\in(0,2)\).

Set \(\R_*^d:=\R^d\setminus\{0\}\) and
\begin{align*}
 \nu_\alpha(\d z):=c_{d,\alpha}|z|^{-d-\alpha}\,\d z.
\end{align*}
The normalization constant \(c_{d,\alpha}>0\) is chosen so that the
velocity generator of a symmetric \(\alpha\)-stable process is
\begin{align}\label{stable-generator-scope}
 \mathcal L_\alpha\phi(v)
 :=\int_{\R_*^d}\left[
 \phi(v+z)-\phi(v)
 -\boldsymbol1_{\{|z|\leq1\}}z\cdot\nabla_v\phi(v)
 \right]\nu_\alpha(\d z)=-(-\Delta_v)^{\alpha/2}\phi(v).
\end{align}
Since \(\nu_\alpha\) is symmetric,
\(\mathcal L_\alpha^*=\mathcal L_\alpha\).  Let
\(\mathfrak N(\d t,\d x,\d v,\d z,\d u)\) be a Poisson random measure with
intensity
\(\d t\,\d x\,\d v\,\nu_\alpha(\d z)\,\d u\), and let
\(\widetilde{\mathfrak N}\) denote its compensation.  The formal kinetic
\(\alpha\)-stable Dean--Kawasaki equation is
\begin{align}\label{alpha-stable-Dean-Kawasaki-scope}
 \d F_t^\varepsilon
 +v\cdot\nabla_xF_t^\varepsilon\,\d t
 ={}&\mathcal L_\alpha^*F_t^\varepsilon\,\d t\notag\\
 &+\varepsilon
 \int_{\R_x^d\times\R_v^d\times\R_*^d\times(0,\infty)}
 \bigl(\delta_{(x,v+z)}-\delta_{(x,v)}\bigr)
 \boldsymbol1_{\{u\leq F_{t-}^\varepsilon(x,v)/\varepsilon\}}
 \widetilde{\mathfrak N}(\d t,\d x,\d v,\d z,\d u).
\end{align}
One jump transfers mass \(\varepsilon\) from \((x,v)\) to
\((x,v+z)\), at rate
\(\varepsilon^{-1}F_{t-}^\varepsilon(x,v)\nu_\alpha(\d z)\).

This model is also covered by the structure of the master equation.  Its
channel space, reference measure, and channel derivative can be defined by 
\begin{align}\label{stable-channel-structure}
 \mathscr C_\alpha&:=\R_x^d\times\R_v^d\times\R_*^d,\qquad
 \lambda_\alpha(\d x,\d v,\d z)
 :=\d x\,\d v\,\nu_\alpha(\d z),\notag\\
 \mathscr D_\alpha p(x,v,z)&:=p(x,v+z)-p(x,v).
\end{align}
Since the stable L\'evy measure has infinite mass near \(z=0\), we use the
weighted channel space
\begin{align}\label{stable-channel-space}
 X_\alpha
 :=\left\{\xi\in L^0(\mathscr C_\alpha):
 \|\xi\|_{X_\alpha}
 :=\mathop{\operatorname{ess\,sup}}_{(x,v,z)\in\mathscr C_\alpha}
 \frac{|\xi(x,v,z)|}{1\wedge|z|}<\infty\right\}.
\end{align}
This is a Banach space that does not depend on the state.  In particular,
if \(p\) and \(\nabla_vp\) are bounded, then
\begin{align*}
 |\mathscr D_\alpha p(x,v,z)|
 \leq \max\{\|\nabla_vp\|_\infty,2\|p\|_\infty\}
 (1\wedge|z|),
\end{align*}
and hence \(\mathscr D_\alpha p\in X_\alpha\).  The same weight is
compatible with the small-jump integrability
\begin{align*}
 \int_{\R_*^d}(1\wedge|z|^2)\,\nu_\alpha(\d z)<\infty.
\end{align*}
For a strictly positive density \(\rho\), define
\begin{align}\label{stable-entropy-dissipation-pair}
 \mathcal E_\alpha(\rho)
 :={}&\int_{\R_x^d\times\R_v^d}
 \rho(x,v)\log\rho(x,v)\,\d x\,\d v,\notag\\
 \Psi_{\rho,\alpha}^*(\xi)
 :={}&\int_{\mathscr C_\alpha}
 \sqrt{\rho(x,v)\rho(x,v+z)}
 \bigl(\cosh\xi(x,v,z)-1\bigr)\,
 \lambda_\alpha(\d x,\d v,\d z).
\end{align}

For arbitrary scalar channel fields \(\eta\) and \(\xi\), differentiation
under the integral gives
\begin{align}\label{stable-dual-dissipation-derivative}
 D\Psi_{\rho,\alpha}^*(\eta)[\xi]
 :=
 \left.\frac{\d}{\d s}\right|_{s=0}
 \Psi_{\rho,\alpha}^*(\eta+s\xi)=
 \int_{\mathscr C_\alpha}
 \sqrt{\rho(x,v)\rho(x,v+z)}\,
 \sinh\eta(x,v,z)\,\xi(x,v,z)\,
 \lambda_\alpha(\d x,\d v,\d z).
\end{align}
Thus the channel-current density represented by this derivative is
\begin{align*}
 D\Psi_{\rho,\alpha}^*(\eta)(x,v,z)
 =\sqrt{\rho(x,v)\rho(x,v+z)}\,\sinh\eta(x,v,z).
\end{align*}
Let \(\Psi_{\rho,\alpha}\) be the Legendre transform of
\(\Psi_{\rho,\alpha}^*\).  More explicitly, for a current \(j\),
\begin{align*}
 \Psi_{\rho,\alpha}(j)
 =\int_{\mathscr C_\alpha}\Bigg[
 &j(x,v,z)\operatorname{arsinh}
 \left(\frac{j(x,v,z)}{\sqrt{\rho(x,v)\rho(x,v+z)}}\right)\\
 &-\sqrt{j(x,v,z)^2+\rho(x,v)\rho(x,v+z)}
 +\sqrt{\rho(x,v)\rho(x,v+z)}
 \Bigg]\,\lambda_\alpha(\d x,\d v,\d z).
\end{align*}
This follows from the scalar cosh--arsinh Legendre pair in
\cite[Eqs.~(1.2)--(1.3)]{PeletierSchlichtingCosh}, applied pointwise with
activity \(\sqrt{\rho(x,v)\rho(x,v+z)}\).

The entropy force on a channel is
\(\left(-\frac12\mathscr D_\alpha
 \frac{\delta\mathcal E_\alpha}{\delta\rho}\right)(x,v,z)
 =\frac12\log\frac{\rho(x,v)}{\rho(x,v+z)}\). Consequently,
\begin{align*}
 D\Psi_{\rho,\alpha}^*
 \left(-\frac12\mathscr D_\alpha
 \frac{\delta\mathcal E_\alpha}{\delta\rho}\right)(x,v,z)
 =\sqrt{\rho(x,v)\rho(x,v+z)}
 \sinh\left(\frac12\log\frac{\rho(x,v)}{\rho(x,v+z)}\right)=\frac12\bigl(\rho(x,v)-\rho(x,v+z)\bigr).
\end{align*}
We compute its image under \(\mathscr D_\alpha^*\) first with the jump
integral restricted to \(|z|>r\), where \(r\in(0,1)\).  For every test
function \(p\),
\begin{align*}
 &\frac12\int_{\R^d\times\{|z|>r\}}
 \bigl(\rho(x,v)-\rho(x,v+z)\bigr)
 \bigl(p(x,v+z)-p(x,v)\bigr)\,
 \d v\,\nu_\alpha(\d z)\\
 =&
 \frac12\int_{\R^d\times\{|z|>r\}}
 \rho(x,v)\bigl(p(x,v+z)-p(x,v)\bigr)\,
 \d v\,\nu_\alpha(\d z)\\
 &+
 \frac12\int_{\R^d\times\{|z|>r\}}
 \rho(x,\bar v)\bigl(p(x,\bar v+\bar z)-p(x,\bar v)\bigr)\,
 \d\bar v\,\nu_\alpha(\d\bar z)\\
 =&
 \int_{\R^d\times\{|z|>r\}}
 \rho(x,v)\bigl(p(x,v+z)-p(x,v)\bigr)\,
 \d v\,\nu_\alpha(\d z).
\end{align*}
In the second term we used
\(\bar v=v+z\) and \(\bar z=-z\).  Under this change,
\(\rho(x,v+z)=\rho(x,\bar v)\), while
\[
 p(x,v+z)-p(x,v)
 =p(x,\bar v)-p(x,\bar v+\bar z)
 =-\bigl(p(x,\bar v+\bar z)-p(x,\bar v)\bigr).
\]
The minus sign cancels the minus sign in front of the
\(\rho(x,v+z)\)-term, and the measure is unchanged because
\(\nu_\alpha\) is symmetric.

Symmetry cancels the compensation term in
\eqref{stable-generator-scope}, since
\(\int_{\{r<|z|\leq1\}}z\,\nu_\alpha(\d z)=0\).  Hence
\begin{align*}
 &\int_{\R^d\times\{|z|>r\}}
 \rho(x,v)\bigl(p(x,v+z)-p(x,v)\bigr)\,
 \d v\,\nu_\alpha(\d z)\\
 &={}
 \int_{\R^d}\rho(x,v)\int_{\{|z|>r\}}
 \left[
 p(x,v+z)-p(x,v)
 -\boldsymbol1_{\{|z|\leq1\}}z\cdot\nabla_vp(x,v)
 \right]\nu_\alpha(\d z)\,\d v.
\end{align*}
Letting \(r\downarrow0\) and using
\eqref{stable-generator-scope}, we obtain
\begin{align*}
 \frac12\int_{\mathscr C_\alpha}
 \bigl(\rho(x,v)-\rho(x,v+z)\bigr)
 \mathscr D_\alpha p(x,v,z)\,\lambda_\alpha(\d x,\d v,\d z)
 =\int_{\R_x^d\times\R_v^d}
 \rho(x,v)(\mathcal L_\alpha p)(x,v)\,\d x\,\d v.
\end{align*}
This equation identifies the dissipative drift with
\(\mathcal L_\alpha^*\rho\).  Together with the kinetic transport
\(\mathcal T(\rho)=v\cdot\nabla_x\rho\), this gives the deterministic part of
\eqref{alpha-stable-Dean-Kawasaki-scope}.

We next identify the noise.  Set
\(\eta(x,v,z)=\frac12\log(\rho(x,v)/\rho(x,v+z))\).  For an arbitrary scalar
channel field \(\xi\), the pointwise identity
\begin{align*}
 \sqrt{\rho(x,v)\rho(x,v+z)}
 \bigl[\cosh(\eta+\xi)-\cosh\eta-\sinh(\eta)\,\xi\bigr]=
 \frac{\rho(x,v)}2\bigl(e^\xi-1-\xi\bigr)
 +\frac{\rho(x,v+z)}2\bigl(e^{-\xi}-1+\xi\bigr).
\end{align*}
Consequently, the centered Bregman remainder is
\begin{align}\label{stable-centered-cumulant-scope}
 &\Psi_{\rho,\alpha}^*\left(
 -\frac12\mathscr D_\alpha
 \frac{\delta\mathcal E_\alpha}{\delta\rho}+\xi\right)
 -\Psi_{\rho,\alpha}^*\left(
 -\frac12\mathscr D_\alpha
 \frac{\delta\mathcal E_\alpha}{\delta\rho}\right)
 -D\Psi_{\rho,\alpha}^*\left(
 -\frac12\mathscr D_\alpha
 \frac{\delta\mathcal E_\alpha}{\delta\rho}\right)[\xi]\notag\\
 &={}
 \frac12\int_{\mathscr C_\alpha}
 \rho(x,v)\bigl(e^{\xi(x,v,z)}-1-\xi(x,v,z)\bigr)\,
 \lambda_\alpha(\d x,\d v,\d z)\notag\\
 &\quad{}+\frac12\int_{\mathscr C_\alpha}
 \rho(x,v+z)\bigl(e^{-\xi(x,v,z)}-1+\xi(x,v,z)\bigr)\,
 \lambda_\alpha(\d x,\d v,\d z).
\end{align}
This L\'evy--Khintchine exponent can first be realized by two independent
compensated Poisson currents.  Let
\(\widetilde{\mathfrak N}_+\) and
\(\widetilde{\mathfrak N}_-\) have the common reference intensity
\(\d t\,\d x\,\d v\,\nu_\alpha(\d z)\,\d u\).  Conditional on a frozen state \(\rho\), set
\begin{align}\label{stable-two-sided-current-scope}
 \left\langle\d\mathbb L_t^{\varepsilon,\rho},\xi\right\rangle
 :={}&\varepsilon\int_{\mathscr C_\alpha\times(0,\infty)}
 \xi(x,v,z)\boldsymbol1_{\{u\leq\rho(x,v)/(2\varepsilon)\}}
 \widetilde{\mathfrak N}_+(\d t,\d x,\d v,\d z,\d u)\notag\\
 &-\varepsilon\int_{\mathscr C_\alpha\times(0,\infty)}
 \xi(x,v,z)\boldsymbol1_{\{u\leq\rho(x,v+z)/(2\varepsilon)\}}
 \widetilde{\mathfrak N}_-(\d t,\d x,\d v,\d z,\d u).
\end{align}
The two terms have channel jumps \(+1\) and \(-1\), respectively, and
their cumulant is precisely \eqref{stable-centered-cumulant-scope}.

As in the reduction
\eqref{two-sided-Boltzmann-current}--\eqref{unified-gives-Boltzmann}, the
two channel noises combine after applying \(\mathscr D_\alpha^*\).
Testing a test function \(p\) gives
\begin{align*}
 \left\langle\mathscr D_\alpha^*\d\mathbb L_t^{\varepsilon,\rho},p\right\rangle
 ={}&\varepsilon\int_{\mathscr C_\alpha\times(0,\infty)}
 \mathscr D_\alpha p(x,v,z)
 \boldsymbol1_{\{u\leq\rho(x,v)/(2\varepsilon)\}}
 \widetilde{\mathfrak N}_+(\d t,\d x,\d v,\d z,\d u)\\
 &-\varepsilon\int_{\mathscr C_\alpha\times(0,\infty)}
 \mathscr D_\alpha p(x,v,z)
 \boldsymbol1_{\{u\leq\rho(x,v+z)/(2\varepsilon)\}}
 \widetilde{\mathfrak N}_-(\d t,\d x,\d v,\d z,\d u).
\end{align*}
In the last integral, we have used
\(\bar v=v+z\) and \(\bar z=-z\).  Furthermore, we have 
\begin{align*}
 -\mathscr D_\alpha p(x,v,z)
 &=p(x,v)-p(x,v+z)
 =p(x,\bar v+\bar z)-p(x,\bar v)
 =\mathscr D_\alpha p(x,\bar v,\bar z),\\
 \rho(x,v+z)&=\rho(x,\bar v),\qquad
 \d v\,\nu_\alpha(\d z)
 =\d\bar v\,\nu_\alpha(\d\bar z).
\end{align*}
The image of \(\widetilde{\mathfrak N}_-\) under this transformation is
therefore a compensated Poisson random measure with the same reference
intensity as \(\widetilde{\mathfrak N}_+\), and it remains independent of
\(\widetilde{\mathfrak N}_+\).  Consequently, it follows that 
\begin{align*}
 \left\langle\mathscr D_\alpha^*\d\mathbb L_t^{\varepsilon,\rho},p\right\rangle
 \stackrel{\mathrm{law}}{={}}&
 \varepsilon\int_{\mathscr C_\alpha\times(0,\infty)}
 \mathscr D_\alpha p(x,v,z)
 \boldsymbol1_{\{u\leq\rho(x,v)/\varepsilon\}}
 \widetilde{\mathfrak N}(\d t,\d x,\d v,\d z,\d u).
\end{align*}
Thus the two-sided realization \eqref{stable-two-sided-current-scope} is
equal in law to the single noise in
\eqref{alpha-stable-Dean-Kawasaki-scope} on state-space observables, and then the kinetic $\alpha$-stable Dean-Kawasaki equation \eqref{alpha-stable-Dean-Kawasaki-scope} can be derived from the master fluctuating kinetic equation.

\section{L\'evy realization of dissipation potentials}
\label{sec:Levy-realizability}

In this section, we turn from the formal structure of Section~2 to the first rigorous result.
The state \(f\), and hence the channel force
\(-\frac12\mathscr D\frac{\delta\mathcal E}{\delta f}\), is frozen
throughout this section, so the question reduces to a representation problem
for functionals on channel fields.  At a frozen state \(f\), a centered local L\'evy current has two
characteristics: a Gaussian covariance density \(A_f(c)\) and a jump
intensity \(\nu_f(\d c,\d z)\), where \(z\in\R^m\) is the amplitude of one
channel event.  Its log moment-generating functional has the form
\begin{align*}
 \frac12\int_{\mathscr C}\xi\cdot A_f\xi\,\lambda(\d c)
 +\int_{\mathscr C\times(\R^m\setminus\{0\})}
 \bigl(e^{z\cdot\xi}-1-z\cdot\xi\bigr)\,\nu_f(\d c,\d z).
\end{align*}

We now impose the integrability needed for the
cumulant, its force shift, and the derivatives used below. Recall that we use the Banach channel space \(X\) and its dual \(X^*\) from
\eqref{Banach-channel-space}, together with the operators
\(\mathscr D,\mathscr D^*\) defined in
\eqref{channel-derivative-domain} and \eqref{channel-adjoint-domain}, and
related by \eqref{channel-adjoint-pairing}.
The dual dissipation potential and its effective domain are given in
\eqref{dual-dissipation-potential-domain}, and the frozen entropy force
\(\eta_f\in X\) is defined in \eqref{entropy-force-convention}.  

\begin{definition}[Admissible L\'evy channel data]
\label{def:symmetric-Levy-data}
Fix \(f\in L_+^1(\mathsf Z)\) such that the entropy force
\begin{align*}
 \eta_f:=-\frac12\mathscr D
 \frac{\delta\mathcal E}{\delta f}(f)
\end{align*}
is well defined and belongs to \(X\).  Admissible L\'evy channel data at
the frozen state \(f\) consist of
\begin{enumerate}
\item a measurable matrix field
      \(A_f:\mathscr C\to\R^{m\times m}\) such that, for every
      \(c\in\mathscr C\), the matrix \(A_f(c)\) is symmetric and
      nonnegative;
\item a measure \(\nu_f(\d c,\d z)\) on
      \(\mathscr C\times(\R^m\setminus\{0\})\) whose channel marginal
      does not charge \(\lambda\)-null sets, in the sense that
      \(\lambda(K)=0\) implies
      \(\nu_f(K\times(\R^m\setminus\{0\}))=0\).
\end{enumerate}
We assume that \(A_f\) is locally integrable and that, for every measurable
\(K\subset\mathscr C\) with \(\lambda(K)<\infty\),
\begin{align}\label{local-Levy-integrability}
 \int_{K\times(\R^m\setminus\{0\})}(1\wedge|z|^2)
 \,\nu_f(\d c,\d z)<\infty.
\end{align}
In addition, for every \(\xi\in X\), assume
\begin{align}\label{Levy-exponential-integrability}
 &\int_{\mathscr C}\xi(c)\cdot A_f(c)\xi(c)\,\lambda(\d c)<\infty,\notag\\
 &\int_{\mathscr C\times(\R^m\setminus\{0\})}
 |z\cdot\xi(c)|^2
 \exp\left\{|z\cdot\eta_f(c)|+|z\cdot\xi(c)|\right\}
 \,\nu_f(\d c,\d z)<\infty.
\end{align}
The data are called \emph{symmetric} if \(\nu_f\) is invariant under
reflection of the jump variable, more precisely, for every measurable
\(B\subset\mathscr C\times(\R^m\setminus\{0\})\),
\begin{align*}
 \nu_f(B)=\nu_f\bigl(\{(c,-z):(c,z)\in B\}\bigr).
\end{align*}
\end{definition}

The central question of this section is when the centered functional \(\mathcal C_f\) determined by
\(\Psi_f^*\) has a L\'evy--Khintchine representation.  Starting from
symmetric admissible data \((A_f,\nu_f)\), the entropy force produces the
jump intensity in that representation by an exponential tilt of \(\nu_f\),
as defined later in \eqref{Hessian-criterion-tilt}.
Condition~\eqref{Levy-exponential-integrability} guarantees that the
integrals in this tilted representation, together with their derivatives in
every direction \(\xi\in X\), are finite.  This convenient condition allows
us to use the tilt under a single set of moment assumptions later on.

We now give a condition directly on the dissipation potential.  For a
symmetric matrix-valued function
\(H:\R^m\to\R^{m\times m}\), we say that \(H\) is
\emph{matrix-exponentially convex} if
\begin{align}\label{matrix-exponential-convexity}
 \sum_{i,j=1}^N u_i\cdot H(r_i+r_j)u_j\geq0
\end{align}
for every \(N\in\mathbb N\) and every
\(r_1,\ldots,r_N,u_1,\ldots,u_N\in\R^m\).  The purpose of this condition is
to recognize when a Hessian can come from a L\'evy--Khintchine exponent.
The one-dimensional scalar origin of the representation is the classical
Bernstein--Widder theorem; see
\cite[Chapter~VI, Section~21]{WidderLaplace}.  We use its multidimensional
scalar extension due to Shucker
\cite[Definition~4, Theorem~4, and the corollary following it]{ShuckerWidder}.
Applying this result to \(u\cdot H(\cdot)u\) for each \(u\in\R^m\), and then
using polarization, gives the required matrix-valued measure.  The
L\'evy--Khintchine representation can then be constructed from this Laplace
transform.
When \(H\) is a
Hessian, this representation takes the more specific form
\begin{align*}
 H(r)=A+\int_{\R^m\setminus\{0\}}
 (z\otimes z)e^{z\cdot r}\,\nu(\d z),
\end{align*}
where \(A\) is symmetric and nonnegative and \(\nu\) is a positive
measure.  Integrating this identity twice in \(r\) gives a quadratic
Gaussian term and an exponential jump term, and these are exactly the two
parts of a L\'evy--Khintchine representation.  Therefore
\eqref{matrix-exponential-convexity} provides a sufficient condition for
the L\'evy realization constructed below.  

\begin{assumption}
\label{ass:Hessian-Levy-sufficient-condition}
Fix a strictly positive kinetic density
\begin{align}\label{Hessian-criterion-state-class}
 f\in L_+^1(\mathsf Z)
 :=\{g\in L^1(\mathsf Z):g\geq0\ \text{a.e.}\},
 \qquad f>0\quad\text{a.e.},
\end{align}
such that \(\mathcal E(f)\in\R\), the first variation
\(\delta\mathcal E/\delta f(f)\) is represented by a measurable function,
and
\begin{align}\label{Hessian-criterion-force}
 \eta_f:=-\frac12\mathscr D
 \frac{\delta\mathcal E}{\delta f}(f)\in X.
\end{align}
Suppose that the dual dissipation potential has the following form:
\begin{align}\label{local-integral-dissipation-potential}
 \Psi_f^*(\xi)
 =\int_{\mathscr C}\psi_f^*(c,\xi(c))\,\lambda(\d c),
 \qquad \xi\in X,
\end{align}
where \(\psi_f^*(c,r)\) is measurable in \(c\) and twice continuously
differentiable in \(r\in\R^m\).  For \(\lambda\)-almost every \(c\) and
every \(r\in\R^m\), assume
\begin{align*}
 \psi_f^*(c,-r)=\psi_f^*(c,r),
 \qquad
 \psi_f^*(c,0)=0,
 \qquad D_r\psi_f^*(c,0)=0
\end{align*}
and set
\begin{align*}
 H_f(c,r):=D_r^2\psi_f^*(c,r).
\end{align*}
Assume that \(r\mapsto H_f(c,r)\) is matrix-exponentially convex for
\(\lambda\)-almost every \(c\), and that \(\Psi_f^*\) is twice continuously
G\^ateaux differentiable on \(X\), with
\begin{align}\label{integrated-Hessian-assumption}
 D^2\Psi_f^*(\rho)[\xi,\zeta]
 =\int_{\mathscr C}
 \xi(c)\cdot H_f(c,\rho(c))\zeta(c)\,\lambda(\d c)
\end{align}
defining a finite continuous bilinear form for every
\(\rho,\xi,\zeta\in X\).
Finally, assume that the Hessian at the origin is locally integrable:
\begin{align}\label{local-Hessian-trace-integrability}
 \int_K\operatorname{tr}H_f(c,0)\,\lambda(\d c)<\infty
\end{align}
for every measurable \(K\subset\mathscr C\) with \(\lambda(K)<\infty\).
\end{assumption}

\begin{theorem}\label{thm:Hessian-Levy-sufficient-condition}
Suppose that Assumption~\ref{ass:Hessian-Levy-sufficient-condition} holds.
Then there is a measurable matrix field
\(A_f:\mathscr C\to\R^{m\times m}\) that is symmetric and nonnegative-definite: for every
\(c\in\mathscr C\) and \(u\in\R^m\),
\begin{align*}
 A_f(c)^{\mathsf T}=A_f(c),
 \qquad u\cdot A_f(c)u\geq0, 
\end{align*}
and a measure \(\nu_f\) on
\(\mathscr C\times(\R^m\setminus\{0\})\) that is symmetric in its jump
variable: for every measurable
\(B\subset\mathscr C\times(\R^m\setminus\{0\})\),
\begin{align*}
 \nu_f(B)=\nu_f\bigl(\{(c,-z):(c,z)\in B\}\bigr), 
\end{align*}
such that, 
\begin{align}\label{Hessian-Levy-representation-Psi}
 \Psi_f^*(\xi)
 ={}&\frac12\int_{\mathscr C}
 \xi(c)\cdot A_f(c)\xi(c)\,\lambda(\d c)\notag\\
 &+\int_{\mathscr C\times(\R^m\setminus\{0\})}
 \bigl(e^{z\cdot\xi(c)}-1-z\cdot\xi(c)\bigr)
 \,\nu_f(\d c,\d z).
\end{align}
Furthermore, these data can be chosen to satisfy the local L\'evy integrability and exponential-moment
conditions of Definition~\ref{def:symmetric-Levy-data}.  In particular,
the Bregman remainder of \(\Psi_f^*\) at the entropy force
\eqref{Hessian-criterion-force} has the L\'evy--Khintchine representation
\begin{align}\label{Hessian-criterion-centered-LK}
 &\Psi_f^*\left(-\frac12\mathscr D
 \frac{\delta\mathcal E}{\delta f}(f)+\xi\right)
 -\Psi_f^*\left(-\frac12\mathscr D
 \frac{\delta\mathcal E}{\delta f}(f)\right)
 -D\Psi_f^*\left(-\frac12\mathscr D
 \frac{\delta\mathcal E}{\delta f}(f)\right)[\xi]\notag\\
 &\qquad={}
 \frac12\int_{\mathscr C}
 \xi(c)\cdot A_f(c)\xi(c)\,\lambda(\d c)+\int_{\mathscr C\times(\R^m\setminus\{0\})}
 \bigl(e^{z\cdot\xi(c)}-1-z\cdot\xi(c)\bigr)
 \,\nu_f^{\rm tilt}(\d c,\d z),
 \qquad \xi\in X,
\end{align}
where
\begin{align}\label{Hessian-criterion-tilt}
 \nu_f^{\rm tilt}(\d c,\d z)
 :=e^{z\cdot\eta_f(c)}\nu_f(\d c,\d z)
 =\exp\left\{-\frac12z\cdot
 \left(\mathscr D\frac{\delta\mathcal E}{\delta f}(f)\right)(c)
 \right\}\nu_f(\d c,\d z).
\end{align}
\end{theorem}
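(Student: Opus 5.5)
The plan is to work channelwise: first represent each Hessian \(H_f(c,\cdot)\) as a Laplace transform, then integrate twice in \(r\) to recover \(\psi_f^*(c,\cdot)\), and finally integrate over \(\mathscr C\). At the start I fix \(c\) outside a \(\lambda\)-null set where matrix-exponential convexity holds. For each \(u\in\R^m\) the scalar function \(r\mapsto u\cdot H_f(c,r)u\) is continuous and exponentially convex on \(\R^m\). Shucker's multidimensional Bernstein--Widder theorem \cite{ShuckerWidder} then gives a finite positive measure \(\mu_{c,u}\) on \(\R^m\) such that \(u\cdot H_f(c,r)u=\int e^{z\cdot r}\mu_{c,u}(\d z)\). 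By polarization I obtain a matrix-valued measure \(M_c\) with \(H_f(c,r)=\int e^{z\cdot r}M_c(\d z)\), and \(M_c\) is nonnegative as a matrix measure. I then split off the atom at the origin, \(A_f(c):=M_c(\{0\})\). On \(\R^m\setminus\{0\}\), I need the Hessian structure to force \(M_c(\d z)=(z\otimes z)\nu_c(\d z)\). For this I will use the fact that \(H_f(c,\cdot)\) is a Hessian, so \(\partial_{r_k}H_{ij}\) is symmetric in \(i,j,k\). By uniqueness of Laplace transforms this gives \(z_kM_{ij}(\d z)=z_iM_{kj}(\d z)\) as measures. That relation forces \(M\) to be supported on rank-one directions \(z\otimes z\) away from the origin, and I define \(\nu_c(\d z):=|z|^{-4}\,z\cdot M_c(\d z)z\). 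Evenness of \(\psi_f^*(c,\cdot)\) makes \(H_f(c,\cdot)\) even, so uniqueness again shows that \(M_c\), and hence \(\nu_c\), is reflection symmetric.

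Next I integrate the Hessian twice. The normalizations \(\psi_f^*(c,0)=0\) and \(D_r\psi_f^*(c,0)=0\) give
\[
 \psi_f^*(c,r)=\int_0^1(1-t)\,r\cdot H_f(c,tr)r\,\d t .
\]
Substituting the Laplace representation and using Tonelli, which is legitimate because the integrand \((z\cdot r)^2e^{tz\cdot r}\) is nonnegative, yields
\[
 \psi_f^*(c,r)=\tfrac12 r\cdot A_f(c)r+\int\bigl(e^{z\cdot r}-1-z\cdot r\bigr)\nu_c(\d z).
\]
Setting \(\nu_f(\d c,\d z):=\nu_c(\d z)\lambda(\d c)\) and integrating over \(c\) gives \eqref{Hessian-Levy-representation-Psi}.

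Two pieces of measurability have to be handled here: \(c\mapsto A_f(c)\) and the kernel \(c\mapsto\nu_c\). I plan to obtain both by writing \(\nu_c\) and \(A_f(c)\) as limits of countably many measurable quantities, namely finite differences of \(H_f(c,\cdot)\) at rational points. The Laplace transforms determine the measures uniquely, and the fact that \(\mathscr C\) is a standard Borel space then yields a measurable kernel. Local integrability \eqref{local-Levy-integrability} should follow from \eqref{local-Hessian-trace-integrability}, because \(\operatorname{tr}H_f(c,0)=\operatorname{tr}A_f(c)+\int|z|^2\nu_c(\d z)\). This bounds \(\int(1\wedge|z|^2)\nu_c(\d z)\) and the trace of \(A_f\) locally.

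The exponential-moment condition \eqref{Levy-exponential-integrability} I expect to extract from finiteness of \(D^2\Psi_f^*(\rho)[\xi,\xi]\) for \(\rho\in X\). This quantity equals \(\int\xi\cdot A_f\xi+\iint(z\cdot\xi)^2e^{z\cdot\rho}\nu_f\). Choosing \(\rho=\pm\eta_f\pm\xi\), and using the reflection symmetry together with \(e^{|a|+|b|}\le\sum_{\pm,\pm}e^{\pm a\pm b}\), bounds the required integral.

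For the tilted representation I compute directly. With \(\eta=\eta_f(c)\),
\[
 e^{z\cdot(\eta+\xi)}-e^{z\cdot\eta}-z\cdot\xi\,e^{z\cdot\eta}=e^{z\cdot\eta}\bigl(e^{z\cdot\xi}-1-z\cdot\xi\bigr),
\]
while the Gaussian terms reduce to \(\tfrac12\xi\cdot A_f\xi\). What remains is to identify \(D\Psi_f^*(\eta_f)[\xi]\) with the integral of the derivative. This requires differentiating under the integral sign, and the exponential moments just established dominate the difference quotients, so this step is justified. Together these give \eqref{Hessian-criterion-centered-LK} with \(\nu_f^{\rm tilt}=e^{z\cdot\eta_f}\nu_f\).

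The main obstacle, I expect, is the rank-one identification \(M_c=(z\otimes z)\nu_c\) together with the measurable selection in \(c\). My first attempt at the rank-one step would be the symmetry \(z_kM_{ij}=z_iM_{kj}\). If that turns out to be delicate, I would instead integrate \(H_f(c,\cdot)\) once to obtain \(D_r\psi_f^*\) and compare Laplace transforms directly.
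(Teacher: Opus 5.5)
Your proposal is correct and follows essentially the paper's route. The shared steps are Shucker's theorem for $u\cdot H_f(c,\cdot)u$, polarization, taking the atom at the origin as $A_f(c)$, and the third-derivative symmetry $z_kM_{ij}=z_iM_{kj}$ to force the rank-one form; your normalization $|z|^{-4}z\cdot M_{f,c}z$ agrees with the paper's $|z|^{-2}\operatorname{tr}M_{f,c}$ on that form. After that come Taylor's formula with Tonelli, the trace at $r=0$ for local integrability, and $D^2\Psi_f^*(\pm\eta_f\pm\xi)[\xi,\xi]$ for the exponential moments. The one difference is the tilted step: the paper applies the integral Taylor formula to $\Psi_f^*$ at $\eta_f$ using \eqref{integrated-Hessian-assumption} directly, whereas you subtract the two representations and differentiate under the integral sign; both are valid.
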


\begin{proof}
For \(\lambda\)-almost every \(c\in\mathscr C\), the function
\(r\mapsto H_f(c,r)\) is continuous and satisfies
\eqref{matrix-exponential-convexity}.  Fix \(u\in\R^m\) and write
\(h_u(r):=u\cdot H_f(c,r)u\).  For
\(s_1,\ldots,s_N\in\R^m\) and
\(a_1,\ldots,a_N\in\R\), take \(r_i=s_i/2\) and
\(u_i=a_i u\) in \eqref{matrix-exponential-convexity}.  We obtain
\begin{align*}
 \sum_{i,j=1}^N a_i a_j
 h_u\left(\frac{s_i+s_j}{2}\right)\geq0.
\end{align*}
Thus \(h_u\) satisfies the OS-positivity condition in
\cite[Definition~4]{ShuckerWidder}.  It follows from
\cite[Theorem~4 and the corollary following it]{ShuckerWidder} that there is
a unique finite positive measure \(\mu_{f,c}^u\) on \(\R^m\) such that
\begin{align*}
 h_u(r)=\int_{\R^m}e^{z\cdot r}\,\mu_{f,c}^u(\d z).
\end{align*}
If the opposite sign is used in the Laplace-transform convention, we
reflect the measure.

Let \(e_1,\ldots,e_m\) be the standard basis of \(\R^m\), and define
finite signed measures by
\begin{align*}
 (M_{f,c})_{kk}:=\mu_{f,c}^{e_k},\qquad (M_{f,c})_{kl}:=\frac14\left(
 \mu_{f,c}^{e_k+e_l}-\mu_{f,c}^{e_k-e_l}\right),
 \qquad k\neq l.
\end{align*}
The uniqueness of the scalar representing measures and the polarization
identity imply that
\begin{align*}
 \mu_{f,c}^u=\sum_{k,l=1}^m u_k u_l(M_{f,c})_{kl},
 \qquad u\in\R^m.
\end{align*}
Therefore, for every Borel set \(E\subset\R^m\),
\begin{align*}
 u\cdot M_{f,c}(E)u=\mu_{f,c}^u(E)\geq0.
\end{align*}
Hence \(M_{f,c}\) is a finite nonnegative matrix-valued measure.  Applying
the scalar representation to the basis vectors and their pairwise sums and
differences gives
\begin{align}\label{Hessian-Bernstein-Widder-representation}
 H_f(c,r)=\int_{\R^m}e^{z\cdot r}\,M_{f,c}(\d z),
 \qquad r\in\R^m.
\end{align}
It is unique because all its scalarizations are unique, and
\(M_{f,c}(\R^m)=H_f(c,0)\).
We will integrate these pointwise
measures over the channel variable \(c\).  For this purpose,
\(c\mapsto M_{f,c}\) must be a measurable kernel, meaning that
\(c\mapsto M_{f,c;ij}(B)\) is measurable for every Borel set
\(B\subset\R^m\) and every pair of matrix indices \(i,j\).  This follows
from the measurability of
\(c\mapsto H_f(c,r)\) for every \(r\in\R^m\) and the uniqueness of the
representing measure.

Representation
\eqref{Hessian-Bernstein-Widder-representation} makes \(H_f(c,\cdot)\)
real analytic.  Since \(H_f(c,r)=D_r^2\psi_f^*(c,r)\), for every
\(i,j,k\in\{1,\ldots,m\}\), the mixed derivatives satisfy
\begin{align*}
 \partial_{r_k}H_{f;ij}(c,r)
 =\partial_{r_k}\partial_{r_i}\partial_{r_j}\psi_f^*(c,r)=\partial_{r_i}\partial_{r_k}\partial_{r_j}\psi_f^*(c,r)
 =\partial_{r_i}H_{f;kj}(c,r).
\end{align*}
On the other hand, differentiating
\eqref{Hessian-Bernstein-Widder-representation} gives
\begin{align*}
 \partial_{r_k}H_{f;ij}(c,r)
 &=\int_{\R^m}z_k e^{z\cdot r}\,M_{f,c;ij}(\d z),\\
 \partial_{r_i}H_{f;kj}(c,r)
 &=\int_{\R^m}z_i e^{z\cdot r}\,M_{f,c;kj}(\d z).
\end{align*}
Consequently, the
finite signed measure \(z_kM_{f,c;ij}(\d z)-z_iM_{f,c;kj}(\d z)\) 
has vanishing bilateral Laplace transform for every \(r\in\R^m\).
Using the uniqueness of the bilateral Laplace transform to see that 
\begin{align}\label{Hessian-measure-compatibility}
 z_kM_{f,c;ij}(\d z)=z_iM_{f,c;kj}(\d z).
\end{align}
The scalar trace measure of \(M_{f,c}\) is given by
\begin{align*}
 B\longmapsto\operatorname{tr}\bigl(M_{f,c}(B)\bigr),
 \qquad B\subset\R^m\ \text{Borel}.
\end{align*}
Suppose that
\(\operatorname{tr}(M_{f,c}(B))=0\), then the nonnegative matrix
\(M_{f,c}(B)\) has zero trace and hence vanishes. This yields that every component measure \(M_{f,c;ij}\) is absolutely continuous with
respect to this trace measure. The Radon--Nikodym
theorem therefore provides measurable functions \(G_{f,c;ij}\) such that
\begin{align*}
 M_{f,c;ij}(\d z)
 =G_{f,c;ij}(z)\,\operatorname{tr}M_{f,c}(\d z).
\end{align*}
Collecting these functions into a matrix gives
\begin{align*}
 M_{f,c}(\d z)
 =G_{f,c}(z)\,\operatorname{tr}M_{f,c}(\d z).
\end{align*}
Since \(M_{f,c}\) is a symmetric nonnegative matrix-valued measure,
\(G_{f,c}(z)\) is symmetric and nonnegative for
\(\operatorname{tr}M_{f,c}\)-almost every \(z\).  Moreover, taking the trace in the
Radon--Nikodym identity gives
\begin{align*}
 \operatorname{tr}G_{f,c}(z)=1
 \qquad\text{for }\operatorname{tr}M_{f,c}\text{-almost every }z.
\end{align*}
Substituting the Radon--Nikodym representation into
\eqref{Hessian-measure-compatibility} gives, simultaneously for all
\(i,j,k\in\{1,\ldots,m\}\),
\begin{align*}
 z_kG_{f,c;ij}(z)=z_iG_{f,c;kj}(z)
 \qquad\text{for }\operatorname{tr}M_{f,c}\text{-almost every }z.
\end{align*}
Fix such a \(z\neq0\) and choose an index \(\ell\) with
\(z_\ell\neq0\).  Taking \(k=\ell\) gives
\begin{align*}
 G_{f,c;ij}(z)
 =z_i\frac{G_{f,c;\ell j}(z)}{z_\ell}.
\end{align*}
Thus, for every \(j\), the \(j\)-th column of \(G_{f,c}(z)\) is a
multiple of \(z\). Moreover, if \(\alpha_j(z):=\frac{G_{f,c;\ell j}(z)}{z_\ell}\),
then we have that \(G_{f,c}(z)=z\otimes\alpha(z)\). Since \(G_{f,c}(z)\) is symmetric,
\(z_i\alpha_j(z)=z_j\alpha_i(z)\) for all \(i,j\).  Taking
\(i=\ell\) shows that
\begin{align*}
 \alpha(z)=\frac{\alpha_\ell(z)}{z_\ell}z.
\end{align*}
Consequently, \(G_{f,c}\) can be represented by 
\begin{align*}
 G_{f,c}(z)=\frac{\alpha_\ell(z)}{z_\ell}z\otimes z.
\end{align*}
Finally, the normalization \(\operatorname{tr}G_{f,c}(z)=1\) gives \(\frac{\alpha_\ell(z)}{z_\ell}|z|^2=1\). Hence
\begin{align*}
 G_{f,c}(z)=\frac{z\otimes z}{|z|^2}
 \qquad\text{for }\operatorname{tr}M_{f,c}\text{-almost every }z\neq0.
\end{align*}
Define
\begin{align}\label{Hessian-data-construction}
 A_f(c):=M_{f,c}(\{0\}),
 \qquad
 \nu_{f,c}(\d z)
 :=\boldsymbol1_{\{z\neq0\}}|z|^{-2}
 \operatorname{tr}M_{f,c}(\d z),
 \qquad
 \nu_f(\d c,\d z)
 :=\lambda(\d c)\nu_{f,c}(\d z).
\end{align}
It follows that
\begin{align}\label{Hessian-Laplace-Levy-form}
 H_f(c,r)
 =A_f(c)+\int_{\R^m\setminus\{0\}}
 (z\otimes z)e^{z\cdot r}\,\nu_{f,c}(\d z).
\end{align}
Since \(\psi_f^*(c,\cdot)\) is even, its Hessian is even.  Uniqueness in
\eqref{Hessian-Bernstein-Widder-representation} then shows that
\(M_{f,c}(\d z)=M_{f,c}(\d(-z))\), and hence
\(\nu_f\) is symmetric.

In the following, we recover \(\psi_f^*(c,r)\) from its Hessian.  The second-order
Taylor formula with integral remainder along the segment
\(s\mapsto sr\) gives
\begin{align*}
 \psi_f^*(c,r)
 =\psi_f^*(c,0)+D_r\psi_f^*(c,0)\cdot r+\int_0^1(1-s)\,
 r\cdot H_f(c,sr)r\,\d s.
\end{align*}
The first two terms vanish by assumption.  We substitute
\eqref{Hessian-Laplace-Levy-form} into the remaining term to obtain
\begin{align*}
 \psi_f^*(c,r)
 ={}&\int_0^1(1-s)r\cdot A_f(c)r\,\d s\\
 &+\int_0^1\int_{\R^m\setminus\{0\}}
 (1-s)(z\cdot r)^2e^{s z\cdot r}
 \,\nu_{f,c}(\d z)\,\d s.
\end{align*}
The second integrand is nonnegative, and hence Tonelli's theorem allows us
to interchange the \(s\)- and \(z\)-integrals.  Using
\begin{align*}
 \int_0^1(1-s)\,\d s=\frac12,\qquad (z\cdot r)^2\int_0^1(1-s)e^{s z\cdot r}\,\d s
 =e^{z\cdot r}-1-z\cdot r,
\end{align*}
we obtain
\begin{align*}
 \psi_f^*(c,r)
 =\frac12r\cdot A_f(c)r
 +\int_{\R^m\setminus\{0\}}
 \bigl(e^{z\cdot r}-1-z\cdot r\bigr)
 \,\nu_{f,c}(\d z).
\end{align*}
Taking \(r=\xi(c)\), integrating over \(c\in\mathscr C\), and using
\(\nu_f(\d c,\d z)=\lambda(\d c)\nu_{f,c}(\d z)\), we deduce that
\begin{align*}
 \Psi_f^*(\xi)
 =\frac12\int_{\mathscr C}
 \xi(c)\cdot A_f(c)\xi(c)\,\lambda(\d c)+\int_{\mathscr C\times(\R^m\setminus\{0\})}
 \bigl(e^{z\cdot\xi(c)}-1-z\cdot\xi(c)\bigr)
 \,\nu_f(\d c,\d z),
\end{align*}
which is \eqref{Hessian-Levy-representation-Psi}.

We next apply the integral Taylor formula at the shifted point \(\eta_f\):
\begin{align*}
 \mathcal C_f(\xi)
 =\int_0^1(1-s)D^2\Psi_f^*(\eta_f+s\xi)[\xi,\xi]\,\d s.
\end{align*}
By \eqref{integrated-Hessian-assumption} and
\eqref{Hessian-Laplace-Levy-form},
\begin{align*}
 D^2\Psi_f^*(\eta_f+s\xi)[\xi,\xi]&={}
 \int_{\mathscr C}\xi(c)\cdot A_f(c)\xi(c)\,\lambda(\d c)\\
 &\qquad{}+
 \int_{\mathscr C\times(\R^m\setminus\{0\})}
 (z\cdot\xi(c))^2
 e^{z\cdot\eta_f(c)}e^{s z\cdot\xi(c)}
 \,\nu_f(\d c,\d z).
\end{align*}
Substituting this expression into the Taylor formula and applying Tonelli's
theorem, we obtain
\begin{align*}
 \mathcal C_f(\xi)
 =\frac12\int_{\mathscr C}
 \xi(c)\cdot A_f(c)\xi(c)\,\lambda(\d c)+\int_{\mathscr C\times(\R^m\setminus\{0\})}
 e^{z\cdot\eta_f(c)}(z\cdot\xi(c))^2
 \left(\int_0^1(1-s)e^{s z\cdot\xi(c)}\,\d s\right)
 \,\nu_f(\d c,\d z).
\end{align*}
Using
\begin{align*}
 \int_0^1(1-s)a^2e^{sa}\,\d s=e^a-1-a
\end{align*}
with \(a=z\cdot\xi(c)\), we can see that 
\begin{align*}
 \mathcal C_f(\xi)
 ={}&\frac12\int_{\mathscr C}
 \xi(c)\cdot A_f(c)\xi(c)\,\lambda(\d c)+\int_{\mathscr C\times(\R^m\setminus\{0\})}
 \bigl(e^{z\cdot\xi(c)}-1-z\cdot\xi(c)\bigr)
 e^{z\cdot\eta_f(c)}\,\nu_f(\d c,\d z).
\end{align*}
By the definition \eqref{Hessian-criterion-tilt}, the factor
\(e^{z\cdot\eta_f(c)}\) is part of the tilted jump measure.  Therefore we complete the proof of \eqref{Hessian-criterion-centered-LK}.

We finally verify that \((A_f,\nu_f)\) are admissible L\'evy channel data
in the sense of Definition~\ref{def:symmetric-Levy-data}.  For the local
integrability condition \eqref{local-Levy-integrability}, we take the trace of
\eqref{Hessian-Laplace-Levy-form} at \(r=0\), integrating over a measurable
set \(K\subset\mathscr C\) of finite \(\lambda\)-measure, and using
\eqref{local-Hessian-trace-integrability}.  This gives
\begin{align*}
 \int_K\operatorname{tr}A_f(c)\,\lambda(\d c)
 +\int_{K\times(\R^m\setminus\{0\})}|z|^2\,
 \nu_f(\d c,\d z)<\infty.
\end{align*}
Since \(A_f(c)\) is nonnegative and
\(1\wedge|z|^2\leq |z|^2\), all the local integrability requirements in
\eqref{local-Levy-integrability} follow.

We now turn to the two bounds in
\eqref{Levy-exponential-integrability}.  It follows from
\eqref{Hessian-Laplace-Levy-form} at \(r=0\) and
\eqref{integrated-Hessian-assumption}, for every \(\xi\in X\),
\begin{align*}
 \int_{\mathscr C}\xi(c)\cdot A_f(c)\xi(c)\,\lambda(\d c)
 \leq D^2\Psi_f^*(0)[\xi,\xi]<\infty.
\end{align*}
For the jump term, notice that
\begin{align*}
 (z\cdot\xi(c))^2
 e^{|z\cdot\eta_f(c)|+|z\cdot\xi(c)|}\leq
 \sum_{\alpha,\beta\in\{-1,1\}}
 (z\cdot\xi(c))^2
 e^{z\cdot(\alpha\eta_f(c)+\beta\xi(c))}.
\end{align*}
After integration with respect to \(\nu_f(\d c,\d z)\), we bound each term
on the right by
\(D^2\Psi_f^*(\alpha\eta_f+\beta\xi)[\xi,\xi]\), which is finite by
\eqref{integrated-Hessian-assumption}.  We obtain
\eqref{Levy-exponential-integrability}, and hence \((A_f,\nu_f)\) are
admissible L\'evy channel data.
\end{proof}

The two basic fluctuation mechanisms are both contained in this criterion.
If
\(\psi_f^*(c,r)=\frac12r\cdot M_f(c)r\), then
\(H_f(c,r)=M_f(c)\), the measure in
\eqref{Hessian-Bernstein-Widder-representation} is concentrated at zero,
and \(\mathcal C_f\) is Gaussian.  If
\begin{align*}
 \psi_f^*(c,r)=\kappa_f(c)\bigl(\cosh(b(c)\cdot r)-1\bigr),
\end{align*}
then
\begin{align*}
 H_f(c,r)=\kappa_f(c)b(c)\otimes b(c)
 \cosh(b(c)\cdot r),
\end{align*}
and the symmetric jump measure is
\begin{align*}
 \nu_{f,c}
 =\frac{\kappa_f(c)}2
 \bigl(\delta_{b(c)}+\delta_{-b(c)}\bigr).
\end{align*}
In this case, the centered remainder is the cumulant of two oppositely
directed, force-biased Poisson currents.

Before constructing the stochastic current, we record that the L\'evy
characteristics in the preceding representation are intrinsic.

\begin{proposition}
\label{prop:local-Levy-uniqueness}
Assume that there is a \(\pi\)-system
\(\mathcal K\subset\mathcal B\) such that
\begin{align}\label{localized-channel-test-assumption}
 \sigma(\mathcal K)=\mathcal B,\qquad
 \lambda(K)<\infty\quad\text{for every }K\in\mathcal K,
 \qquad
 K_n\uparrow\mathscr C
 \quad\text{for some }K_n\in\mathcal K,
\end{align}
and that
\begin{align}\label{localized-channel-fields-in-X}
 \boldsymbol1_K\theta\in X
 \qquad\text{for every }K\in\mathcal K
 \text{ and every }\theta\in\R^m.
\end{align}
Let \((A_i,\nu_i)\), \(i=1,2\), be two admissible L\'evy channel data
sets in the sense of
Definition~\ref{def:symmetric-Levy-data}.  Suppose that, for every
\(\xi\in X\),
\begin{align}\label{equality-of-local-Levy-exponents}
 &\frac12\int_{\mathscr C}\xi(c)\cdot A_1(c)\xi(c)\,\lambda(\d c)
 +\int_{\mathscr C\times(\R^m\setminus\{0\})}
 \bigl(e^{z\cdot\xi(c)}-1-z\cdot\xi(c)\bigr)
 \,\nu_1(\d c,\d z)\notag\\
 &\qquad={}
 \frac12\int_{\mathscr C}\xi(c)\cdot A_2(c)\xi(c)\,\lambda(\d c)
 +\int_{\mathscr C\times(\R^m\setminus\{0\})}
 \bigl(e^{z\cdot\xi(c)}-1-z\cdot\xi(c)\bigr)
 \,\nu_2(\d c,\d z).
\end{align}
Then
\begin{align*}
 A_1(c)=A_2(c)\quad\text{for \(\lambda\)-almost every \(c\)},
 \qquad \nu_1=\nu_2.
\end{align*}
Consequently, whenever \(\mathcal C_f\) has the local
L\'evy--Khintchine representation
\eqref{abstract-current-Levy-Khintchine}, its Gaussian covariance
\(A_f\) and jump measure \(\nu_f^{\rm tilt}\) are unique.
\end{proposition}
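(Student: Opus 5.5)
The plan is to test \eqref{equality-of-local-Levy-exponents} with localized fields and reduce to uniqueness for a finite-dimensional L\'evy--Khintchine exponent. Fix \(K\in\mathcal K\) and \(\theta\in\R^m\), and take \(\xi=\boldsymbol1_K\theta\), which lies in \(X\) by \eqref{localized-channel-fields-in-X}. Substitution gives, for \(i=1,2\),
\begin{align*}
 \Phi_i^K(\theta):=\frac12\theta\cdot\Bigl(\int_KA_i\,\d\lambda\Bigr)\theta
 +\int_{\R^m\setminus\{0\}}\bigl(e^{z\cdot\theta}-1-z\cdot\theta\bigr)\,\nu_i^K(\d z),
 \qquad \nu_i^K(B):=\nu_i(K\times B).
\end{align*}
The hypothesis then states that \(\Phi_1^K=\Phi_2^K\) on \(\R^m\). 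By \eqref{local-Levy-integrability}, \(\nu_i^K\) is a L\'evy measure. By \eqref{Levy-exponential-integrability}, applied to \(\xi=\pm\boldsymbol1_K\theta\), it has exponential moments of every order, and \(\int_KA_i\,\d\lambda\) is finite.

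The key step is to pass from equality of the real exponent to equality of the characteristics. Because every exponential moment is finite, \(\Phi_i^K\) extends to an entire function on \(\mathbb C^m\). Two entire functions that agree on \(\R^m\) agree everywhere, so in particular on \(i\R^m\), where \(\Phi_i^K(i\theta)\) is the characteristic exponent of the infinitely divisible law with triplet \((0,\int_KA_i,\nu_i^K)\) in the fully compensated convention. The compensator \(-z\cdot\theta\) is integrable against \(\nu_i^K\) at large \(|z|\) because of the exponential moments. Uniqueness of the L\'evy--Khintchine triplet then gives
\[
\int_KA_1\,\d\lambda=\int_KA_2\,\d\lambda,\qquad \nu_1(K\times B)=\nu_2(K\times B)
\]
for all Borel \(B\subset\R^m\setminus\{0\}\).

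An alternative route avoids analytic continuation. Differentiate twice in \(\theta\), which is justified by dominated convergence using the exponential moments. This gives \(\int_KA_i+\int zz^{\mathsf T}e^{z\cdot\theta}\nu_i^K(\d z)\). Uniqueness of the bilateral Laplace transform of the finite matrix measure \(\int_KA_i\,\delta_0+zz^{\mathsf T}\nu_i^K\) then separates the atom at \(0\) from the jump part. This is the same mechanism as in the proof of Theorem~\ref{thm:Hessian-Levy-sufficient-condition}.

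Next I globalize. For \(A\), the componentwise signed measures \(K\mapsto\int_K(A_1-A_2)_{kl}\,\d\lambda\) vanish on the \(\pi\)-system \(\mathcal K\). Restricted to each \(K_n\) they are finite, by the local integrability of \(A_i\). Dynkin's \(\pi\)-\(\lambda\) theorem then makes them vanish on \(\mathcal B\cap K_n\) for every \(n\), so \(A_1=A_2\) \(\lambda\)-a.e. For \(\nu\), the sets \(K\times B\) with \(K\in\mathcal K\) form a \(\pi\)-system generating \(\mathcal B\otimes\mathcal B(\R^m\setminus\{0\})\). Both measures are \(\sigma\)-finite on the pieces \(K_n\times\{|z|>1/j\}\), since \(\nu_i(K_n\times\{|z|>1/j\})\le j^2\int_{K_n}(1\wedge|z|^2)\,\d\nu_i<\infty\), and these pieces exhaust the space. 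The uniqueness-of-measures theorem therefore gives \(\nu_1=\nu_2\). The final statement follows by applying this to \(\mathcal C_f\) with two putative representations, whose tilted data are admissible.

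I expect the main obstacle to be the finite-dimensional uniqueness step: either justifying the analytic continuation together with its compensation convention, or justifying the differentiation under the integral. Both rest on the exponential-moment bound \eqref{Levy-exponential-integrability}. The measure-theoretic extension is routine once the \(\sigma\)-finiteness bookkeeping is done.
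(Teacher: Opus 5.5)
Your proposal is correct and follows the paper's proof. You localize with $\xi=\boldsymbol1_K\theta$, reduce to uniqueness of the finite-dimensional L\'evy--Khintchine characteristics, and globalize by a $\pi$-$\lambda$ argument using the exhaustion $K_n$. You prove explicitly, by analytic continuation, the step the paper cites (that exponential moments near the origin determine the law), and your single product $\pi$-system $\{K\times B\}$ with the finite pieces $K_n\times\{|z|>1/j\}$ is a slightly tidier version of the paper's two-stage monotone-class argument for $\nu$.
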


\begin{proof}
Fix \(K\in\mathcal K\) and take \(\xi=\boldsymbol1_K\theta\) in
\eqref{equality-of-local-Levy-exponents}.  Set
\begin{align*}
 A_i^K:=\int_KA_i(c)\,\lambda(\d c),
 \qquad
 \nu_i^K(\Gamma):=\nu_i(K\times\Gamma),
 \qquad
 \Gamma\in\mathcal B(\R^m\setminus\{0\}),\quad i=1,2.
\end{align*}
Then \eqref{equality-of-local-Levy-exponents} states that the two functions
\begin{align*}
 \theta\longmapsto
 \frac12\theta\cdot A_i^K\theta
 +\int_{\R^m\setminus\{0\}}
 \bigl(e^{z\cdot\theta}-1-z\cdot\theta\bigr)\,\nu_i^K(\d z)
\end{align*}
coincide.  They are the logarithmic moment-generating functions of two
infinitely divisible laws on \(\R^m\).  The exponential integrability in
\eqref{Levy-exponential-integrability} makes them finite in a neighborhood
of the origin, so their equality implies that the two laws coincide.

Applying the uniqueness of the finite-dimensional L\'evy characteristics
\cite[Corollary~2.4.21]{ApplebaumLevy}, we obtain
\begin{align*}
 A_1^K=A_2^K,
 \qquad
 \nu_1^K(\Gamma)=\nu_2^K(\Gamma),
\end{align*}
for every Borel set
\(\Gamma\subset\R^m\setminus\{0\}\) bounded away from zero.  We vary
\(K\) over \(\mathcal K\) and use the exhaustion in
\eqref{localized-channel-test-assumption}, the uniqueness for
\(\sigma\)-finite measures on a generating \(\pi\)-system then identifies
the matrix-valued measures, and we obtain \(A_1=A_2\)
\(\lambda\)-almost everywhere.  For each \(\Gamma\) bounded away from zero,
the same argument identifies
\(K\mapsto\nu_i(K\times\Gamma)\) on \(\mathscr C\).  Such sets \(\Gamma\),
together with an exhaustion of \(\R^m\setminus\{0\}\) by annuli, form a
determining class for the jump variable.  The local L\'evy condition gives
\(\sigma\)-finiteness, so a second monotone-class argument yields
\(\nu_1=\nu_2\) on
\(\mathscr C\times(\R^m\setminus\{0\})\).
\end{proof}

\begin{remark}
The localization assumptions of
Proposition~\ref{prop:local-Levy-uniqueness} hold for the VFP, Landau,
Boltzmann, and kinetic \(\alpha\)-stable channel spaces fixed in
\eqref{VFP-channel-space}, \eqref{Landau-channel-space},
\eqref{Boltzmann-channel-space}, and \eqref{stable-channel-space}.  For the
first three models, one may choose \(\mathcal K\) to be a generating
\(\pi\)-system of bounded rectangles, together with a generating class of
Borel subsets of \(\mathbb S^{d-1}\) in the Boltzmann case.  These sets
have finite \(\lambda\)-measure and their indicators belong to the
corresponding ambient \(L^\infty\) space.  For the kinetic
\(\alpha\)-stable model, take bounded rectangles in \((x,v)\) and restrict
the jump variable to annuli
\begin{align*}
 \{z:r\leq |z|\leq R\},\qquad 0<r<R<\infty.
\end{align*}
Such sets have finite \(\lambda_\alpha\)-measure, their indicators belong
to \(X_\alpha\), and a countable family of them generates the Borel
\(\sigma\)-algebra and exhausts \(\mathscr C_\alpha\).  Consequently,
whenever the centered cumulant of any of these four models has a local
L\'evy--Khintchine representation, its Gaussian and jump characteristics
are uniquely determined.
For a fixed state \(f\), the centered cumulant \(\mathcal C_f\)
determines not only the cylindrical law of the centered current, but also
its local Gaussian and jump characteristics whenever the test space \(X\)
is rich enough to satisfy the localization assumptions above.  This
uniqueness concerns the L\'evy characteristics and does not assert
uniqueness of a particular stochastic-integral realization.
\end{remark}

We next realize the associated centered current using independent Gaussian
and Poisson random measures.

\begin{theorem}\label{thm:stochastic-integral-realization}
Fix a strictly positive kinetic density
\begin{align}\label{frozen-kinetic-state-class}
 f\in L_+^1(\mathsf Z)
 :=\left\{g\in L^1(\mathsf Z):g\geq0\ \text{a.e.}\right\},
 \qquad f>0\quad\text{a.e.}
\end{align}
such that \(\mathcal E(f)\in\R\), the first variation
\[
 \frac{\delta\mathcal E}{\delta f}(f)
\]
is represented by a measurable function on \(\mathsf Z\), and the channel
field
\[
 -\frac12\mathscr D\frac{\delta\mathcal E}{\delta f}(f)
\]
is well defined, belongs to \(X\), and is a
G\^ateaux differentiability point of \(\Psi_f^*\).  Suppose that \(A_f\) and
\(\nu_f^{\rm tilt}\) satisfy the channel absolute-continuity and
integrability conditions \eqref{local-Levy-integrability}--
\eqref{Levy-exponential-integrability} and represent
\(\mathcal C_f\) through
\eqref{abstract-current-Levy-Khintchine}.  Let
\(\sigma_f(c):=A_f(c)^{1/2}\) be the measurable symmetric nonnegative square
root of \(A_f(c)\).  On a sufficiently rich filtered probability space,
let
\[
 W(\d s,\d c)
\]
be an \(\R^m\)-valued Gaussian white-noise random measure on
\([0,\infty)\times\mathscr C\) with control measure
\(\d s\,\lambda(\d c)\).  Independently, for each \(\varepsilon>0\), let
\[
 \mathcal N_f^\varepsilon(\d s,\d c,\d z)
\]
be a Poisson random measure on
\([0,\infty)\times\mathscr C\times(\R^m\setminus\{0\})\) with intensity
\[
 \varepsilon^{-1}\d s\,
 \nu_f^{\rm tilt}(\d c,\d z),
\]
and write
\[
 \widetilde{\mathcal N}_f^\varepsilon
 :=\mathcal N_f^\varepsilon
   -\varepsilon^{-1}\d s\,\nu_f^{\rm tilt}.
\]
Then the stochastic integrals
\begin{align}\label{stochastic-integral-Levy-realization}
 \left\langle\mathbb L_t^{\varepsilon,f},\xi\right\rangle_{\mathscr C}
 :=\sqrt{\varepsilon}
 \int_0^t\int_{\mathscr C}
 \sigma_f(c)^{\mathsf T}\xi(c)\cdot W(\d s,\d c)+\varepsilon
 \int_0^t\int_{\mathscr C\times(\R^m\setminus\{0\})}
 z\cdot\xi(c)\,
 \widetilde{\mathcal N}_f^\varepsilon(\d s,\d c,\d z),
 \qquad \xi\in X,
\end{align}
define a centered cylindrical L\'evy current and satisfy, for every
\(t,h\geq0\) and every \(\xi\in X\),
\begin{align}\label{stochastic-integral-cumulant-identity}
 \varepsilon\log\E\exp\left\{
 \varepsilon^{-1}
 \left\langle
 \mathbb L_{t+h}^{\varepsilon,f}
 -\mathbb L_t^{\varepsilon,f},\xi
 \right\rangle_{\mathscr C}
 \right\}
 =h\mathcal C_f(\xi).
\end{align}
\end{theorem}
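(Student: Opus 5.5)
The plan is to fix \(\xi\in X\) and treat the two stochastic integrals in \eqref{stochastic-integral-Levy-realization} separately. Their sum has an exponential moment equal to the product of the two exponential moments, because \(W\) and \(\mathcal N_f^\varepsilon\) are independent. Both integrands are deterministic: they do not depend on \(s\) or \(\omega\). Hence each integral has independent, stationary increments. On \((t,t+h]\) the increments are measurable with respect to the restriction of \(W\) and \(\mathcal N_f^\varepsilon\) to \((t,t+h]\times\cdots\), and these restrictions are independent of the past and invariant in law under time shifts. For any finitely many \(\xi_1,\dots,\xi_n\in X\), linearity of the integrals in \(\xi\) makes the vector \((\langle\mathbb L_t^{\varepsilon,f},\xi_k\rangle)_k\) a single integral of an \(\R^n\)-valued deterministic integrand. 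It is therefore an \(\R^n\)-valued L\'evy process, which gives the cylindrical L\'evy property.

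\emph{Gaussian part.} The first integral is well defined as a Wiener integral because
\[
\int_{\mathscr C}|\sigma_f^{\mathsf T}\xi|^2\,\d\lambda=\int_{\mathscr C}\xi\cdot A_f\xi\,\d\lambda<\infty
\]
by the first bound in \eqref{Levy-exponential-integrability}. Its increment over \((t,t+h]\) is centered Gaussian with variance \(\varepsilon h\int\xi\cdot A_f\xi\,\d\lambda\). After multiplying by \(\varepsilon^{-1}\), the variance becomes \(\varepsilon^{-1}h\int\xi\cdot A_f\xi\,\d\lambda\). The log-Laplace transform is therefore \(\frac{h}{2\varepsilon}\int\xi\cdot A_f\xi\,\d\lambda\), and multiplying by \(\varepsilon\) gives the quadratic term of \eqref{abstract-current-Levy-Khintchine}.

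\emph{Poisson part.} This is the main technical step. The goal is the compensated exponential formula: for the integrand \(g(c,z)=z\cdot\xi(c)\) and intensity \(\varepsilon^{-1}\d s\,\nu_f^{\rm tilt}\),
\[
\log\E\exp\Bigl\{\int_t^{t+h}\!\!\int g\,\d\widetilde{\mathcal N}_f^\varepsilon\Bigr\}=\frac h\varepsilon\int\bigl(e^{g}-1-g\bigr)\,\d\nu_f^{\rm tilt}.
\]
Note that the \(\varepsilon\) in front of the integral cancels the \(\varepsilon^{-1}\) in the exponent. The plan is to prove this first on the sets
\[
E_n:=K_n\times\{1/n\le|z|\le n\},\qquad K_n\uparrow\mathscr C,\quad\lambda(K_n)<\infty,
\]
which exhaust the space; \(K_n\) can be taken from a \(\sigma\)-finite exhaustion. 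On each \(E_n\) the intensity is finite by \eqref{local-Levy-integrability} (\(\nu_f^{\rm tilt}\) does not charge \(\lambda\)-null sets). There the integral is a finite compound Poisson sum minus its compensator, and the exact formula is the Poisson exponential formula \cite[Theorem~2.3.8]{ApplebaumLevy}, exactly as used in Section~2.

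Passing to the limit \(n\to\infty\) is the obstacle. First, \(g\mathbf 1_{E_n}\to g\) in \(L^2(\nu_f^{\rm tilt})\) because \(\int|z\cdot\xi|^2\,\d\nu_f^{\rm tilt}<\infty\), which follows from the second condition in \eqref{Levy-exponential-integrability}. So the truncated integrals converge in \(L^2(\mathbb P)\) by the isometry, and hence in probability. Second, the right-hand side converges by dominated convergence, using \(|e^a-1-a|\le a^2e^{|a|}\) together with the same exponential-moment condition. Third, to pass the limit through the expectation I will prove uniform integrability of the exponentials. The plan is to apply the formula on \(E_n\) to \(2g\) instead of \(g\), which gives \(\sup_n\E e^{2X_n}<\infty\), provided \(\int(2z\cdot\xi)^2e^{2|z\cdot\xi|}\,\d\nu_f^{\rm tilt}<\infty\). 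Since \(X\) is a vector space, \(2\xi\in X\), so this follows from \eqref{Levy-exponential-integrability} applied to \(2\xi\); note that the tilt factor \(e^{|z\cdot\eta_f|}\) is already absorbed into \(\nu_f^{\rm tilt}\). Vitali's theorem then gives convergence of \(\E e^{X_n}\).

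Combining the two parts, the log moment generating function of \(\varepsilon^{-1}\langle\mathbb L_{t+h}^{\varepsilon,f}-\mathbb L_t^{\varepsilon,f},\xi\rangle\) equals \(\varepsilon^{-1}h\) times the right-hand side of \eqref{abstract-current-Levy-Khintchine}, which is \(\varepsilon^{-1}h\,\mathcal C_f(\xi)\). This is \eqref{stochastic-integral-cumulant-identity}. Both integrals have mean zero, so the current is centered.
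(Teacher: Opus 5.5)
Your proposal is correct and follows the same route as the paper: the Gaussian isometry handles the Wiener part, the Poisson exponential formula handles the compensated jump part, and independence lets you add the two log-moment-generating functions before invoking the L\'evy--Khintchine representation of \(\mathcal C_f\). The one difference is the step from the exponential formula to real exponents. The paper only cites the formula and says that exponential integrability lets one pass from characteristic functions to moment-generating functions. You instead prove the real-exponent identity directly, by truncating to the finite-intensity sets \(E_n\) and using the bound for \(2\xi\in X\) to obtain uniform integrability; this is a sound and self-contained way to fill in that step.
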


\begin{proof}
For every admissible test field \(\xi\), the assumptions give
\begin{align*}
 \int_{\mathscr C}
 \left|\sigma_f(c)^{\mathsf T}\xi(c)\right|^2\lambda(\d c)
 &=\int_{\mathscr C}
 \xi(c)\cdot A_f(c)\xi(c)\,\lambda(\d c)<\infty,\\
 \int_{\mathscr C\times(\R^m\setminus\{0\})}
 |z\cdot\xi(c)|^2\,\nu_f^{\rm tilt}(\d c,\d z)&<\infty.
\end{align*}
It follows that the first integral in
\eqref{stochastic-integral-Levy-realization} exists by the Gaussian
isometry, and the second exists as a square-integrable compensated Poisson
integral.  Both have mean zero.  Because the driving random measures are
independently scattered in time and mutually independent, every finite
collection of the projections in
\eqref{stochastic-integral-Levy-realization} has stationary independent
increments.

In the following, we identify their laws on the interval \((t,t+h]\).  The
Wiener integral below is centered Gaussian, with variance given by the
Gaussian isometry; see \cite[Lemma~4.3.11]{ApplebaumLevy}.  Hence
\begin{align*}
 \log\E\exp\left\{
 \varepsilon^{-1/2}
 \int_t^{t+h}\int_{\mathscr C}
 \sigma_f(c)^{\mathsf T}\xi(c)\cdot W(\d s,\d c)
 \right\}=
 \frac{h}{2\varepsilon}\int_{\mathscr C}
 \xi(c)\cdot A_f(c)\xi(c)\,\lambda(\d c).
\end{align*}
The Poisson exponential formula and its compensated form follow from
\cite[Theorem~2.3.8(1) and Eq.~(2.9)]{ApplebaumLevy}.  Using the
exponential integrability in \eqref{Levy-exponential-integrability} to pass
from characteristic functions to moment-generating functions, we obtain
\begin{align*}
 \log\E\exp\left\{
 \int_t^{t+h}\int_{\mathscr C\times(\R^m\setminus\{0\})}
 z\cdot\xi(c)\,
 \widetilde{\mathcal N}_f^\varepsilon(\d s,\d c,\d z)
 \right\}=
 \frac{h}{\varepsilon}
 \int_{\mathscr C\times(\R^m\setminus\{0\})}
 \bigl(e^{z\cdot\xi(c)}-1-z\cdot\xi(c)\bigr)
 \,\nu_f^{\rm tilt}(\d c,\d z).
\end{align*}
By independence, we may add the two logarithmic moment-generating
functions.  Multiplying by \(\varepsilon\) and using
\eqref{abstract-current-Levy-Khintchine}, we obtain
\eqref{stochastic-integral-cumulant-identity}.
\end{proof}

\begin{remark}[An explicit Gaussian--Poisson form of the master equation]
\label{rem:explicit-master-SPDE}
Theorem~\ref{thm:stochastic-integral-realization} justifies the frozen-state
Gaussian--Poisson integrals used in the formal master equation
\eqref{intro-explicit-master-SPDE}.  We write the distributional equation
in weak form.  For every
\(p\in\operatorname{Dom}(\mathscr D)\),
\begin{align}\label{explicit-Gaussian-Poisson-master-weak}
 \d\langle F_t^\varepsilon,p\rangle
 +\langle\mathcal T(F_t^\varepsilon),p\rangle\,\d t=&
 D\Psi_{F_t^\varepsilon}^*
 \left(-\frac12\mathscr D
 \frac{\delta\mathcal E}{\delta f}(F_t^\varepsilon)\right)
 [\mathscr Dp]\,\d t\notag\\
 &+\sqrt{\varepsilon}\int_{\mathscr C}
 \sigma_{F_{t-}^\varepsilon}(c)^{\mathsf T}
 \mathscr Dp(c)\cdot W(\d t,\d c)\notag\\
 &+\varepsilon
 \int_{\mathscr C\times(\R^m\setminus\{0\})}
 z\cdot\mathscr Dp(c)\,
 \widetilde{\mathcal N}^\varepsilon(\d t,\d c,\d z).
\end{align}
For the Boltzmann equation, the Poisson term in
\eqref{explicit-Gaussian-Poisson-master-weak} agrees with the collision
noise of Section~2 when \(m=1\), \(A_f=0\), and the tilted jump measure is
the two-point measure
\begin{align*}
 \nu_{f,\mathrm B}^{\rm tilt}(\d c,\d z)
 =\frac14B(c)\left[
 a_f(c)\delta_{+1}(\d z)+b_f(c)\delta_{-1}(\d z)
 \right]\lambda(\d c),
\end{align*}
where
\[
 a_f(c):=f(x,v)f(x,v_*),
 \qquad
 b_f(c):=f(x,v')f(x,v_*').
\]
This is the measure identified in
\eqref{Boltzmann-Levy-measure}.  Let
\(\widetilde{\mathcal N}_+\) and
\(\widetilde{\mathcal N}_-\) be independent compensated Poisson random
measures with intensity
\(\d t\,\lambda(\d c)\,\d u\).  The marking and thinning properties of
Poisson random measures \cite[Chapter~2]{ApplebaumLevy} give, conditionally on
\(F_{t-}^\varepsilon=f\),
\begin{align*}
 \varepsilon\int_{\mathscr C\times\{-1,+1\}}
 z\,\mathscr D_{\mathrm B}p(c)\,
 \widetilde{\mathcal N}^\varepsilon(\d t,\d c,\d z)&\stackrel{\mathrm{law}}{=}
 \varepsilon\int_{\mathscr C\times[0,\infty)}
 \mathscr D_{\mathrm B}p(c)
 \boldsymbol1_{\{u\leq(4\varepsilon)^{-1}B(c)a_f(c)\}}
 \widetilde{\mathcal N}_+(\d t,\d c,\d u)\notag\\
 &\qquad\quad{}
 -\varepsilon\int_{\mathscr C\times[0,\infty)}
 \mathscr D_{\mathrm B}p(c)
 \boldsymbol1_{\{u\leq(4\varepsilon)^{-1}B(c)b_f(c)\}}
 \widetilde{\mathcal N}_-(\d t,\d c,\d u).
\end{align*}
If, in addition, the pre--post-collisional involution preserves
\(\lambda\), the kernel is microreversible, and
\(b_f(\mathfrak r c)=a_f(c)\), then the calculation
\eqref{reversed-channel-derivative-calculation}--
\eqref{unified-gives-Boltzmann} converts this two-sided current, after
application of \(\mathscr D_{\mathrm B}^*\), into the forward-only
Boltzmann noise
\begin{align*}
 \varepsilon\int_{\mathscr C\times[0,\infty)}
 \mathcal J_c
 \boldsymbol1_{\{u\leq(2\varepsilon)^{-1}B(c)a_f(c)\}}
 \widetilde{\mathcal N}(\d t,\d c,\d u).
\end{align*}
It follows that the Poisson term in
\eqref{explicit-Gaussian-Poisson-master-weak} and the fluctuating
Boltzmann noise in \eqref{unified-gives-Boltzmann} define the same
state-space noise in law.
\end{remark}

\section{Consistency of the canonical and SPDE actions}
\label{sec:rate-identification}

In this section, we compare two density-path actions associated with the
master fluctuating kinetic equation.  One is the canonical
energy--dissipation action of the generalized gradient structure, and the
other is the Freidlin-Wentzell action obtained formally from the SPDE through the weak-convergence approach.  After introducing them
separately, we prove their equality on a concrete class of regular paths by
using channel currents and Gaussian--Poisson convex duality.

We use the notation fixed in Section~\ref{sec:Psi-structure-examples}.
The entropy and transport are specified in
\eqref{kinetic-state-functional-setting} and
\eqref{kinetic-transport-domain}.  The channel spaces \(X,X^*\), the maps
\(\mathscr D,\mathscr D^*\), and their duality are defined in
\eqref{Banach-channel-space}, \eqref{channel-derivative-domain},
\eqref{channel-adjoint-domain}, and \eqref{channel-adjoint-pairing}.  The dual
potential \(\Psi_f^*\), its Legendre transform \(\Psi_f\), and the entropy
force are given in \eqref{dual-dissipation-potential-domain},
\eqref{primal-dissipation-potential}, and
\eqref{entropy-force-convention}.  When the centered cumulant is
L\'evy realizable, \(A_f\) and \(\nu_f^{\rm tilt}\) denote its Gaussian
covariance and jump measure.

\subsection{The canonical rate function and its physical meaning}
\label{subsec:canonical-action}

Generalized gradient systems carry a canonical action which measures the
failure of a path to follow the deterministic dissipative evolution.  At
each time, a channel current \(J\in X^*\) must produce the prescribed
density velocity through
\begin{align}\label{canonical-current-constraint}
 \mathscr D^*J=\partial_t f_t+\mathcal T(f_t).
\end{align}
For every path for which
the terms below are meaningful, define
\begin{align}\label{canonical-density-action}
 I_{[0,T]}^{\rm can}(f)
 :=\frac12\bigl[\mathcal E(f_T)-\mathcal E(f_0)\bigr]
 +\int_0^T
 \inf_{\substack{J\in X^*:\;\mathscr D^*J
 =\partial_t f_t+\mathcal T(f_t)}}
 \Psi_{f_t}(J)\,\d t+\int_0^T\Psi_{f_t}^*\left(-\frac12\mathscr D
 \frac{\delta\mathcal E}{\delta f}(f_t)\right)\,\d t.
\end{align}
Here the infimum is \(+\infty\) when the constraint set is empty.  The
three terms in \eqref{canonical-density-action} have a direct
thermodynamic interpretation.  The first term represents the change
of the dissipated free energy \(\mathcal E\), the second term is the least primal dissipation cost among all channel currents
that produce the observed density velocity, and the last time integral
evaluates the dual dissipation at the entropy force
\begin{align*}
 \eta_{f_t}=-\frac12\mathscr D
 \frac{\delta\mathcal E}{\delta f}(f_t).
\end{align*}

Suppose that the transport preserves \(\mathcal E\), and let \(J_t\) be any
current satisfying \eqref{canonical-current-constraint}.  Then the chain
rule and the adjoint identity give
\begin{align*}
 \frac12\bigl[\mathcal E(f_T)-\mathcal E(f_0)\bigr]
 =-\int_0^T J_t[\eta_{f_t}]\,\d t.
\end{align*}
It follows that the value associated with this current is
\begin{align*}
 \int_0^T\left(
 \Psi_{f_t}(J_t)+\Psi_{f_t}^*(\eta_{f_t})
 -J_t[\eta_{f_t}]
 \right)\,\d t\geq0,
\end{align*}
where the inequality is precisely the Fenchel--Young inequality
\begin{align*}
 \Psi_f(J)+\Psi_f^*(\eta)\geq J[\eta];
\end{align*}
see \cite[Proposition~13.13]{BauschkeCombettes}.  Although the cited result
is stated for a Hilbert-space pairing, its proof uses only the dual pairing
and applies equally to \(X^*\) and \(X\).  Equality holds if and only if
\(J_t\in\partial\Psi_{f_t}^*(\eta_{f_t})\), equivalently
\(\eta_{f_t}\in\partial\Psi_{f_t}(J_t)\).  At differentiability points,
this means \(J_t=D\Psi_{f_t}^*(\eta_{f_t})\), and the current constraint
becomes the deterministic generalized gradient equation.  Taking the
infimum over all admissible currents, we deduce that
\(I_{[0,T]}^{\rm can}\geq0\), with zero attained along the deterministic
evolution.

This variational characterization also explains the term \emph{canonical}.
The free energy, the dual pair
\((\Psi_f,\Psi_f^*)\), and the current constraint determine the action
before any stochastic realization is chosen.  Equivalently, the deterministic
generalized gradient flow is characterized by
\(I_{[0,T]}^{\rm can}(f)=0\).  This is the standard \(\Psi+\Psi^*\)
energy--dissipation principle; see
\cite[Lemma~2.1 and Eqs.~(2.2)--(2.3)]{MielkePeletierRenger} and
\cite[Eq.~(1.9)]{PattersonRengerSharma}.

The same action has a probabilistic interpretation when the generalized
gradient system arises from an underlying thermodynamic particle dynamics.
It is then the dynamical large-deviation cost of observing a density path.
The primal potential measures the cost of the current, the dual potential
normalizes the local current fluctuations, and the free-energy difference
accounts for the thermodynamic driving
\cite{BertiniMFT,PattersonRengerSharma,RengerMFTGENERIC}.  For the
Boltzmann cosh potential, the current is the empirical collision flux and
the corresponding action is the collision-path cost of the hard-sphere
dynamics studied in
\cite{BodineauGallagherSaintRaymondSimonella}.  Contracting over collision
currents gives the density-path functional; on the regular paths considered
here, its identification with the canonical action agrees with
\cite[Eq.~(1.24) and Appendix~B]{WuPoissonBoltzmann}.

\subsection{The SPDE rate function from the weak-convergence approach}
\label{subsec:formal-FW}

The stochastic-integral realization in
Theorem~\ref{thm:stochastic-integral-realization} puts the master equation
in the form used by the weak-convergence approach.  Write
\begin{align}\label{Gaussian-square-root-definition}
 \sigma_f(c):=A_f(c)^{1/2},
 \qquad A_f(c)=\sigma_f(c)\sigma_f(c)^{\mathsf T}.
\end{align}
Let \(W\) be an \(\R^m\)-valued Gaussian white-noise random measure with
control measure \(\d t\,\lambda(\d c)\).  Conditionally on the left-limit
state, let \(\mathcal N^\varepsilon\) have predictable compensator
\(\varepsilon^{-1}\d t\,
\nu_{F_{t-}^\varepsilon}^{\rm tilt}(\d c,\d z)\), and set
\begin{align}\label{state-dependent-compensated-Poisson-measure}
 \widetilde{\mathcal N}^\varepsilon(\d t,\d c,\d z)
 :=\mathcal N^\varepsilon(\d t,\d c,\d z)
 -\varepsilon^{-1}\d t\,
 \nu_{F_{t-}^\varepsilon}^{\rm tilt}(\d c,\d z).
\end{align}
The formal state-dependent stochastic equation is
\begin{align}\label{explicit-Gaussian-Poisson-master-SPDE}
 \d F_t^\varepsilon+\mathcal T(F_t^\varepsilon)\,\d t
 ={}&\mathscr D^*D\Psi_{F_t^\varepsilon}^*\left(
 -\frac12\mathscr D\frac{\delta\mathcal E}{\delta f}
 (F_t^\varepsilon)\right)\,\d t\notag\\
 &+\sqrt\varepsilon\,\mathscr D^*
 \bigl(\sigma_{F_{t-}^\varepsilon}(c)W(\d t,\d c)\bigr)+\varepsilon\mathscr D^*
 \int_{\R^m\setminus\{0\}}z\,
 \widetilde{\mathcal N}^\varepsilon(\d t,\d c,\d z).
\end{align}

Equation~\eqref{explicit-Gaussian-Poisson-master-SPDE} remains formal at
this stage, but its stochastic-integral form suggests a natural path-space
rate function through the weak-convergence approach.  Its rigorous
justification requires a solution theory of this SPDE and a
Freidlin--Wentzell large-deviation principle.  These questions depend on
the concrete kinetic model and belong to the next stages of the program.

In order to invoke the framework established in \cite{BudhirajaDupuisMaroulas}, we assume that, for every
sufficiently regular state \(f\), the jump
measure has the factorization
\begin{align}\label{common-reference-jump-measure}
 \nu_f^{\rm tilt}(\d c,\d z)
 =k_f(c,z)\,\bar\nu(\d c,\d z)
\end{align}
for a nonnegative measurable density \(k_f\) and a \(\sigma\)-finite
measure \(\bar\nu\) on
\(\mathscr C\times(\R^m\setminus\{0\})\) that is independent of \(f\). Let
\(\overline{\mathcal N}^{\varepsilon}\) be a Poisson random measure on
\([0,T]\times\mathscr C\times(\R^m\setminus\{0\})\times[0,\infty)\) with
intensity
\[
 \varepsilon^{-1}\d t\,\bar\nu(\d c,\d z)\,\d u.
\]
Then the state-dependent Poisson random measure in
\eqref{state-dependent-compensated-Poisson-measure} can be realized by
thinning:
\begin{align}\label{common-reference-thinning-realization}
 \mathcal N^\varepsilon(\d t,\d c,\d z)
 :={}&\int_0^\infty
 \boldsymbol 1_{\{u\leq k_{F_{t-}^\varepsilon}(c,z)\}}
 \,\overline{\mathcal N}^{\varepsilon}
 (\d t,\d c,\d z,\d u), 
\end{align}
and its predictable compensator is
\(\varepsilon^{-1}\d t\,
\nu_{F_{t-}^\varepsilon}^{\rm tilt}(\d c,\d z)\).  This is the enlarged
Poisson-space construction of
\cite[Section~2.1, Eq.~(2.1), and Theorem~2.1]{BudhirajaDupuisMaroulas}.
This construction puts the Poisson part of
\eqref{explicit-Gaussian-Poisson-master-SPDE} on the fixed noise space of
the weak-convergence method, then we can use the framework of
Budhiraja--Chen--Dupuis and Budhiraja--Dupuis--Maroulas formally
\cite{BudhirajaChenDupuis,BudhirajaDupuisMaroulas} to identify the expected
rate function.

The Boltzmann realization has this form with
\begin{align*}
 \bar\nu(\d c,\d z)
 &=\lambda(\d c)\bigl(\delta_{+1}+\delta_{-1}\bigr)(\d z),\\
 k_f(c,+1)&=\frac14B(c)a_f(c),
 &k_f(c,-1)&=\frac14B(c)b_f(c),
\end{align*}
which gives the tilted measure in \eqref{Boltzmann-Levy-measure}.

A predictable Poisson control \(q_t(c,z)\geq0\) replaces the indicator in
\eqref{common-reference-thinning-realization} by
\begin{align*}
 \mathcal N^{\varepsilon,q}(\d t,\d c,\d z)
 :={}&\int_0^\infty
 \boldsymbol 1_{\{u\leq
 q_t(c,z)k_{F_{t-}^\varepsilon}(c,z)\}}
 \,\overline{\mathcal N}^{\varepsilon}
 (\d t,\d c,\d z,\d u).
\end{align*}
By \eqref{common-reference-jump-measure}, its predictable compensator is
\begin{align*}
 \varepsilon^{-1}\d t\,q_t(c,z)
 k_{F_{t-}^\varepsilon}(c,z)\,\bar\nu(\d c,\d z)=
 \varepsilon^{-1}\d t\,q_t(c,z)
 \nu_{F_{t-}^\varepsilon}^{\rm tilt}(\d c,\d z).
\end{align*}
Relative to the original compensator, the controlled jump integral
decomposes formally as
\begin{align}\label{controlled-Poisson-drift-decomposition}
 &\varepsilon\int_{\mathscr C\times(\R^m\setminus\{0\})} z\left(
 \mathcal N^{\varepsilon,q}(\d t,\d c,\d z)
 -\varepsilon^{-1}\d t\,
 \nu_{F_{t-}^\varepsilon}^{\rm tilt}(\d c,\d z)\right)\notag\\
 &\quad={}
 \varepsilon\int_{\mathscr C\times(\R^m\setminus\{0\})} z\left(
 \mathcal N^{\varepsilon,q}(\d t,\d c,\d z)
 -\varepsilon^{-1}\d t\,q_t(c,z)
 \nu_{F_{t-}^\varepsilon}^{\rm tilt}(\d c,\d z)\right)\notag\\
 &\qquad{}+\d t
 \int_{\mathscr C\times(\R^m\setminus\{0\})}
 z\bigl(q_t(c,z)-1\bigr)
 k_{F_{t-}^\varepsilon}(c,z)\,\bar\nu(\d c,\d z)\notag\\
 &\quad={}
 \varepsilon\int_{\mathscr C\times(\R^m\setminus\{0\})} z\left(
 \mathcal N^{\varepsilon,q}(\d t,\d c,\d z)
 -\varepsilon^{-1}\d t\,q_t(c,z)
 \nu_{F_{t-}^\varepsilon}^{\rm tilt}(\d c,\d z)\right)\notag\\
 &\qquad{}+\d t
 \int_{\mathscr C\times(\R^m\setminus\{0\})}
 z\bigl(q_t(c,z)-1\bigr)
 \nu_{F_{t-}^\varepsilon}^{\rm tilt}(\d c,\d z).
\end{align}
The first term on the right is the controlled centered fluctuation, which
vanishes in the formal small-noise limit, and the second term becomes the
Poisson part of the skeleton current.  In the last equality,
\(k_{F_{t-}^\varepsilon}\bar\nu\) recombines into
\(\nu_{F_{t-}^\varepsilon}^{\rm tilt}\) by
\eqref{common-reference-jump-measure}.  We therefore see that the limiting current depends
only on \(\nu_f^{\rm tilt}\) and is independent of the factorization in
\eqref{common-reference-jump-measure}.

Let
\begin{align*}
 h&\in L^2([0,T]\times\mathscr C,\d t\,\lambda;\R^m),\\
 q&:[0,T]\times\mathscr C\times(\R^m\setminus\{0\})
 \longrightarrow[0,\infty)
\end{align*}
be measurable Gaussian and Poisson controls.  In the variational
representation, the Gaussian noise is shifted by
\(\varepsilon^{-1/2}h_t\,\d t\,\lambda(\d c)\), while the Poisson
intensity is multiplied by \(q_t\).  Thus the Gaussian shift contributes
\(\sigma_fh\) to the limiting current, while
\eqref{controlled-Poisson-drift-decomposition} contributes
\(z(q-1)\nu_f^{\rm tilt}\).  Thus, at a frozen state \(f\), the skeleton
current is
\begin{align}\label{formal-controlled-current}
 J_f^{h,q}[\xi]
 :={}&D\Psi_f^*\left(-\frac12\mathscr D
 \frac{\delta\mathcal E}{\delta f}(f)\right)[\xi]
 +\int_{\mathscr C}
 \xi(c)\cdot\sigma_f(c)h(c)\,\lambda(\d c)\notag\\
 &+\int_{\mathscr C\times(\R^m\setminus\{0\})}
 z\cdot\xi(c)\bigl(q(c,z)-1\bigr)
 \nu_f^{\rm tilt}(\d c,\d z).
\end{align}
The limiting controlled equation is the skeleton
\begin{align}\label{formal-controlled-skeleton}
 \partial_t f+\mathcal T(f)=\mathscr D^*J_{f_t}^{h_t,q_t},
 \qquad f(0)=f_0.
\end{align}
It is understood weakly: for every
\(\phi\in\operatorname{Dom}(\mathscr D)\),
\begin{align}\label{formal-controlled-skeleton-weak}
 \left\langle\partial_t f_t+\mathcal T(f_t),\phi\right\rangle
 =J_{f_t}^{h_t,q_t}[\mathscr D\phi].
\end{align}

The instantaneous control cost reads 
\begin{align}\label{formal-control-cost}
 \mathfrak K_f(h,q)
 :={}&\frac12\int_{\mathscr C}|h(c)|^2\,\lambda(\d c)\notag\\
 &+\int_{\mathscr C\times(\R^m\setminus\{0\})}
 \ell\bigl(q(c,z)\bigr)\,
 \nu_f^{\rm tilt}(\d c,\d z),
 \qquad \ell(r):=r\log r-r+1,\qquad 0\log0:=0.
\end{align}
The first term is the Cameron--Martin energy, and the second is the relative
entropy cost of changing the Poisson intensity from
\(\nu_f^{\rm tilt}\) to \(q\nu_f^{\rm tilt}\).

For any path \(f\) for which the terms below are defined, such a pair
\((h,q)\) is called admissible for \(f\) if
\begin{align}\label{rigorous-control-integrability}
 \int_0^T\int_{\mathscr C\times(\R^m\setminus\{0\})}
 \ell(q_t(c,z))\,\nu_{f_t}^{\rm tilt}(\d c,\d z)\,\d t<\infty,
\end{align}
if \(t\mapsto J_{f_t}^{h_t,q_t}[\mathscr D\phi]\) is integrable for every
\(\phi\in\operatorname{Dom}(\mathscr D)\), and if
\eqref{formal-controlled-skeleton-weak} holds for almost every \(t\) and
every such \(\phi\).  We denote this class by \(\mathcal U(f)\).

The Poisson variational representation is given in
\cite[Theorem~2.1]{BudhirajaDupuisMaroulas}; the corresponding abstract
weak-convergence principles are
\cite[Theorem~4.2]{BudhirajaDupuisMaroulas} for joint Brownian--Poisson
driving noise and \cite[Theorem~2.3 and Eq.~(2.4)]{BudhirajaChenDupuis} for
Poisson-driven systems.  Applying these results formally to
\eqref{explicit-Gaussian-Poisson-master-SPDE}, we obtain
\begin{align}\label{formal-FW-path-rate}
 I_{[0,T]}^{\rm SPDE}(f)
 :=\inf_{(h,q)\in\mathcal U(f)}
 \int_0^T\mathfrak K_{f_t}(h_t,q_t)\,\d t.
\end{align}
The value is \(+\infty\) if \(\mathcal U(f)=\varnothing\).  The
uncontrolled pair \(h=0\), \(q=1\) has zero cost and gives the
deterministic master equation.

The factorization \eqref{common-reference-jump-measure} serves only to
place all state-dependent jump intensities on the fixed Poisson space
required by the variational representation.  The skeleton equation 
\eqref{formal-controlled-skeleton} and the cost
\eqref{formal-control-cost} depend on the factorization only through the
product \(k_f\bar\nu=\nu_f^{\rm tilt}\), hence the candidate action
\eqref{formal-FW-path-rate} is intrinsic to \(\nu_f^{\rm tilt}\).

Formula \eqref{formal-FW-path-rate} is therefore the action predicted by
the weak-convergence method.  A full Laplace principle will require a
measurable solution map, well-posedness of the controlled equations,
compactness and continuity of the skeleton map on bounded-cost sets, and
convergence of the controlled stochastic equations to the skeleton. Instead of discussing these problems, here we only focus on the comparison of \(I_{[0,T]}^{\rm SPDE}\) defined by \eqref{formal-FW-path-rate} and \( I_{[0,T]}^{\rm can}\) defined by \eqref{canonical-density-action}. 

\subsection{Equality of the two actions on regular tilted paths}
\label{subsec:rigorous-action-comparison}

We next introduce a regular class of paths on which the two actions can be
compared rigorously.  The space \(X\) is fixed along each path, and the
classes below depend on this choice through the entropy force, the tilted
force, and the current constraint.  The chain rule required in the proof is
included in the definition.

\begin{definition}[Regular paths and regular tilted paths]
\label{def:regular-canonical-path}
Let \(\mathcal A_{\rm reg}\) be the class of strictly positive paths
\begin{align}\label{regular-canonical-path}
 f\in C^1([0,T];L^1(\mathsf Z))\cap
 C([0,T];C^\infty(\mathsf Z)),
 \qquad f(0)=f_0,
\end{align}
such that, for every \(t\in[0,T]\),
\begin{align*}
 f_t&\in\operatorname{Dom}(\mathcal E)\cap
       \operatorname{Dom}(\mathcal T),
 &\mathcal T(f)&\in C([0,T];L^1(\mathsf Z)),\\
 \frac{\delta\mathcal E}{\delta f}(f_t)&\in
       \operatorname{Dom}(\mathscr D),
 &-\frac12\mathscr D\frac{\delta\mathcal E}{\delta f}(f_t)&\in X.
\end{align*}
We require the two maps
\begin{align*}
 t&\longmapsto
 \inf_{\substack{J\in X^*:\;\mathscr D^*J
 =\partial_t f_t+\mathcal T(f_t)}}\Psi_{f_t}(J),\\
 t&\longmapsto
 \Psi_{f_t}^*\left(-\frac12\mathscr D
 \frac{\delta\mathcal E}{\delta f}(f_t)\right)
\end{align*}
to be Lebesgue measurable and to satisfy
\begin{align}\label{regular-path-action-integrability}
 \int_0^T\Bigg[
 \inf_{\substack{J\in X^*:\;\mathscr D^*J
 =\partial_t f_t+\mathcal T(f_t)}}\Psi_{f_t}(J)
 +\Psi_{f_t}^*\left(-\frac12\mathscr D
 \frac{\delta\mathcal E}{\delta f}(f_t)\right)
 \Bigg]\,\d t<\infty.
\end{align}
In particular, the current constraint is solvable for almost every time.
We also require the duality pairings
\begin{align*}
 t&\longmapsto
 \left\langle\frac{\delta\mathcal E}{\delta f}(f_t),
 \partial_t f_t\right\rangle,
 &
 t&\longmapsto
 \left\langle\frac{\delta\mathcal E}{\delta f}(f_t),
 \mathcal T(f_t)\right\rangle
\end{align*}
to be well defined and to belong to \(L^1(0,T)\).  Finally,
\(t\mapsto\mathcal E(f_t)\) is required to be absolutely continuous with
\begin{align}\label{regular-path-entropy-identities}
 \frac{\d}{\d t}\mathcal E(f_t)
 &=\left\langle\frac{\delta\mathcal E}{\delta f}(f_t),
   \partial_t f_t\right\rangle,
 &
 \left\langle\frac{\delta\mathcal E}{\delta f}(f_t),
   \mathcal T(f_t)\right\rangle&=0
\end{align}
for almost every \(t\).

A path \(f\in\mathcal A_{\rm reg}\) belongs to
\(\mathcal A_{\rm tilt}\) if there is a field
\begin{align}\label{regular-tilt-field}
 p\in C^1([0,T];C_c^\infty(\mathsf Z)),
 \qquad \mathscr Dp_t\in X,
\end{align}
with a common compact support, such that, for almost every \(t\),
\begin{align}\label{regular-tilt-domain-condition}
 -\frac12\mathscr D\frac{\delta\mathcal E}{\delta f}(f_t)
 +\mathscr Dp_t
 \in\operatorname{Dom}(D\Psi_{f_t}^*)
\end{align}
and
\begin{align}\label{smooth-Hamiltonian-tilted-equation}
 \left\langle\partial_t f_t+\mathcal T(f_t),\phi\right\rangle
 =D\Psi_{f_t}^*\left(
 -\frac12\mathscr D\frac{\delta\mathcal E}{\delta f}(f_t)
 +\mathscr Dp_t\right)[\mathscr D\phi]
\end{align}
for almost every \(t\) and every
\(\phi\in\operatorname{Dom}(\mathscr D)\).  We further require that
\begin{align*}
 t\longmapsto
 D\Psi_{f_t}^*\left(
 -\frac12\mathscr D\frac{\delta\mathcal E}{\delta f}(f_t)
 +\mathscr Dp_t\right)[\mathscr D\phi]
\end{align*}
belongs to \(L^1(0,T)\) for every such \(\phi\), and that
\begin{align}\label{regular-tilt-control-cost}
 \int_0^T\Bigg[&\frac12\int_{\mathscr C}
 \left|\sigma_{f_t}(c)^{\mathsf T}\mathscr Dp_t(c)\right|^2
 \,\lambda(\d c)\notag\\
 &+\int_{\mathscr C\times(\R^m\setminus\{0\})}
 \ell\left(e^{z\cdot\mathscr Dp_t(c)}\right)
 \,\nu_{f_t}^{\rm tilt}(\d c,\d z)\Bigg]\,\d t<\infty.
\end{align}
\end{definition}

\begin{remark}
The abstract framework developed here does not describe the structure of
\(\mathcal A_{\rm tilt}\), nor does it guarantee that this class is
nonempty.  These questions depend on the solution theory for the tilted
equation after \(\mathcal E\), \(\mathscr D\), and \(\Psi_f^*\) have been
specified.  In the Vlasov--Fokker--Planck, Landau, and Boltzmann examples,
the tilted equation is, respectively, a Fokker--Planck equation with an
additional smooth drift, a controlled Landau equation, and an exponentially
biased Boltzmann equation.  Under the corresponding classical regularity,
positivity, decay, and local well-posedness assumptions, these equations
give nontrivial regular paths in \(\mathcal A_{\rm tilt}\) for a large class
of fields satisfying \(\mathscr Dp\not\equiv0\).  
\end{remark}

The following current-level statement contains the convex argument used in
the comparison.

\begin{lemma}
\label{lem:Gaussian-Poisson-current-contraction}
Fix a strictly positive state \(f\in L_+^1(\mathsf Z)\) such that
\begin{align*}
 -\frac12\mathscr D\frac{\delta\mathcal E}{\delta f}(f)
 \in\operatorname{Dom}(D\Psi_f^*)\subset X
\end{align*}
is well defined and the centered representation
\eqref{Hessian-criterion-centered-LK} holds.  If \((h,q)\) has finite cost
and \(J_f^{h,q}\) is given by
\eqref{formal-controlled-current}, then
\begin{align}\label{current-control-cost-inequality}
 \Psi_f(J_f^{h,q})
 +\Psi_f^*\left(-\frac12\mathscr D
 \frac{\delta\mathcal E}{\delta f}(f)\right)
 -J_f^{h,q}\left[-\frac12\mathscr D
 \frac{\delta\mathcal E}{\delta f}(f)\right]
 \leq\mathfrak K_f(h,q).
\end{align}
Furthermore, let \(\xi\in X\) satisfy
\begin{align*}
 -\frac12\mathscr D\frac{\delta\mathcal E}{\delta f}(f)+\xi
 \in\operatorname{Dom}(D\Psi_f^*),
\end{align*}
and suppose that the controls below have finite cost.  Set
\begin{align}\label{Fenchel-saturating-controls}
 h^\xi(c):=\sigma_f(c)^{\mathsf T}\xi(c),
 \qquad
 q^\xi(c,z):=e^{z\cdot\xi(c)}.
\end{align}
Then
\begin{align}\label{tilted-current-derivative}
 J_f^{h^\xi,q^\xi}
 =D\Psi_f^*\left(-\frac12\mathscr D
 \frac{\delta\mathcal E}{\delta f}(f)+\xi\right),
\end{align}
and equality holds in \eqref{current-control-cost-inequality}.
\end{lemma}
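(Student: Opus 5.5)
Write \(\eta:=\eta_f=-\frac12\mathscr D\frac{\delta\mathcal E}{\delta f}(f)\) and \(J:=J_f^{h,q}\). The plan is to reduce everything to the Legendre definition \eqref{primal-dissipation-potential} of \(\Psi_f\). Since \(\Psi_f(J)=\sup_{\zeta\in X}\{J[\zeta]-\Psi_f^*(\zeta)\}\), we substitute \(\zeta=\eta+\xi\) and use the centered representation \eqref{Hessian-criterion-centered-LK}, namely \(\Psi_f^*(\eta+\xi)=\Psi_f^*(\eta)+D\Psi_f^*(\eta)[\xi]+\mathcal C_f(\xi)\). This gives
\begin{align*}
 \Psi_f(J)+\Psi_f^*(\eta)-J[\eta]
 =\sup_{\xi\in X}\Bigl\{J[\xi]-D\Psi_f^*(\eta)[\xi]-\mathcal C_f(\xi)\Bigr\}.
\end{align*}
Here the shift \(\xi\mapsto\eta+\xi\) is a bijection of \(X\) because \(\eta\in X\). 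By \eqref{formal-controlled-current}, \(J[\xi]-D\Psi_f^*(\eta)[\xi]\) equals the Gaussian term \(\int\xi\cdot\sigma_fh\,\d\lambda\) plus the Poisson term \(\int z\cdot\xi\,(q-1)\,\d\nu_f^{\rm tilt}\).

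For the inequality \eqref{current-control-cost-inequality}, we bound the supremand pointwise for each fixed \(\xi\), splitting it into Gaussian and Poisson parts. For the Gaussian part, Cauchy--Schwarz and Young give
\begin{align*}
 \xi\cdot\sigma_fh-\tfrac12\xi\cdot A_f\xi=(\sigma_f^{\mathsf T}\xi)\cdot h-\tfrac12|\sigma_f^{\mathsf T}\xi|^2\le\tfrac12|h|^2.
\end{align*}
For the Poisson part, with \(a=z\cdot\xi(c)\), the scalar Fenchel inequality for \(\ell\) gives
\begin{align*}
 a(q-1)-(e^a-1-a)=aq-(e^a-1)\le\ell(q),
\end{align*}
since \(\ell^*(a)=e^a-1\). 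We then integrate against \(\lambda\) and \(\nu_f^{\rm tilt}\), respectively. Splitting the integrals is legitimate because each piece is finite: \(\xi\in X\) satisfies \eqref{Levy-exponential-integrability}, and finite cost controls \(\int|h|^2\) and \(\int\ell(q)\). For the Poisson cross term we need \(\int|z\cdot\xi|\,|q-1|\,\d\nu_f^{\rm tilt}<\infty\). This follows from the same Young inequality applied to \(\pm|a|\), together with the exponential moments of \(\nu_f^{\rm tilt}\). Summing the two parts bounds each supremand by \(\mathfrak K_f(h,q)\), and taking the supremum proves the inequality.

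For the equality case, we fix \(\xi\) as in the statement and first compute the tilted current \eqref{tilted-current-derivative}. We differentiate \(\mathcal C_f\) at \(\xi\) in a direction \(\zeta\), using \eqref{Hessian-criterion-centered-LK}. This gives \(D\Psi_f^*(\eta+\xi)[\zeta]-D\Psi_f^*(\eta)[\zeta]=D\mathcal C_f(\xi)[\zeta]\), where
\begin{align*}
 D\mathcal C_f(\xi)[\zeta]=\int\zeta\cdot A_f\xi\,\d\lambda+\int z\cdot\zeta\,(e^{z\cdot\xi}-1)\,\d\nu_f^{\rm tilt}.
\end{align*}
Differentiation under the integral is justified by dominated convergence, using the bound \(|a|e^{|a|}\)-type domination provided by \eqref{Levy-exponential-integrability}. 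This expression is exactly \(J_f^{h^\xi,q^\xi}[\zeta]-D\Psi_f^*(\eta)[\zeta]\), since \(\sigma_fh^\xi=A_f\xi\) and \(q^\xi-1=e^{z\cdot\xi}-1\), which proves \eqref{tilted-current-derivative}.

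It then remains to evaluate both sides of \eqref{current-control-cost-inequality}. At the differentiability point \(\eta+\xi\), Fenchel--Young holds with equality: \(\Psi_f(J)=J[\eta+\xi]-\Psi_f^*(\eta+\xi)\). Hence the left side equals \(J[\xi]-D\Psi_f^*(\eta)[\xi]-\mathcal C_f(\xi)\). For the tilted controls, the pointwise Young inequalities above are equalities, because \(h=\sigma_f^{\mathsf T}\xi\) and \(q=e^a\) are exactly the maximizers. So this quantity equals \(\mathfrak K_f(h^\xi,q^\xi)\).

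The main obstacle is the integrability bookkeeping: justifying the split of the supremand into separately finite pieces, and differentiating \(\mathcal C_f\) under the integral. These are the places where the exponential-moment assumption \eqref{Levy-exponential-integrability} and the finite-cost hypothesis must be invoked carefully.
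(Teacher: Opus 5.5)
Your proposal is correct and follows the paper's proof essentially step for step. Both arguments rewrite the left side as the shifted Legendre supremum, bound it with the pointwise Gaussian and Poisson Young inequalities (which become equalities at $h^\xi$, $q^\xi$), and differentiate the centered representation in a direction to identify the tilted current. The only differences are minor. You close the equality case through Fenchel--Young equality at the differentiability point $\eta+\xi$, whereas the paper evaluates the supremand at the test field $\xi$ and combines this with the upper bound. You also supply integrability bookkeeping that the paper leaves implicit, for example $|a||q-1|\le\ell(q)+e^{|a|}-1-|a|$.
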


\begin{proof}
Fix \(\xi\in X\).  From the definition
\eqref{formal-controlled-current}, we have
\begin{align*}
 J_f^{h,q}[\xi]
 ={}&D\Psi_f^*\left(-\frac12\mathscr D
 \frac{\delta\mathcal E}{\delta f}(f)\right)[\xi]\notag\\
 &+\int_{\mathscr C}
 h(c)\cdot\sigma_f(c)^{\mathsf T}\xi(c)\,\lambda(\d c)+\int_{\mathscr C\times(\R^m\setminus\{0\})}
 \bigl(q(c,z)-1\bigr)z\cdot\xi(c)\,
 \nu_f^{\rm tilt}(\d c,\d z).
\end{align*}
The centered representation
\eqref{Hessian-criterion-centered-LK} gives
\begin{align*}
 \Psi_f^*\left(-\frac12\mathscr D
 \frac{\delta\mathcal E}{\delta f}(f)+\xi\right)
 -\Psi_f^*\left(-\frac12\mathscr D
 \frac{\delta\mathcal E}{\delta f}(f)\right)&={}
 D\Psi_f^*\left(-\frac12\mathscr D
 \frac{\delta\mathcal E}{\delta f}(f)\right)[\xi]
 +\frac12\int_{\mathscr C}
 \xi(c)\cdot A_f(c)\xi(c)\,\lambda(\d c)\\
 &\quad{}+
 \int_{\mathscr C\times(\R^m\setminus\{0\})}
 \bigl(e^{z\cdot\xi(c)}-1-z\cdot\xi(c)\bigr)
 \,\nu_f^{\rm tilt}(\d c,\d z).
\end{align*}
Since \(A_f=\sigma_f\sigma_f^{\mathsf T}\),
\begin{align*}
 \xi(c)\cdot A_f(c)\xi(c)
 =|\sigma_f(c)^{\mathsf T}\xi(c)|^2.
\end{align*}
Moreover, the jump integrands satisfy
\begin{align*}
 &\bigl(q(c,z)-1\bigr)z\cdot\xi(c)
 -\bigl(e^{z\cdot\xi(c)}-1-z\cdot\xi(c)\bigr)\\
 &\qquad={}
 q(c,z)z\cdot\xi(c)-\bigl(e^{z\cdot\xi(c)}-1\bigr).
\end{align*}
Subtracting these identities, we obtain
\begin{align*}
 &J_f^{h,q}[\xi]
 -\left\{
 \Psi_f^*\left(-\frac12\mathscr D
 \frac{\delta\mathcal E}{\delta f}(f)+\xi\right)
 -\Psi_f^*\left(-\frac12\mathscr D
 \frac{\delta\mathcal E}{\delta f}(f)\right)
 \right\}\\
 &={}
 \int_{\mathscr C}\left\{
 h(c)\cdot\sigma_f(c)^{\mathsf T}\xi(c)
 -\frac12|\sigma_f(c)^{\mathsf T}\xi(c)|^2
 \right\}\,\lambda(\d c)\\
 &\quad+
 \int_{\mathscr C\times(\R^m\setminus\{0\})}
 \left\{q(c,z)z\cdot\xi(c)
 -\bigl(e^{z\cdot\xi(c)}-1\bigr)\right\}
 \nu_f^{\rm tilt}(\d c,\d z).
\end{align*}
We now use the pointwise inequalities
\begin{align}\label{Gaussian-Poisson-Fenchel-inequalities}
 h\cdot r-\frac12|r|^2&\leq\frac12|h|^2,
 &qs-(e^s-1)&\leq\ell(q)
\end{align}
which show that this expression is bounded by \(\mathfrak K_f(h,q)\).
Therefore, for every \(\xi\in X\),
\begin{align*}
 &J_f^{h,q}[\xi]
 -\Psi_f^*\left(-\frac12\mathscr D
 \frac{\delta\mathcal E}{\delta f}(f)+\xi\right)
 +\Psi_f^*\left(-\frac12\mathscr D
 \frac{\delta\mathcal E}{\delta f}(f)\right)
 \leq\mathfrak K_f(h,q).
\end{align*}
Set
\begin{align*}
 \zeta:=-\frac12\mathscr D
 \frac{\delta\mathcal E}{\delta f}(f)+\xi.
\end{align*}
Since the entropy force belongs to \(X\), the variable \(\zeta\) ranges
over all of \(X\) as \(\xi\) ranges over \(X\).  Moreover, linearity of
\(J_f^{h,q}\in X^*\) gives
\begin{align*}
 J_f^{h,q}[\xi]
 =J_f^{h,q}[\zeta]
 -J_f^{h,q}\left[-\frac12\mathscr D
 \frac{\delta\mathcal E}{\delta f}(f)\right].
\end{align*}
Taking the supremum over \(\xi\in X\), or equivalently over
\(\zeta\in X\), therefore yields
\begin{align*}
 &\sup_{\xi\in X}\Bigg\{
 J_f^{h,q}[\xi]
 -\Psi_f^*\left(-\frac12\mathscr D
 \frac{\delta\mathcal E}{\delta f}(f)+\xi\right)
 +\Psi_f^*\left(-\frac12\mathscr D
 \frac{\delta\mathcal E}{\delta f}(f)\right)
 \Bigg\}\\
 &={}
 \Psi_f^*\left(-\frac12\mathscr D
 \frac{\delta\mathcal E}{\delta f}(f)\right)
 -J_f^{h,q}\left[-\frac12\mathscr D
 \frac{\delta\mathcal E}{\delta f}(f)\right]+
 \sup_{\zeta\in X}
 \left\{J_f^{h,q}[\zeta]-\Psi_f^*(\zeta)\right\}\\
 &={}
 \Psi_f(J_f^{h,q})
 +\Psi_f^*\left(-\frac12\mathscr D
 \frac{\delta\mathcal E}{\delta f}(f)\right)
 -J_f^{h,q}\left[-\frac12\mathscr D
 \frac{\delta\mathcal E}{\delta f}(f)\right],
\end{align*}
where the last equality follows from
\eqref{primal-dissipation-potential}.  This completes the proof of 
\eqref{current-control-cost-inequality}.

We next identify the current generated by the controls in
\eqref{Fenchel-saturating-controls}.  For an arbitrary direction
\(\rho\in X\), differentiate
\eqref{Hessian-criterion-centered-LK} at \(\xi\) in the direction \(\rho\).
The differentiability and integrability assumptions give
\begin{align*}
 &D\Psi_f^*\left(-\frac12\mathscr D
 \frac{\delta\mathcal E}{\delta f}(f)+\xi\right)[\rho]
 -D\Psi_f^*\left(-\frac12\mathscr D
 \frac{\delta\mathcal E}{\delta f}(f)\right)[\rho]\\
 &={}
 \int_{\mathscr C}\rho(c)\cdot A_f(c)\xi(c)\,\lambda(\d c)+
 \int_{\mathscr C\times(\R^m\setminus\{0\})}
 z\cdot\rho(c)\bigl(e^{z\cdot\xi(c)}-1\bigr)
 \,\nu_f^{\rm tilt}(\d c,\d z).
\end{align*}
Since \(A_f=\sigma_f\sigma_f^{\mathsf T}\) and
\begin{align*}
 h^\xi(c)=\sigma_f(c)^{\mathsf T}\xi(c),
 \qquad q^\xi(c,z)=e^{z\cdot\xi(c)},
\end{align*}
the right-hand side equals
\begin{align*}
 \int_{\mathscr C}
 \rho(c)\cdot\sigma_f(c)h^\xi(c)\,\lambda(\d c)+
 \int_{\mathscr C\times(\R^m\setminus\{0\})}
 z\cdot\rho(c)\bigl(q^\xi(c,z)-1\bigr)
 \,\nu_f^{\rm tilt}(\d c,\d z).
\end{align*}
We add
\(D\Psi_f^*(-\frac12\mathscr D
\frac{\delta\mathcal E}{\delta f}(f))[\rho]\) to both sides and use
\eqref{formal-controlled-current} to obtain
\begin{align*}
 D\Psi_f^*\left(-\frac12\mathscr D
 \frac{\delta\mathcal E}{\delta f}(f)+\xi\right)[\rho]
 =J_f^{h^\xi,q^\xi}[\rho].
\end{align*}
Since this identity holds for every \(\rho\in X\), we deduce
\eqref{tilted-current-derivative} as an identity in \(X^*\).

For the equality case, the Gaussian inequality in
\eqref{Gaussian-Poisson-Fenchel-inequalities} holds when
\(h=\sigma_f^{\mathsf T}\xi=h^\xi\), while equality in its Poisson
counterpart is an equality when \(q=e^{z\cdot\xi}=q^\xi\).  Thus the expression
before taking the supremum attains \(\mathfrak K_f(h^\xi,q^\xi)\) at the
test field \(\xi\).  The preceding supremum calculation then gives equality in
\eqref{current-control-cost-inequality} for
\((h^\xi,q^\xi)\).
\end{proof}

We also use the following form of the Fenchel--Young inequality and its
equality case.
\begin{lemma}[Fenchel--Young inequality and equality]
\label{lem:Fenchel-Young}
Fix a state \(f\), and let \(\Psi_f\) be the Legendre dual of
\(\Psi_f^*\) defined in \eqref{primal-dissipation-potential}.  Then, for
every \(J\in X^*\) and \(\xi\in X\),
\begin{align}\label{Fenchel-Young-current-pair}
 \Psi_f(J)+\Psi_f^*(\xi)\geq J[\xi].
\end{align}
If \(\Psi_f^*\) is G\^ateaux differentiable at \(\xi\), then equality
holds in \eqref{Fenchel-Young-current-pair} if and only if
\begin{align*}
 J=D\Psi_f^*(\xi).
\end{align*}
\end{lemma}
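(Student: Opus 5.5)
The inequality follows directly from the definition of the Legendre dual in \eqref{primal-dissipation-potential}. Since \(\Psi_f(J)=\sup_{\zeta\in X}\{J[\zeta]-\Psi_f^*(\zeta)\}\), taking \(\zeta=\xi\) gives \(\Psi_f(J)\geq J[\xi]-\Psi_f^*(\xi)\). When \(\Psi_f^*(\xi)<\infty\) this rearranges to \eqref{Fenchel-Young-current-pair}. When \(\Psi_f^*(\xi)=+\infty\) the left-hand side is \(+\infty\) and there is nothing to prove, because \(\Psi_f\geq0\) (take \(\zeta=0\) and use \(\Psi_f^*(0)=0\)). No further structure is needed here.

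For the equality case, I would fix \(\xi\) with \(\Psi_f^*\) Gâteaux differentiable at \(\xi\). This forces \(\xi\in\operatorname{Dom}(\Psi_f^*)\), so \(\Psi_f^*(\xi)\) is finite.

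\emph{Sufficiency.} Let \(J=D\Psi_f^*(\xi)\). I would show that \(\xi\) maximizes \(\zeta\mapsto J[\zeta]-\Psi_f^*(\zeta)\). For convex \(\Psi_f^*\), the difference quotient \(s\mapsto s^{-1}\bigl(\Psi_f^*(\xi+s(\zeta-\xi))-\Psi_f^*(\xi)\bigr)\) is nondecreasing in \(s\in(0,1]\). Its limit as \(s\downarrow0\) is \(D\Psi_f^*(\xi)[\zeta-\xi]\), and its value at \(s=1\) is \(\Psi_f^*(\zeta)-\Psi_f^*(\xi)\). Hence
\[
 \Psi_f^*(\zeta)\geq\Psi_f^*(\xi)+J[\zeta-\xi]\qquad\text{for all }\zeta\in X.
\]
This subgradient inequality is also trivially true when \(\Psi_f^*(\zeta)=+\infty\). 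It gives \(J[\zeta]-\Psi_f^*(\zeta)\leq J[\xi]-\Psi_f^*(\xi)\) for every \(\zeta\), so the supremum defining \(\Psi_f(J)\) equals \(J[\xi]-\Psi_f^*(\xi)\). That is exactly the equality case.

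\emph{Necessity.} Suppose \(\Psi_f(J)+\Psi_f^*(\xi)=J[\xi]\). Then \(\xi\) attains the supremum, so for every \(\rho\in X\) and \(s>0\),
\[
 J[\xi+s\rho]-\Psi_f^*(\xi+s\rho)\leq J[\xi]-\Psi_f^*(\xi),
\]
that is, \(s^{-1}\bigl(\Psi_f^*(\xi+s\rho)-\Psi_f^*(\xi)\bigr)\geq J[\rho]\). Letting \(s\downarrow0\) gives \(D\Psi_f^*(\xi)[\rho]\geq J[\rho]\). Replacing \(\rho\) by \(-\rho\) and using linearity of both the Gâteaux derivative and \(J\) gives the reverse inequality. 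Hence \(J[\rho]=D\Psi_f^*(\xi)[\rho]\) for all \(\rho\), so \(J=D\Psi_f^*(\xi)\) in \(X^*\).

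The main care point is in the necessity step. It needs \(\Psi_f^*(\xi+s\rho)<\infty\) for small \(s\) in every direction, and it needs the Gâteaux derivative to be a continuous linear functional. Both are built into the meaning of Gâteaux differentiability at \(\xi\) via \eqref{dual-dissipation-derivative-domain}, which places \(D\Psi_f^*(\xi)\) in \(X^*\). I would state these two facts explicitly when writing out the proof. Convexity and properness of \(\Psi_f^*\) from \eqref{dual-dissipation-potential-domain} are the only other ingredients, which matches the Hilbert-space argument of \cite[Proposition~13.13]{BauschkeCombettes} carried over to the pairing between \(X^*\) and \(X\).
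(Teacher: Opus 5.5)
Your proof is correct and follows the paper's route. The inequality comes from testing the supremum in \eqref{primal-dissipation-potential} at $\zeta=\xi$. For the equality case, the paper only cites the Fenchel--Young equality from \cite[Proposition~13.13]{BauschkeCombettes} and notes that it carries over to the $X^*$--$X$ pairing, whereas you write that argument out in full: the subgradient inequality from monotone difference quotients gives sufficiency, and the one-sided directional derivatives at the maximizer $\xi$ give necessity.
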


\begin{proof}
Inequality~\eqref{Fenchel-Young-current-pair} follows from
the definition
\begin{align*}
 \Psi_f(J)=\sup_{\zeta\in X}
 \{J[\zeta]-\Psi_f^*(\zeta)\}
\end{align*}
by choosing \(\zeta=\xi\).  The equality characterization is the
Fenchel--Young equality; see
\cite[Proposition~13.13]{BauschkeCombettes}.  The same convex-conjugacy
argument gives the stated result for the dual pairing between \(X^*\) and
\(X\).
\end{proof}

We can now prove the consistency between the canonical rate function and
the rate function predicted by the controlled SPDE.

\begin{theorem}
\label{thm:compatible-tilt-rate-equality}
Every path \(f\in\mathcal A_{\rm reg}\) satisfies
\begin{align}\label{rigorous-canonical-control-inequality}
 I_{[0,T]}^{\rm can}(f)\leq I_{[0,T]}^{\rm SPDE}(f).
\end{align}
If \(f\in\mathcal A_{\rm tilt}\) and \(p\) is a field from
Definition~\ref{def:regular-canonical-path}, then the two actions agree:
\begin{align}\label{compatible-tilt-rate-equality}
 I_{[0,T]}^{\rm can}(f)
 =I_{[0,T]}^{\rm SPDE}(f).
\end{align}
\end{theorem}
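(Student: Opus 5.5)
The plan is to compare the two actions time by time, using the chain rule to rewrite the entropy term of \(I^{\rm can}_{[0,T]}\), and then to apply Lemma~\ref{lem:Gaussian-Poisson-current-contraction} pointwise in \(t\).

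\emph{Inequality.} Fix \(f\in\mathcal A_{\rm reg}\). If \(\mathcal U(f)=\varnothing\) there is nothing to prove, so take \((h,q)\in\mathcal U(f)\) with finite cost, and write \(\eta_t:=-\frac12\mathscr D\frac{\delta\mathcal E}{\delta f}(f_t)\) and \(J_t:=J^{h_t,q_t}_{f_t}\).

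The skeleton equation \eqref{formal-controlled-skeleton-weak} says that \(\mathscr D^*J_t=\partial_tf_t+\mathcal T(f_t)\). Hence \(J_t\) is admissible in the infimum of \eqref{canonical-density-action}, and
\[
 \inf_{\mathscr D^*J=\partial_t f_t+\mathcal T(f_t)}\Psi_{f_t}(J)\le\Psi_{f_t}(J_t).
\]

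Next I rewrite the entropy term. By \eqref{regular-path-entropy-identities}, the skeleton equation and \eqref{channel-adjoint-pairing}, tested formally with \(\phi=\frac{\delta\mathcal E}{\delta f}(f_t)\in\operatorname{Dom}(\mathscr D)\),
\[
 \tfrac12\tfrac{\d}{\d t}\mathcal E(f_t)
 =\tfrac12\Bigl\langle\tfrac{\delta\mathcal E}{\delta f}(f_t),\partial_tf_t+\mathcal T(f_t)\Bigr\rangle
 =\tfrac12 J_t\Bigl[\mathscr D\tfrac{\delta\mathcal E}{\delta f}(f_t)\Bigr]
 =-J_t[\eta_t].
\]
Integrating in time, \(\frac12[\mathcal E(f_T)-\mathcal E(f_0)]=-\int_0^TJ_t[\eta_t]\,\d t\). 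The integrand is in \(L^1(0,T)\) by the integrability requirements in Definition~\ref{def:regular-canonical-path}.

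Combining the two steps gives
\[
 I^{\rm can}_{[0,T]}(f)\le\int_0^T\bigl(\Psi_{f_t}(J_t)+\Psi^*_{f_t}(\eta_t)-J_t[\eta_t]\bigr)\,\d t,
\]
and \eqref{current-control-cost-inequality} bounds the integrand by \(\mathfrak K_{f_t}(h_t,q_t)\). Taking the infimum over \(\mathcal U(f)\) yields \eqref{rigorous-canonical-control-inequality}.

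\emph{Equality.} Let \(f\in\mathcal A_{\rm tilt}\) with tilt field \(p\), and set \(\xi_t=\mathscr Dp_t\), \(h^p_t=\sigma_{f_t}^{\mathsf T}\xi_t\) and \(q^p_t=e^{z\cdot\xi_t}\).

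First, the second part of Lemma~\ref{lem:Gaussian-Poisson-current-contraction} gives \(J^{h^p_t,q^p_t}_{f_t}=D\Psi^*_{f_t}(\eta_t+\xi_t)\). Together with \eqref{smooth-Hamiltonian-tilted-equation}, this shows that the skeleton equation holds. The cost is finite by \eqref{regular-tilt-control-cost} and the \(L^1\) requirement is part of the definition, so \((h^p,q^p)\in\mathcal U(f)\).

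Second, write \(\hat J_t:=D\Psi^*_{f_t}(\eta_t+\xi_t)\). The equality case of \eqref{current-control-cost-inequality} gives
\[
 \Psi_{f_t}(\hat J_t)+\Psi^*_{f_t}(\eta_t)-\hat J_t[\eta_t]=\mathfrak K_{f_t}(h^p_t,q^p_t).
\]

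Third, I must show that \(\hat J_t\) minimizes \(\Psi_{f_t}\) over the constraint set, which I expect to be the main step. Let \(J\) be any current with \(\mathscr D^*J=\mathscr D^*\hat J_t\). Then \(J[\mathscr D\phi]=\hat J_t[\mathscr D\phi]\) for every \(\phi\in\operatorname{Dom}(\mathscr D)\), and in particular \(J[\xi_t]=\hat J_t[\xi_t]\) and \(J[\eta_t]=\hat J_t[\eta_t]\), since both fields are images under \(\mathscr D\).

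Lemma~\ref{lem:Fenchel-Young} then gives
\[
 \Psi_{f_t}(J)\ge J[\eta_t+\xi_t]-\Psi^*_{f_t}(\eta_t+\xi_t)
 =\hat J_t[\eta_t+\xi_t]-\Psi^*_{f_t}(\eta_t+\xi_t)=\Psi_{f_t}(\hat J_t),
\]
where the last equality is the equality case of Lemma~\ref{lem:Fenchel-Young}, valid because \(\Psi^*_{f_t}\) is differentiable at \(\eta_t+\xi_t\) by \eqref{regular-tilt-domain-condition}. This argument works precisely because the tilted force \(\eta_t+\xi_t\) lies in the range of \(\mathscr D\); it would fail for a general \(\xi\), which is why \(\mathcal A_{\rm tilt}\) uses \(\xi_t=\mathscr Dp_t\).

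Finally, substituting \(\hat J_t\) into the chain-rule computation of the first part turns every inequality into an equality. This gives
\[
 I^{\rm can}_{[0,T]}(f)=\int_0^T\mathfrak K_{f_t}(h^p_t,q^p_t)\,\d t\ge I^{\rm SPDE}_{[0,T]}(f),
\]
and the first part supplies the reverse inequality.

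The main technical care is in the chain-rule step. It needs \(\frac{\delta\mathcal E}{\delta f}(f_t)\) to be a legitimate test function in \eqref{formal-controlled-skeleton-weak}, and the transport term to vanish by \eqref{regular-path-entropy-identities}. I would take these from the requirements \(\frac{\delta\mathcal E}{\delta f}(f_t)\in\operatorname{Dom}(\mathscr D)\) and the pairing identities in Definition~\ref{def:regular-canonical-path}, and I would check measurability in \(t\) of all terms against \eqref{regular-path-action-integrability}.
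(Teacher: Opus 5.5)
Your proposal is correct and matches the paper's proof essentially step for step. For the inequality, you use the skeleton current $J_t$ as a competitor, rewrite $\frac12[\mathcal E(f_T)-\mathcal E(f_0)]=-\int_0^TJ_t[\eta_t]\,\d t$ via the chain rule and the adjoint identity, and apply Lemma~\ref{lem:Gaussian-Poisson-current-contraction}; for equality, you use the saturating controls $(h^p,q^p)$, the annihilation identity on the constraint set together with Lemma~\ref{lem:Fenchel-Young} to show that $D\Psi_{f_t}^*(\eta_t+\mathscr Dp_t)$ minimizes $\Psi_{f_t}$, and the equality case of the contraction lemma, exactly as the paper does.
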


\begin{proof}
Let \((h,q)\in\mathcal U(f)\) and put
\(J_t=J_{f_t}^{h_t,q_t}\).  The skeleton equation gives
\(\mathscr D^*J_t=\partial_t f_t+\mathcal T(f_t)\).  By
\eqref{regular-path-entropy-identities} and the adjoint identity
\eqref{channel-adjoint-pairing},
\begin{align}\label{entropy-endpoint-current-identity}
 \frac12\bigl[\mathcal E(f_T)-\mathcal E(f_0)\bigr]
 =-\int_0^T J_t\left[-\frac12\mathscr D
 \frac{\delta\mathcal E}{\delta f}(f_t)\right]\,\d t.
\end{align}
Using \(J_t\) as a competitor in \eqref{canonical-density-action} and
applying Lemma~\ref{lem:Gaussian-Poisson-current-contraction}, we obtain
\begin{align*}
 I_{[0,T]}^{\rm can}(f)
 \leq\int_0^T\left\{
 \Psi_{f_t}(J_t)
 +\Psi_{f_t}^*\left(-\frac12\mathscr D
 \frac{\delta\mathcal E}{\delta f}(f_t)\right)
 -J_t\left[-\frac12\mathscr D
 \frac{\delta\mathcal E}{\delta f}(f_t)\right]
 \right\}\,\d t\leq\int_0^T\mathfrak K_{f_t}(h_t,q_t)\,\d t.
\end{align*}
Taking the infimum over \((h,q)\in\mathcal U(f)\) proves
\eqref{rigorous-canonical-control-inequality}.

Let now \(f\in\mathcal A_{\rm tilt}\).  For the field \(p\) in
Definition~\ref{def:regular-canonical-path}, define the Gaussian and
Poisson controls
\begin{align}\label{compatible-Hamiltonian-controls}
 h_t^p(c):=\sigma_{f_t}(c)^{\mathsf T}\mathscr Dp_t(c),\qquad q_t^p(c,z):=\exp\{z\cdot\mathscr Dp_t(c)\}.
\end{align}
Condition~\eqref{regular-tilt-control-cost} gives
\begin{align*}
 \int_0^T\mathfrak K_{f_t}(h_t^p,q_t^p)\,\d t<\infty.
\end{align*}
By Lemma~\ref{lem:Gaussian-Poisson-current-contraction}, applied with
\(\xi=\mathscr Dp_t\),
\begin{align*}
 J_{f_t}^{h_t^p,q_t^p}
 =D\Psi_{f_t}^*\left(
 -\frac12\mathscr D\frac{\delta\mathcal E}{\delta f}(f_t)
 +\mathscr Dp_t\right)
\end{align*}
for almost every \(t\).  It follows from
\eqref{smooth-Hamiltonian-tilted-equation} and the integrability condition
in Definition~\ref{def:regular-canonical-path} that
\begin{align*}
 \left\langle\partial_t f_t+\mathcal T(f_t),\phi\right\rangle
 =J_{f_t}^{h_t^p,q_t^p}[\mathscr D\phi]
\end{align*}
for almost every \(t\) and every
\(\phi\in\operatorname{Dom}(\mathscr D)\), and hence
\((h^p,q^p)\in\mathcal U(f)\).  We claim that the current
\begin{align*}
 D\Psi_{f_t}^*\left(
 -\frac12\mathscr D\frac{\delta\mathcal E}{\delta f}(f_t)
 +\mathscr Dp_t\right)
\end{align*}
minimizes \(\Psi_{f_t}\) under the constraint
\(\mathscr D^*J=\partial_t f_t+\mathcal T(f_t)\).  If \(J\) is
any other feasible current, then
\begin{align*}
 &\left\{J-D\Psi_{f_t}^*\left(
 -\frac12\mathscr D\frac{\delta\mathcal E}{\delta f}(f_t)
 +\mathscr Dp_t\right)\right\}
 \left[-\frac12\mathscr D
 \frac{\delta\mathcal E}{\delta f}(f_t)+\mathscr Dp_t\right]=0
\end{align*}
by \eqref{channel-adjoint-pairing}.  Applying
Lemma~\ref{lem:Fenchel-Young} to \(J\) and to the displayed current gives
\begin{align*}
 \Psi_{f_t}(J)
 &\geq J\left[-\frac12\mathscr D
 \frac{\delta\mathcal E}{\delta f}(f_t)+\mathscr Dp_t\right]
 -\Psi_{f_t}^*\left(-\frac12\mathscr D
 \frac{\delta\mathcal E}{\delta f}(f_t)+\mathscr Dp_t\right)\\
 &=D\Psi_{f_t}^*\left(
 -\frac12\mathscr D\frac{\delta\mathcal E}{\delta f}(f_t)
 +\mathscr Dp_t\right)
 \left[-\frac12\mathscr D
 \frac{\delta\mathcal E}{\delta f}(f_t)+\mathscr Dp_t\right]-\Psi_{f_t}^*\left(-\frac12\mathscr D
 \frac{\delta\mathcal E}{\delta f}(f_t)+\mathscr Dp_t\right)\\
 &=\Psi_{f_t}\left(D\Psi_{f_t}^*\left(
 -\frac12\mathscr D\frac{\delta\mathcal E}{\delta f}(f_t)
 +\mathscr Dp_t\right)\right).
\end{align*}
The first equality follows from the preceding annihilation identity, and
the last one is the equality case in
Lemma~\ref{lem:Fenchel-Young}, since the displayed current is the
derivative of \(\Psi_{f_t}^*\) at the shifted force.
We deduce that this current attains the pointwise infimum in
\eqref{canonical-density-action}.  Lemma~
\ref{lem:Gaussian-Poisson-current-contraction}, with
\(\xi=\mathscr Dp_t\), also shows that
\begin{align*}
 \Psi_{f_t}\left(D\Psi_{f_t}^*\left(
 -\frac12\mathscr D\frac{\delta\mathcal E}{\delta f}(f_t)
 +\mathscr Dp_t\right)\right)
 &+\Psi_{f_t}^*\left(-\frac12\mathscr D
 \frac{\delta\mathcal E}{\delta f}(f_t)\right)
 \\
 &-D\Psi_{f_t}^*\left(
 -\frac12\mathscr D\frac{\delta\mathcal E}{\delta f}(f_t)
 +\mathscr Dp_t\right)\left[-\frac12\mathscr D
 \frac{\delta\mathcal E}{\delta f}(f_t)\right]
 =\mathfrak K_{f_t}(h_t^p,q_t^p).
\end{align*}
Integrating in time and using
\eqref{entropy-endpoint-current-identity}, we obtain
\begin{align*}
 I_{[0,T]}^{\rm can}(f)
 =\int_0^T\mathfrak K_{f_t}(h_t^p,q_t^p)\,\d t.
\end{align*}
By the definition of \(I^{\rm SPDE}\), it is no larger than the right-hand
side.  Together with the opposite inequality proved above, this gives the
claimed equality and also shows that
\((h^p,q^p)\) minimizes the SPDE control cost.
\end{proof}

For the Boltzmann cosh potential there is no Gaussian part, and the
controls in \eqref{compatible-Hamiltonian-controls} reduce to
\begin{align}\label{Boltzmann-compatible-two-sided-controls}
 q_t^p(c,+1)=e^{\mathscr D_{\mathrm B}p_t(c)},
 \qquad
 q_t^p(c,-1)=e^{-\mathscr D_{\mathrm B}p_t(c)}.
\end{align}
The pre--post-collisional involution turns the two-sided control into the
one-sided collision-rate multiplier
\[
 q_t(c)=e^{\mathscr D_{\mathrm B}p_t(c)}.
\]
This is the control used in
\cite[Appendix~B]{WuPoissonBoltzmann}.  Theorem~
\ref{thm:compatible-tilt-rate-equality} therefore recovers, on the same
smooth exponentially tilted class (except the regularity conditions on $p$ are slightly different), the equality between the canonical
density action and the Poisson control action associated with
\cite[Eq.~(1.24)]{WuPoissonBoltzmann}.

\section{Hamiltonian consistency from fluctuating Boltzmann to LLNS}
\label{sec:Boltzmann-LLNS}

In this section, we study the stochastic hydrodynamic limit from the
fluctuating Boltzmann equation to the Landau--Lifshitz--Navier--Stokes
equations through the convergence of their Hamiltonians.  This point of
view is inspired by the Hamiltonian approach of Feng, Mikami, and Zimmer to
hydrodynamic fluctuations \cite{FengMikamiZimmer2021}.  We begin with the
two mesoscopic models and the roles of the kinetic fluctuation parameter
\(\varepsilon\), the hydrodynamic parameter \(\delta\), and the limiting
noise amplitude \(\mathfrak a\).  

\subsection{The two mesoscopic models}

We work on \(\T^3_x\times\R^3_v\) around the normalized Maxwellian
\begin{align*}
 M(v):=(2\pi)^{-3/2}e^{-|v|^2/2}.
\end{align*}
For velocity functions and space--velocity functions, respectively, set
\begin{align}\label{Maxwellian-pairings}
 \langle h,k\rangle_M
 &:=\int_{\R^3}h(v)k(v)M(v)\,\d v,\notag\\
 ((p,q))_M
 &:=\int_{\T^3}\langle p(x,\cdot),q(x,\cdot)\rangle_M\,\d x.
\end{align}
The collision-channel space used throughout this section is
\begin{align}\label{section-five-collision-space}
 \mathscr C_{\rm B}:=
 \T^3_x\times\R^3_v\times\R^3_{v_*}\times\mathbb S^2,
 \qquad \d c:=\d x\,\d v\,\d v_*\,\d\sigma.
\end{align}
We use the following normalization of the Boltzmann operator:
\begin{align}\label{Boltzmann-weak-normalization}
 \int_{\R^3}Q_{\rm B}(f,f)(v)p(v)\,\d v
 =\frac12\int_{\R^3\times\R^3\times\mathbb S^2}
 Bff_*\,\mathscr D_{\rm B}p\,\d v\,\d v_*\,\d\sigma.
\end{align}
Here and below, microreversibility means the following two integral
identities.  For every nonnegative measurable function
\(G:\mathscr C_{\rm B}\to[0,\infty]\),
\begin{align}\label{collision-measure-microreversibility}
 \int_{\mathscr C_{\rm B}}
 G(x,v,v_*,\sigma)B(x,v,v_*,\sigma)\,
 \d x\,\d v\,\d v_*\,\d\sigma&=\int_{\mathscr C_{\rm B}}
 G(x,v_*,v,-\sigma)B(x,v,v_*,\sigma)\,
 \d x\,\d v\,\d v_*\,\d\sigma\notag\\
 &=\int_{\mathscr C_{\rm B}}
 G\left(x,v',v_*',\frac{v-v_*}{|v-v_*|}\right)
 B(x,v,v_*,\sigma)\,
 \d x\,\d v\,\d v_*\,\d\sigma,
\end{align}
where
\[
 v'=\frac{v+v_*}{2}+\frac{|v-v_*|}{2}\sigma,
 \qquad
 v_*'=\frac{v+v_*}{2}-\frac{|v-v_*|}{2}\sigma.
\]
An arbitrary value may be assigned to
\((v-v_*)/|v-v_*|\) on the null set \(v=v_*\).  

We begin with the deterministic scaling.  For each \(\delta\in(0,1)\), the
diffusive incompressible rescaling of the Boltzmann equation is
\begin{align*}
 \partial_tF^\delta+\frac1\delta v\cdot\nabla_xF^\delta
 =\frac1{\delta^2}Q_{\rm B}(F^\delta,F^\delta). 
\end{align*}
This is the classical scaling
under which the Boltzmann equation converges to the incompressible
Navier--Stokes equations; see, for example,
\cite{GolseSaintRaymondNSLimit}.  On the macroscopic time scale, free
transport is therefore accelerated by \(\delta^{-1}\), while the collision
frequency is accelerated by \(\delta^{-2}\).

For the fluctuating equation, the collision clock is accelerated in the
same way.  Before hydrodynamic rescaling, one effective
collision has jump size \(\varepsilon\mathcal J_c\) and accepted rate
\((2\varepsilon)^{-1}B(c)F(c)F_*(c)\,\d c\).  The diffusive scaling
multiplies this rate by \(\delta^{-2}\), while the mass \(\varepsilon\)
transferred by one event remains unchanged.  Thus the rescaling accelerates
the Poisson intensity and leaves the jump size unchanged.

Here \(\delta\) sets the hydrodynamic scale, while \(\varepsilon\) records
the effective particle mass in the normalized kinetic field and controls
the size of its fluctuations.  Since they describe different
approximations, we keep them independent until the joint limit is imposed.

The stochastic normalization follows by accelerating every collision clock
by
\(\delta^{-2}\).  A Poisson random measure with base intensity
\(\delta^{-2}\d t\,\lambda(\d c)\,\d u\), thinned at
\[
 u\leq\frac1{2\varepsilon}B(c)F(c)F_*(c),
\]
has the same accepted intensity as a
Poisson random measure with base intensity
\(\d t\,\lambda(\d c)\,\d u\), thinned at
\begin{align*}
 u\leq\frac1{2\varepsilon\delta^2}B(c)F(c)F_*(c).
\end{align*}
Both accepted intensities equal
\[
 \frac1{2\varepsilon\delta^2}
 B(c)F(c)F_*(c)\,\d t\,\lambda(\d c).
\]
We thus multiply the number
of collision events per unit macroscopic time by \(\delta^{-2}\), while
each accepted event continues to change the kinetic field by
\(\varepsilon\mathcal J_c\). Then we derive the rescaled fluctuating Boltzmann equation
\begin{align}\label{rescaled-fluctuating-Boltzmann-LLNS}
 \d F_t^{\delta,\varepsilon}
 +\frac1\delta v\cdot\nabla_xF_t^{\delta,\varepsilon}\,\d t
 ={}&\frac1{\delta^2}Q_{\rm B}
 (F_t^{\delta,\varepsilon},F_t^{\delta,\varepsilon})\,\d t\notag\\
 &+\varepsilon\int_{\mathscr C_{\rm B}\times[0,\infty)}
 \mathcal J_c
 \boldsymbol1_{\{u\leq(2\varepsilon\delta^2)^{-1}
 BF_{t-}^{\delta,\varepsilon}F_{t-,*}^{\delta,\varepsilon}\}}
 \widetilde{\mathcal N}(\d t,\d c,\d u).
\end{align}
Here \(\widetilde{\mathcal N}\) is the compensated Poisson random measure
associated with a Poisson random measure of intensity
\(\d t\,\d c\,\d u\) on
\([0,\infty)\times\mathscr C_{\rm B}\times[0,\infty)\).

For a fluctuation amplitude \(\mathfrak a\geq0\), the target velocity equations are the incompressible
Landau--Lifshitz--Navier--Stokes equations
\begin{align}\label{LLNS-model}
 \d u+\Pi\nabla_x\cdot(u\otimes u)\,\d t
 =\mu\Delta u\,\d t
 -\mathfrak a\sqrt{2\mu}\,
 \Pi\nabla_x\cdot\d W^{\rm F},
 \qquad \nabla_x\cdot u=0,
\end{align}
Here \(\Pi\) is the Helmholtz projection and \(W^{\rm F}\) is symmetric
trace-free space--time white noise.  The two tensor divergences are defined
componentwise by
\[
 \bigl(\nabla_x\cdot(u\otimes u)\bigr)_i
 :=\sum_{j=1}^3\partial_j(u_i u_j),
 \qquad
 \bigl(\nabla_x\cdot W^{\rm F}\bigr)_i
 :=\sum_{j=1}^3\partial_jW^{\rm F}_{ij},
 \qquad 1\leq i\leq3.
\]
The covariance of \(W^{\rm F}\) is
\begin{align}\label{LLNS-stress-covariance}
 \E[\dot W^{\rm F}_{ij}(t,x)\dot W^{\rm F}_{kl}(s,y)]
 =\mathsf C_{ij,kl}\delta(t-s)\delta(x-y),
 \qquad 1\leq i,j,k,l\leq3,
\end{align}
with
\begin{align}\label{trace-free-isotropy-tensor}
 \mathsf C_{ij,kl}
 :=\delta_{ik}\delta_{jl}+\delta_{il}\delta_{jk}
 -\frac23\delta_{ij}\delta_{kl},
 \qquad 1\leq i,j,k,l\leq3.
\end{align}

We study \eqref{rescaled-fluctuating-Boltzmann-LLNS} through its  exponential-generator Hamiltonian, both of which can be
defined from the formal It\^o's formula, and these Hamiltonians
record the deterministic drifts together with all collision cumulants.  In
the following, we restrict it to the local-equilibrium manifold, add the
standard stress correction, and prove convergence to the Hamiltonian of
\eqref{LLNS-model}.  

\subsection{The corrected Boltzmann and LLNS Hamiltonians}

Here the Hamiltonian is the local cumulant-generating function of the
increments of a Markov process.  We give its precise meaning before using
the corresponding algebraic object for the fluctuating Boltzmann equation.
Let \(E\) be a Polish state space and let
\((X_t)_{t\geq0}\) be an \(E\)-valued time-homogeneous Markov process with
transition semigroup \((P_t)_{t\geq0}\).  Suppose that this semigroup is
strongly continuous on a Banach space \(\mathcal B\) of bounded measurable
observables.  Thus
\begin{align*}
 P_hG(x):=\E_x[G(X_h)],
 \qquad G\in\mathcal B,\quad x\in E,
\end{align*}
where \(\E_x\) denotes expectation conditional on \(X_0=x\).  Its linear
generator
\(\mathcal L:\operatorname{Dom}(\mathcal L)\subset\mathcal B
\to\mathcal B\) is defined by
\begin{align*}
 \mathcal LG
 :=\lim_{h\downarrow0}\frac{P_hG-G}h,
 \qquad G\in\operatorname{Dom}(\mathcal L),
\end{align*}
where the limit is taken in \(\mathcal B\).  See
\cite[Chapter~1]{EthierKurtzMarkov} for the standard generator and
martingale-problem framework.

Let \(\Phi:E\to\R\) be a real-valued test observable such that
\(e^\Phi\in\operatorname{Dom}(\mathcal L)\) and the exponential moment
below is finite.  The same definition applies through the extended
generator when \(\Phi\) is unbounded and the required local integrability
holds.  The exponential, or nonlinear, generator at \(x\in E\) is
\begin{align}\label{exponential-generator-general-definition}
 \mathcal H(x,\Phi)
 :=e^{-\Phi(x)}\mathcal L e^\Phi(x)
 =\lim_{h\downarrow0}\frac1h
 \log\E_x\exp\{\Phi(X_h)-\Phi(x)\}.
\end{align}
To verify the second identity, we use the expansion
\begin{align*}
 \E_x\exp\{\Phi(X_h)-\Phi(x)\}
 &=e^{-\Phi(x)}P_he^\Phi(x)\\
 &=1+h e^{-\Phi(x)}\mathcal Le^\Phi(x)+o(h).
\end{align*}
Applying \(\log(1+r)=r+o(r)\) gives the second identity in
\eqref{exponential-generator-general-definition}. 

This nonlinear transform of a Markov generator is standard in the
Hamilton--Jacobi approach to large deviations, see
\cite[Chapters~1 and~5]{FengKurtzLargeDeviations}.  The generator
determines \(\mathcal H\), and conversely, on a common test algebra on which
\(\Phi\in\operatorname{Dom}(\mathcal L)\),
\(e^{\alpha\Phi}\in\operatorname{Dom}(\mathcal L)\) for \(\alpha\) near
zero, and differentiation through \(\mathcal L\) is valid,
\begin{align}\label{recover-generator-from-Hamiltonian}
 \mathcal L\Phi(x)
 =\left.\frac{\d}{\d\alpha}
 \mathcal H(x,\alpha\Phi)\right|_{\alpha=0}.
\end{align}
Using the definition of the nonlinear generator and the linearity of
\(\mathcal L\), we have
\begin{align*}
 \frac{\d}{\d\alpha}\mathcal H(x,\alpha\Phi)
 &={\frac{\d}{\d\alpha}}
 \left[e^{-\alpha\Phi(x)}
 \mathcal L\bigl(e^{\alpha\Phi}\bigr)(x)\right]\\
 &=-\Phi(x)e^{-\alpha\Phi(x)}
 \mathcal L\bigl(e^{\alpha\Phi}\bigr)(x)
 +e^{-\alpha\Phi(x)}
 \mathcal L\bigl(\Phi e^{\alpha\Phi}\bigr)(x).
\end{align*}
At \(\alpha=0\), this becomes
\[
 \left.\frac{\d}{\d\alpha}
 \mathcal H(x,\alpha\Phi)\right|_{\alpha=0}
 =-\Phi(x)\mathcal L1(x)+\mathcal L\Phi(x).
\]
Since \((P_t)_{t\geq0}\) is a conservative Markov semigroup,
\(P_t1=1\), and therefore \(\mathcal L1=0\).  This proves
\eqref{recover-generator-from-Hamiltonian}.
It follows that the linear Markov generator \(\mathcal L\) can be recovered from the
exponential-generator Hamiltonian \(\mathcal H\), provided that
\(\mathcal H(x,\alpha\Phi)\) is known for a sufficiently rich class of
observables and for \(\alpha\) near zero.

We now specify the test class used for the kinetic state.  Let
\(\mathcal M(\mathsf Z_{\rm B})\) denote the vector space of finite signed
Radon measures on \(\mathsf Z_{\rm B}=\T^3\times\R^3\).  We identify each
\(F\in L_+^1(\mathsf Z_{\rm B})\) with the measure
\(F(x,v)\,\d x\,\d v\).  For
\(c=(x,v,v_*,\sigma)\), the collision increment is the finite signed
measure
\[
 \mathcal J_c
 :=\delta_{(x,v')}+\delta_{(x,v_*')}
   -\delta_{(x,v)}-\delta_{(x,v_*)}
 \in\mathcal M(\mathsf Z_{\rm B}).
\]
In particular, \(\mathcal J_c\) does not have an \(L^1\)-density.  A
cylindrical observable is a map
\[
 \Gamma:\mathcal M(\mathsf Z_{\rm B})\longrightarrow\R
\]
of the form
\[
 \Gamma(F)
 =\gamma\bigl(\langle F,\phi_1\rangle,\ldots,
                  \langle F,\phi_N\rangle\bigr),
\]
where \(N\in\mathbb N\),
\(\gamma\in C_b^1(\R^N)\), and
\(\phi_1,\ldots,\phi_N\in C_c^\infty(\mathsf Z_{\rm B})\).  For
\(F,H\in\mathcal M(\mathsf Z_{\rm B})\), its directional differential is
\[
 D\Gamma(F)[H]
 =\sum_{r=1}^N
 \partial_r\gamma\bigl(\langle F,\phi_1\rangle,\ldots,
                         \langle F,\phi_N\rangle\bigr)
 \langle H,\phi_r\rangle.
\]
Equivalently,
\[
 D\Gamma(F)=\sum_{r=1}^N
 \partial_r\gamma\bigl(\langle F,\phi_1\rangle,\ldots,
                         \langle F,\phi_N\rangle\bigr)\phi_r.
\]
The generator below is evaluated at sufficiently regular nonnegative
densities \(F\in L_+^1(\mathsf Z_{\rm B})\) for which its collision
integrals are finite.  Since
\[
 \langle\mathcal J_c,\phi_r\rangle
 =\mathscr D_{\rm B}\phi_r(c)
 =\phi_r(x,v')+\phi_r(x,v_*')
  -\phi_r(x,v)-\phi_r(x,v_*),
\]
a collision changes this observable by
\begin{align*}
 \Gamma(F+\varepsilon\mathcal J_c)-\Gamma(F)=\gamma\Bigl(
 \langle F,\phi_1\rangle+\varepsilon\mathscr D_{\rm B}\phi_1(c),
 \ldots,
 \langle F,\phi_N\rangle+\varepsilon\mathscr D_{\rm B}\phi_N(c)
 \Bigr)
 -\gamma\bigl(\langle F,\phi_1\rangle,\ldots,
                \langle F,\phi_N\rangle\bigr).
\end{align*}

The resulting algebraic generator is
\begin{align}\label{rescaled-Boltzmann-generator}
 (\mathcal L_{\delta,\varepsilon}^{\rm B}\Gamma)(F)
 :={}&-\frac1\delta
 \langle v\cdot\nabla_xF,D\Gamma(F)\rangle\notag\\
 &+\frac1{2\varepsilon\delta^2}
 \int_{\mathscr C_{\rm B}}BFF_*
 \bigl[\Gamma(F+\varepsilon\mathcal J_c)-\Gamma(F)\bigr]\,\d c.
\end{align}
This is the standard generator of a process with deterministic transport
and state-dependent jumps, we derive it from the compensated formulation
in \eqref{rescaled-fluctuating-Boltzmann-LLNS}.  In the It\^o formula for
Banach-space-valued stochastic integrals driven by a compensated Poisson
random measure, see \cite[formula~(3)]{RudigerZiglioIto}, take the jump
integrand to be
\[
 G_{\delta,\varepsilon}(F;c,u)
 :=\varepsilon\mathcal J_c
 \boldsymbol1_{\{u\leq(2\varepsilon\delta^2)^{-1}B(c)F(c)F_*(c)\}}.
\]
Applying this formula formally to
\eqref{rescaled-fluctuating-Boltzmann-LLNS}, we obtain
\begin{align*}
 \d\Gamma(F_t^{\delta,\varepsilon})
 ={}&D\Gamma(F_{t-}^{\delta,\varepsilon})\left[
 -\frac1\delta v\cdot\nabla_xF_{t-}^{\delta,\varepsilon}
 +\frac1{\delta^2}Q_{\rm B}
 (F_{t-}^{\delta,\varepsilon},F_{t-}^{\delta,\varepsilon})\right]\,\d t\\
 &+\int_{\mathscr C_{\rm B}\times[0,\infty)}
 \left[\Gamma\bigl(F_{t-}^{\delta,\varepsilon}
 +G_{\delta,\varepsilon}(F_{t-}^{\delta,\varepsilon};c,u)\bigr)
 -\Gamma(F_{t-}^{\delta,\varepsilon})\right]
 \widetilde{\mathcal N}(\d t,\d c,\d u)\\
 &+\int_{\mathscr C_{\rm B}\times[0,\infty)}
 \Bigl[\Gamma\bigl(F_{t-}^{\delta,\varepsilon}
 +G_{\delta,\varepsilon}(F_{t-}^{\delta,\varepsilon};c,u)\bigr)
 -\Gamma(F_{t-}^{\delta,\varepsilon})\notag\\
 &\hspace{43mm}
 -D\Gamma(F_{t-}^{\delta,\varepsilon})
 \bigl[G_{\delta,\varepsilon}
 (F_{t-}^{\delta,\varepsilon};c,u)\bigr]\Bigr]
 \,\d c\,\d u\,\d t.
\end{align*}
The last integrand vanishes above the thinning threshold and, below the
threshold, becomes
\[
 \Gamma(F_{t-}^{\delta,\varepsilon}+\varepsilon\mathcal J_c)
 -\Gamma(F_{t-}^{\delta,\varepsilon})
 -\varepsilon D\Gamma(F_{t-}^{\delta,\varepsilon})[\mathcal J_c].
\]
Integrating in \(u\), we have
\begin{align*}
 &\int_0^\infty
 \Bigl[\Gamma\bigl(F_{t-}^{\delta,\varepsilon}
 +G_{\delta,\varepsilon}(F_{t-}^{\delta,\varepsilon};c,u)\bigr)
 -\Gamma(F_{t-}^{\delta,\varepsilon})
 -D\Gamma(F_{t-}^{\delta,\varepsilon})
 \bigl[G_{\delta,\varepsilon}
 (F_{t-}^{\delta,\varepsilon};c,u)\bigr]\Bigr]\,\d u\\
 &\qquad={}
 \frac{B(c)F_{t-}^{\delta,\varepsilon}(c)
 F_{t-,*}^{\delta,\varepsilon}(c)}{2\varepsilon\delta^2}
 \left[\Gamma(F_{t-}^{\delta,\varepsilon}+\varepsilon\mathcal J_c)
 -\Gamma(F_{t-}^{\delta,\varepsilon})
 -\varepsilon D\Gamma(F_{t-}^{\delta,\varepsilon})[\mathcal J_c]\right].
\end{align*}
We thus obtain the following formal drift of
\(\Gamma(F_t^{\delta,\varepsilon})\):
\begin{align*}
 &D\Gamma(F_{t-}^{\delta,\varepsilon})\left[
 -\frac1\delta v\cdot\nabla_xF_{t-}^{\delta,\varepsilon}
 +\frac1{\delta^2}Q_{\rm B}
 (F_{t-}^{\delta,\varepsilon},F_{t-}^{\delta,\varepsilon})\right]\\
 &\quad+\frac1{2\varepsilon\delta^2}
 \int_{\mathscr C_{\rm B}}B
 F_{t-}^{\delta,\varepsilon}F_{t-,*}^{\delta,\varepsilon}
 \left[
 \Gamma(F_{t-}^{\delta,\varepsilon}+\varepsilon\mathcal J_c)
 -\Gamma(F_{t-}^{\delta,\varepsilon})
 -\varepsilon D\Gamma(F_{t-}^{\delta,\varepsilon})[\mathcal J_c]
 \right]\,\d c.
\end{align*}
The second line is the usual compensation term in the generator of a jump
semimartingale; see, for example,
\cite[Chapters~4 and~6]{ApplebaumLevy}.  Moreover,
\eqref{Boltzmann-weak-normalization} gives the distributional identity
\[
 Q_{\rm B}(F_{t-}^{\delta,\varepsilon},F_{t-}^{\delta,\varepsilon})
 =\frac12\int_{\mathscr C_{\rm B}}B
 F_{t-}^{\delta,\varepsilon}F_{t-,*}^{\delta,\varepsilon}
 \mathcal J_c\,\d c.
\]
It follows that
\begin{align*}
 \frac1{\delta^2}D\Gamma(F_{t-}^{\delta,\varepsilon})
 \left[Q_{\rm B}
 (F_{t-}^{\delta,\varepsilon},F_{t-}^{\delta,\varepsilon})\right]=
 \frac1{2\varepsilon\delta^2}
 \int_{\mathscr C_{\rm B}}B
 F_{t-}^{\delta,\varepsilon}F_{t-,*}^{\delta,\varepsilon}
 \varepsilon D\Gamma(F_{t-}^{\delta,\varepsilon})[\mathcal J_c]\,\d c.
\end{align*}
This term cancels the linear compensation term in the previous display and
leaves the uncompensated jump term in
\eqref{rescaled-Boltzmann-generator}.

Since the formal update \(F+\varepsilon\mathcal J_c\) may fail to preserve
nonnegativity for a continuum density, we therefore use
\eqref{rescaled-Boltzmann-generator} as an algebraic jump operator.  Its
exponential symbol below is well defined under the integrability hypotheses of the theorem.

For \(q\in C_c^\infty(\mathsf Z_{\rm B})\), consider the cylindrical
observable \(F\mapsto e^{\langle F,q\rangle}\).  Its differential and its
increment under one collision are
\begin{align*}
 D\bigl(e^{\langle\,\cdot\,,q\rangle}\bigr)(F)[H]
 &=e^{\langle F,q\rangle}\langle H,q\rangle,\\
 e^{\langle F+\varepsilon\mathcal J_c,q\rangle}
 -e^{\langle F,q\rangle}
 &=e^{\langle F,q\rangle}
 \left(e^{\varepsilon\langle\mathcal J_c,q\rangle}-1\right)=e^{\langle F,q\rangle}
 \left(e^{\varepsilon\mathscr D_{\rm B}q(c)}-1\right).
\end{align*}
Substituting these identities into
\eqref{rescaled-Boltzmann-generator}, we obtain
\begin{align*}
 \left(\mathcal L_{\delta,\varepsilon}^{\rm B}
 e^{\langle\,\cdot\,,q\rangle}\right)(F)
 ={}&-\frac1\delta e^{\langle F,q\rangle}
 \langle v\cdot\nabla_xF,q\rangle\\
 &+\frac{e^{\langle F,q\rangle}}{2\varepsilon\delta^2}
 \int_{\mathscr C_{\rm B}}BFF_*
 \left(e^{\varepsilon\mathscr D_{\rm B}q(c)}-1\right)\,\d c.
\end{align*}
Multiplication by \(e^{-\langle F,q\rangle}\) then gives the
Boltzmann exponential symbol
\begin{align}\label{rescaled-Boltzmann-Hamiltonian}
 \mathcal H_{\delta,\varepsilon}^{\rm B}(F,q)
 :={}&e^{-\langle F,q\rangle}
 \left(\mathcal L_{\delta,\varepsilon}^{\rm B}
 e^{\langle\,\cdot\,,q\rangle}\right)(F)\notag\\
 ={}&-\frac1\delta\langle v\cdot\nabla_xF,q\rangle+\frac1{2\varepsilon\delta^2}
 \int_{\mathscr C_{\rm B}}BFF_*
 \bigl(e^{\varepsilon\mathscr D_{\rm B}q(c)}-1\bigr)\,\d c.
\end{align}
Using
\eqref{Boltzmann-weak-normalization}, we separate it into the deterministic
drift and the centered collision cumulant:
\begin{align}\label{Boltzmann-Hamiltonian-drift-cumulant-splitting}
 \mathcal H_{\delta,\varepsilon}^{\rm B}(F,q)
 ={}&-\frac1\delta\langle v\cdot\nabla_xF,q\rangle
 +\frac1{\delta^2}\langle Q_{\rm B}(F,F),q\rangle\notag\\
 &+\frac1{2\varepsilon\delta^2}
 \int_{\mathscr C_{\rm B}}BFF_*
 \left(e^{\varepsilon\mathscr D_{\rm B}q(c)}
 -1-\varepsilon\mathscr D_{\rm B}q(c)\right)\d c.
\end{align}
For density \(\varrho>0\), bulk velocity \(U\in\mathbb R^3\), and temperature \(T>0\),
write
\begin{align*}
 \mathcal M_{\varrho,U,T}(v)
 :=\varrho(2\pi T)^{-3/2}
 \exp\left(-\frac{|v-U|^2}{2T}\right).
\end{align*}
Define the normalized global Maxwellian by
\[
 M(v):=\mathcal M_{1,0,1}(v)
 =(2\pi)^{-3/2}e^{-|v|^2/2}.
\]
Given
\[
 u\in C^\infty(\T^3;\R^3),
 \qquad \nabla_x\cdot u=0, \qquad \theta\in C^\infty(\T^3;\R), 
\]
and \(\delta\in(0,1)\) satisfying
\(\delta\|\theta\|_{L^\infty(\T^3)}<1\), define the local Maxwellian
\begin{align}\label{local-Maxwellian-family}
 F_\delta[u,\theta](x,v)
 :=\mathcal M_{1-\delta\theta(x),\,\delta u(x),\,
                    1+\delta\theta(x)}(v).
\end{align}
Direct substitution gives
\begin{align*}
 \frac{F_\delta[u,\theta](x,v)}{M(v)}
 ={}&(1-\delta\theta(x))(1+\delta\theta(x))^{-3/2}\exp\left\{
 \frac{\delta\theta(x)|v|^2+2\delta u(x)\cdot v
       -\delta^2|u(x)|^2}
 {2(1+\delta\theta(x))}
 \right\}.
\end{align*}
As \(\delta\downarrow0\),
\begin{align*}
 &(1-\delta\theta)(1+\delta\theta)^{-3/2}
 =1-\frac52\delta\theta+O(\delta^2),\\
 &\exp\left\{
 \frac{\delta\theta|v|^2+2\delta u\cdot v-\delta^2|u|^2}
 {2(1+\delta\theta)}
 \right\}
 =1+\delta\left(u\cdot v+\frac{\theta|v|^2}{2}\right)
 +O(\delta^2),
\end{align*}
locally uniformly in \((x,v)\).  Multiplying these two expansions, we obtain
\begin{align}\label{hydrodynamic-profile}
 \frac{F_\delta[u,\theta]-M}{\delta M}
 ={}&u\cdot v+\theta\frac{|v|^2-5}{2}+O(\delta)
 \longrightarrow
 g_0[u,\theta]
 :=u\cdot v+\theta\frac{|v|^2-5}{2}.
\end{align}

For velocity tests \(h,k\in C_c^\infty(\R^3)\), introduce the nonnegative
symmetric Dirichlet form
\begin{align}\label{linearized-Boltzmann-form}
 \mathscr E_{\rm B}(h,k)
 :=\frac14\int_{\R^3\times\R^3\times\mathbb S^2}
 BMM_*\,\mathscr D_{\rm B}h\,\mathscr D_{\rm B}k
 \,\d v\,\d v_*\,\d\sigma.
\end{align}
\begin{assumption}[Bounded isotropic cutoff Maxwell kernel]
\label{ass:Boltzmann-kernel-conditions}
There is a measurable function \(b:[-1,1]\to(0,\infty)\) and constants
\(0<b_-\leq b_+<\infty\) such that, for \(v\neq v_*\),
\begin{align}\label{linearized-Boltzmann-kernel-conditions}
 B(v-v_*,\sigma)
 =b\left(\frac{v-v_*}{|v-v_*|}\cdot\sigma\right),
 \qquad
 b_-\leq b(s)\leq b_+
 \quad\text{for almost every }s\in[-1,1].
\end{align}
On the set \(v=v_*\), the kernel \(B\) may be assigned any value.   
\end{assumption}

From this assumption, we can see that \(B\) is a bounded,
strictly positive, rotationally invariant cutoff Maxwell kernel, and it has
the exchange and pre--postcollisional symmetries in
\eqref{collision-measure-microreversibility}. This is the bounded-kernel
hypothesis used in \cite[assumption (H1) and Proposition~1.4]
{GolseSaintRaymondNSLimit}, written in the postcollisional-direction
parametrization used here.  

We next introduce the linearized Boltzmann collision operator.  For a 
smooth velocity function \(h\), set
\begin{align}\label{linearized-Boltzmann-strong-operator}
 Lh:=-M^{-1}\bigl[Q_{\rm B}(Mh,M)+Q_{\rm B}(M,Mh)\bigr].
\end{align}
Also define the stress and heat-flux modes by
\begin{align}\label{Boltzmann-stress-mode}
 \mathsf A_{ij}(v):=v_iv_j-\frac{|v|^2}{3}\delta_{ij},
 \qquad 1\leq i,j\leq3,
 \qquad
 \mathsf K(v):=v\frac{|v|^2-5}{2}.
\end{align}

\begin{lemma}
\label{lem:linearized-Boltzmann-kernel}
Suppose that \(B\) satisfies
Assumption~\ref{ass:Boltzmann-kernel-conditions}.  Then the following
properties hold.
\begin{enumerate}
\item[(i)] The operator \(L\) extends to a bounded nonnegative self-adjoint
Fredholm operator on \(L^2(M\,\d v)\).  For every
\(h,k\in L^2(M\,\d v)\),
\begin{align}\label{linearized-Boltzmann-operator}
 \langle h,Lk\rangle_M
 =\mathscr E_{\rm B}(h,k)
 =\frac14\int_{\R^3\times\R^3\times\mathbb S^2}
 BMM_*\,\mathscr D_{\rm B}h\,\mathscr D_{\rm B}k
 \,\d v\,\d v_*\,\d\sigma,
\end{align}
where the last integral is understood by density.

\item[(ii)] The nullspace of \(L\) is
\begin{align}\label{linearized-kernel-assumption}
 \ker L=\operatorname{span}\{1,v_1,v_2,v_3,|v|^2\}.
\end{align}

\item[(iii)]
There is a number \(\lambda_{\mathsf A}>0\) such that
\begin{align}\label{stress-mode-eigenvalue}
 L\mathsf A_{ij}=\lambda_{\mathsf A}\mathsf A_{ij},
 \qquad 1\leq i,j\leq3.
\end{align}
Consequently, the stress cell problems
\begin{align}\label{Boltzmann-stress-cell-problem}
 L\widehat{\mathsf A}_{ij}=\mathsf A_{ij},
 \qquad
 \widehat{\mathsf A}_{ij}\perp\ker L,
 \qquad 1\leq i,j\leq3,
\end{align}
have the unique solutions
\begin{align}\label{explicit-stress-corrector}
 \widehat{\mathsf A}_{ij}
 =\lambda_{\mathsf A}^{-1}\mathsf A_{ij}, 
\end{align}
and they are even in \(v\).

\item[(iv)]
There is a number \(\mu>0\) such that
\begin{align}\label{stress-isotropy-identity}
 \langle\widehat{\mathsf A}_{ij},\mathsf A_{kl}\rangle_M
 =\mu\mathsf C_{ij,kl},
 \qquad 1\leq i,j,k,l\leq3,
\end{align}
where
\begin{align}\label{stress-isotropy-tensor}
 \mathsf C_{ij,kl}
 :=\delta_{ik}\delta_{jl}+\delta_{il}\delta_{jk}
 -\frac23\delta_{ij}\delta_{kl}.
\end{align}
Equivalently,
\begin{align}\label{stress-viscosity-explicit-sum}
 \mu=\frac1{10}\sum_{i,j=1}^3
 \langle\widehat{\mathsf A}_{ij},\mathsf A_{ij}\rangle_M
 =\lambda_{\mathsf A}^{-1}.
\end{align}

\item[(v)]
The correctors satisfy all integrability conditions used below.  For every
\(\beta\in(0,1)\), with
\(M_\beta(v):=(\beta/2\pi)^{3/2}e^{-\beta|v|^2/2}\),
\begin{align}\label{stress-corrector-transport-moment}
 \int_{\R^3}M_\beta(v)(1+|v|^3)
 \left(\sum_{i,j=1}^3
 |\widehat{\mathsf A}_{ij}(v)|^2\right)^{1/2}\d v<\infty.
\end{align}
Moreover,
\begin{align}\label{finite-collision-activity}
 \int_{\mathscr C_{\rm B}}B(c)M_\beta(v)M_\beta(v_*)\,\d c<\infty.
\end{align}
For every \(\varphi\in C^\infty(\T^3;\R^3)\), there is an \(\eta>0\)
such that, with
\(\nabla_{\rm sym}\varphi
:=(\nabla\varphi+(\nabla\varphi)^{\mathsf T})/2\),
\begin{align}\label{stress-corrector-exponential-moment}
 &\int_{\mathscr C_{\rm B}}BM_\beta M_{\beta,*}
 \left|\mathscr D_{\rm B}\left(
 \sum_{i,j=1}^3(\nabla_{\rm sym}\varphi)_{ij}
 \widehat{\mathsf A}_{ij}\right)\right|^2
 \exp\left\{\eta\left|\mathscr D_{\rm B}\left(
 \sum_{i,j=1}^3(\nabla_{\rm sym}\varphi)_{ij}
 \widehat{\mathsf A}_{ij}\right)\right|\right\}\,\d c<\infty.
\end{align}
\end{enumerate}
\end{lemma}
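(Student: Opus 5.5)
The plan is to treat the five items in order, relying on the symmetries \eqref{collision-measure-microreversibility} and the Maxwell-kernel structure of Assumption~\ref{ass:Boltzmann-kernel-conditions}.

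\textbf{Item (i).} Insert $f=M(1+sh)$ into \eqref{Boltzmann-weak-normalization} and differentiate at $s=0$. Then use the exchange and pre--post symmetries together with $M'M'_*=MM_*$ (energy conservation) to symmetrize. This gives
\[
\langle k,Lh\rangle_M=\tfrac14\int BMM_*\,\mathscr D_{\rm B}h\,\mathscr D_{\rm B}k
\]
for smooth $h,k$. This form is symmetric and nonnegative. For boundedness, use Grad's splitting $L=\nu-K$. Because $B=b(\cos\theta)$ has no $|v-v_*|$ dependence, the collision frequency $\nu=\int BM_*\,\d v_*\,\d\sigma$ is the constant $|\mathbb S^2|\int b$. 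The gain and loss parts of $K$ are Hilbert--Schmidt, or at least compact, on $L^2(M\,\d v)$ by the classical Grad/Hilbert estimates for cutoff kernels, which are cited as in \cite{GolseSaintRaymondNSLimit}. Hence $L$ is bounded, self-adjoint, and Fredholm, and the weak identity extends by density.

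\textbf{Item (ii).} If $\mathscr E_{\rm B}(h,h)=0$, then $\mathscr D_{\rm B}h=0$ almost everywhere, because $BMM_*>0$ almost everywhere since $b\ge b_->0$. So $h$ is a collision invariant. The classical characterization of measurable collision invariants (Boltzmann, Arkeryd, Cercignani) then gives \eqref{linearized-kernel-assumption}. The reverse inclusion follows from momentum and energy conservation.

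\textbf{Item (iii).} This is the step I expect to be the main obstacle. For Maxwell kernels I would show two facts.
\begin{enumerate}
\item $L$ commutes with rotations $h\mapsto h\circ R$, which follows from the rotational invariance of $B$ and $M$.
\item $L$ maps polynomials of degree $\le2$ into themselves.
\end{enumerate}
For the second fact, compute $Lh$ for quadratic $h$ directly. Write $v',v'_*$ in center-of-mass form. Then $h(v')+h(v'_*)$ is a polynomial in $V$, $|g|$, and $|g|\sigma$, and integrating against $b(\hat g\cdot\sigma)\,\d\sigma$ produces only polynomial terms in $v,v_*$, because $|g|^2\hat g\otimes\hat g=g\otimes g$. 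Integrating in $v_*$ against $M_*$ then returns a polynomial of degree $\le2$ in $v$.

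Now decompose the space of quadratic polynomials under $SO(3)$. The symmetric trace-free tensors $\{\mathsf A_{ij}\}$ span an irreducible five-dimensional representation that occurs exactly once. By Schur's lemma, $L\mathsf A_{ij}=\lambda_{\mathsf A}\mathsf A_{ij}$. Alternatively, one can write out the integral with the Bobylev identity $\int b\,(\sigma\otimes\sigma-\tfrac13 I)\,\d\sigma=c_b(\hat g\otimes\hat g-\tfrac13I)$ and read off $\lambda_{\mathsf A}$ explicitly. Positivity follows from
\[
\lambda_{\mathsf A}\|\mathsf A_{12}\|_M^2=\mathscr E_{\rm B}(\mathsf A_{12},\mathsf A_{12})>0,
\]
since $\mathsf A_{12}\notin\ker L$. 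The corrector $\lambda_{\mathsf A}^{-1}\mathsf A_{ij}$ is orthogonal to $\ker L$ because $L$ is self-adjoint and $\lambda_{\mathsf A}\ne0$. Uniqueness holds on $(\ker L)^\perp$, and evenness is evident.

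\textbf{Item (iv).} Gaussian moments $\int v_iv_jv_kv_lM=\delta_{ij}\delta_{kl}+\delta_{ik}\delta_{jl}+\delta_{il}\delta_{jk}$ give $\langle\mathsf A_{ij},\mathsf A_{kl}\rangle_M=\mathsf C_{ij,kl}$. Hence $\mu=\lambda_{\mathsf A}^{-1}$, and contracting indices gives the factor $10$, consistent with the introduction.

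\textbf{Item (v).} Since $\widehat{\mathsf A}_{ij}$ is quadratic, \eqref{stress-corrector-transport-moment} is a polynomial moment of a Gaussian. For \eqref{finite-collision-activity}, use $B\le b_+$ and $|\T^3\times\mathbb S^2|<\infty$. For \eqref{stress-corrector-exponential-moment}, use the bound
\[
|\mathscr D_{\rm B}(\sum_{ij}(\nabla_{\rm sym}\varphi)_{ij}\widehat{\mathsf A}_{ij})|\le C_\varphi(|v|^2+|v_*|^2),
\]
with $C_\varphi$ depending on $\|\nabla\varphi\|_\infty$. Choose $\eta<\beta/(4C_\varphi)$, so that $M_\beta M_{\beta,*}\,e^{\eta C_\varphi(|v|^2+|v_*|^2)}$ times a polynomial is integrable.
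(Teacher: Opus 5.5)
Your proposal is correct. Items (i), (ii), (iv) and (v) match the paper in substance. One small difference in (i): the paper obtains the Dirichlet identity from the strong form of $Q_{\rm B}$ by the pre--post and exchange changes of variables, whereas you differentiate the weak normalization along $M(1+sh)$. This is the same symmetrization in a different order. The paper also cites \cite{GolseSaintRaymondNSLimit} for the Fredholm extension and the classical characterization of collision invariants for (ii), as you do. One minor normalization point: the collision frequency is $2\pi\int_{-1}^1 b(s)\,\d s$.

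The genuine difference is in (iii). The paper computes $L\mathsf A_{ij}$ explicitly in three steps:
\begin{enumerate}
\item It reduces $\mathsf A_{ij}+\mathsf A_{ij,*}-\mathsf A_{ij}'-\mathsf A_{ij,*}'$ to $\tfrac12\bigl((v_i-v_{*,i})(v_j-v_{*,j})-|v-v_*|^2\sigma_i\sigma_j\bigr)$.
\item It evaluates $\int_{\mathbb S^2} b\,\sigma\otimes\sigma\,\d\sigma$ by rotating the relative direction to $(1,0,0)$.
\item It integrates against $M_*$.
\end{enumerate}
This produces the closed formula $\lambda_{\mathsf A}=\frac34\int_{\mathbb S^2}b(\bar\sigma_1)(1-\bar\sigma_1^2)\,\d\bar\sigma$. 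Positivity is then read off directly from $b\geq b_->0$, and the viscosity $\mu=\lambda_{\mathsf A}^{-1}$ is explicit. Your alternative identity $\int b\,(\sigma\otimes\sigma-\tfrac13 I)\,\d\sigma=c_b(\hat g\otimes\hat g-\tfrac13 I)$ is exactly this calculation.

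Your primary route instead combines three facts with Schur's lemma:
\begin{itemize}
\item $L(h\circ R)=(Lh)\circ R$ for rotations $R$.
\item Invariance of the polynomials of degree $\leq 2$ under $L$. This is precisely where the Maxwell form $B=b(\hat g\cdot\sigma)$ enters.
\item Multiplicity one of the $\ell=2$ component $\operatorname{span}\{\mathsf A_{ij}\}$.
\end{itemize}
Positivity then follows from $\lambda_{\mathsf A}\|\mathsf A_{12}\|_M^2=\mathscr E_{\rm B}(\mathsf A_{12},\mathsf A_{12})$ together with (ii).

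Your route avoids the angular integral and explains structurally why the trace-free stress modes are eigenfunctions. However, it gives $\lambda_{\mathsf A}$, and hence $\mu$, only abstractly, and it makes positivity depend on the kernel characterization in (ii). Strictly, Schur over $\R$ also needs the $5$-dimensional representation to be absolutely irreducible; this holds for $SO(3)$. Your argument also makes explicit that $\lambda_{\mathsf A}^{-1}\mathsf A_{ij}\perp\ker L$, via self-adjointness, which the paper leaves implicit.
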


\begin{proof}
By the definition of \(Q_{\rm B}\), 
\begin{align*}
 \bigl[Q_{\rm B}(Mk,M)+Q_{\rm B}(M,Mk)\bigr](v)=
 \int_{\R^3\times\mathbb S^2}B
 \left[M'M_*'(k'+k_*')-MM_*(k+k_*)\right]
 \,\d v_*\,\d\sigma.
\end{align*}
Multiplying by \(h(v)\) and integrating in \(v\), we obtain
\begin{align*}
 &\int_{\R^3}h(v)
 \bigl[Q_{\rm B}(Mk,M)+Q_{\rm B}(M,Mk)\bigr](v)\,\d v\\
 &\quad={}
 \int_{\R^3\times\R^3\times\mathbb S^2}
 B\left[M'M_*'(k'+k_*')-MM_*(k+k_*)\right]h
 \,\d v\,\d v_*\,\d\sigma.
\end{align*}
For the gain term, we use the pre--postcollisional change of variables and
the microreversibility of the collision measure to obtain
\begin{align*}
 \int_{\R^3\times\R^3\times\mathbb S^2}
 BM'M_*'(k'+k_*')h\,\d v\,\d v_*\,\d\sigma=
 \int_{\R^3\times\R^3\times\mathbb S^2}
 BMM_*(k+k_*)h'\,\d v\,\d v_*\,\d\sigma.
\end{align*}
We therefore have
\begin{align*}
 \int_{\R^3}h(v)
 \bigl[Q_{\rm B}(Mk,M)+Q_{\rm B}(M,Mk)\bigr](v)\,\d v=\int_{\R^3\times\R^3\times\mathbb S^2}
 BMM_*\,(k+k_*)\,(h'-h)
 \,\d v\,\d v_*\,\d\sigma=:I.
\end{align*}
Exchanging \(v\) and \(v_*\), together with the corresponding change
\(\sigma\mapsto-\sigma\), replaces \(h'-h\) by \(h_*'-h_*\).
Averaging the two expressions for \(I\) therefore yields
\begin{align*}
 I=\frac12\int_{\R^3\times\R^3\times\mathbb S^2}
 BMM_*\,(k+k_*)\,
 \bigl(h'+h_*'-h-h_*\bigr)
 \,\d v\,\d v_*\,\d\sigma.
\end{align*}
Under the pre--postcollisional change of variables,
\(MM_*=M'M_*'\), the collision difference of \(h\) changes sign, and
\(k+k_*\) becomes \(k'+k_*'\).  Averaging once more gives
\begin{align*}
 -I
 =\frac14\int_{\R^3\times\R^3\times\mathbb S^2}
 BMM_*\,
 \bigl(h'+h_*'-h-h_*\bigr)
 \bigl(k'+k_*'-k-k_*\bigr)
 \,\d v\,\d v_*\,\d\sigma.
\end{align*}
Combining this identity with
\eqref{linearized-Boltzmann-strong-operator}, we obtain
\eqref{linearized-Boltzmann-operator} first for smooth functions and then
by density.  The bounded self-adjoint Fredholm extension and its nullspace
are the classical linearized Boltzmann result recalled in
\cite[Proposition~1.4]{GolseSaintRaymondNSLimit}.

The nullspace \eqref{linearized-kernel-assumption} also follows directly
from the Dirichlet form, as the functions
\(1,v_1,v_2,v_3,|v|^2\) are collision invariants.  Conversely, if
\(h\in\ker L\), then
\begin{align*}
 0=\langle h,Lh\rangle_M
 =\frac14\int_{\R^3\times\R^3\times\mathbb S^2}
 BMM_*\,|\mathscr D_{\rm B}h|^2
 \,\d v\,\d v_*\,\d\sigma, 
\end{align*}
and strict positivity of \(B\) gives \(\mathscr D_{\rm B}h=0\) almost
everywhere.  By the classical characterization of measurable collision
invariants \cite{CercignaniBoltzmann,CercignaniIllnerPulvirenti}, we have 
\(h(v)=a+b\cdot v+c|v|^2\).

We next compute the stress eigenvalue.  For each
\(1\leq i,j\leq3\), the detailed-balance identity \(M'M_*'=MM_*\) and
\eqref{linearized-Boltzmann-strong-operator} give
\begin{align*}
 L\mathsf A_{ij}(v)
 =\int_{\R^3\times\mathbb S^2}BM_*
 \bigl(\mathsf A_{ij}+\mathsf A_{ij,*}
 -\mathsf A_{ij}'-\mathsf A_{ij,*}'\bigr)
 \,\d v_*\,\d\sigma.
\end{align*}
By the collision rule,
\begin{align*}
 v_iv_j+v_{*,i}v_{*,j}
 &={}
 \frac12(v_i+v_{*,i})(v_j+v_{*,j})
 +\frac12(v_i-v_{*,i})(v_j-v_{*,j}),\\
 v_i'v_j'+v_{*,i}'v_{*,j}'
 &={}
 \frac12(v_i+v_{*,i})(v_j+v_{*,j})
 +\frac12|v-v_*|^2\sigma_i\sigma_j.
\end{align*}
Conservation of kinetic energy cancels the trace terms in
\(\mathsf A_{ij}\), and we have
\begin{align*}
 &\mathsf A_{ij}(v)+\mathsf A_{ij}(v_*)
 -\mathsf A_{ij}(v')-\mathsf A_{ij}(v_*')\\
 &\qquad={}
 v_iv_j+v_{*,i}v_{*,j}-v_i'v_j'-v_{*,i}'v_{*,j}'=
 \frac12\left((v_i-v_{*,i})(v_j-v_{*,j})
 -|v-v_*|^2\sigma_i\sigma_j\right).
\end{align*}
It follows that
\begin{align*}
 L\mathsf A_{ij}(v)
 =\frac12\int_{\R^3}M_*
 \int_{\mathbb S^2}b\left(
 \frac{v-v_*}{|v-v_*|}\cdot\sigma\right)
 \left((v_i-v_{*,i})(v_j-v_{*,j})
 -|v-v_*|^2\sigma_i\sigma_j\right)
 \,\d\sigma\,\d v_*.
\end{align*}
To compute the angular integral, we choose an orthogonal matrix \(R\) such
that \(R(1,0,0)=\frac{v-v_*}{|v-v_*|}\), and set
\(\sigma=R\bar\sigma\).  Rotation invariance of the surface measure
gives
\begin{align*}
 \int_{\mathbb S^2}b\left(
 \frac{v-v_*}{|v-v_*|}\cdot\sigma\right)
 \sigma\otimes\sigma\,\d\sigma=
 R\left(\int_{\mathbb S^2}b(\bar\sigma_1)
 \bar\sigma\otimes\bar\sigma\,\d\bar\sigma\right)R^{\mathsf T},
\end{align*}
where \(\bar\sigma_1\) is the first coordinate of
\(\bar\sigma=(\bar\sigma_1,\bar\sigma_2,\bar\sigma_3)\in\mathbb S^2\).  Reflection
symmetry makes all off-diagonal entries of the matrix integral vanish.
Moreover, rotational symmetry in the
\((\bar\sigma_2,\bar\sigma_3)\)-plane and
\(\bar\sigma_2^2+\bar\sigma_3^2=1-\bar\sigma_1^2\) give
\begin{align*}
 \int_{\mathbb S^2}b(\bar\sigma_1)\bar\sigma_2^2\,\d\bar\sigma
 =\int_{\mathbb S^2}b(\bar\sigma_1)\bar\sigma_3^2\,\d\bar\sigma
 =\frac12\int_{\mathbb S^2}b(\bar\sigma_1)
 (1-\bar\sigma_1^2)\,\d\bar\sigma.
\end{align*}
We thus obtain
\begin{align*}
 \int_{\mathbb S^2}b(\bar\sigma_1)
 \bar\sigma\otimes\bar\sigma\,\d\bar\sigma&=
 \frac12\int_{\mathbb S^2}b(\bar\sigma_1)
 (1-\bar\sigma_1^2)\,\d\bar\sigma\,I
 +\frac12\int_{\mathbb S^2}b(\bar\sigma_1)
 (3\bar\sigma_1^2-1)\,\d\bar\sigma\,
 (1,0,0)\otimes(1,0,0).
\end{align*}
After conjugation by \(R\), this becomes, componentwise,
\begin{align*}
 \int_{\mathbb S^2}b\left(
 \frac{v-v_*}{|v-v_*|}\cdot\sigma\right)
 \sigma_i\sigma_j\,\d\sigma=
 \frac12\int_{\mathbb S^2}b(\bar\sigma_1)
 (1-\bar\sigma_1^2)\,\d\bar\sigma\,
 \delta_{ij}
 +\frac12\int_{\mathbb S^2}b(\bar\sigma_1)
 (3\bar\sigma_1^2-1)\,\d\bar\sigma\,
 \frac{(v_i-v_{*,i})(v_j-v_{*,j})}{|v-v_*|^2}.
\end{align*}
We substitute this identity into the preceding angular integral to obtain
\begin{align*}
 \frac12\int_{\mathbb S^2}b\left(
 \frac{v-v_*}{|v-v_*|}\cdot\sigma\right)
 \left((v_i-v_{*,i})(v_j-v_{*,j})
 -|v-v_*|^2\sigma_i\sigma_j\right)\,\d\sigma={}
 \lambda_{\mathsf A}\left(
 (v_i-v_{*,i})(v_j-v_{*,j})
 -\frac{|v-v_*|^2}{3}\delta_{ij}\right),
\end{align*}
where the constant is explicitly
\begin{align*}
 \lambda_{\mathsf A}
 :=\frac34\int_{\mathbb S^2}b(\bar\sigma_1)
 \bigl(1-\bar\sigma_1^2\bigr)\,\d\bar\sigma>0.
\end{align*}
Since \(M\) has zero mean and covariance matrix \(I\),
\begin{align*}
 \int_{\R^3}M(v_*)
 \left((v_i-v_{*,i})(v_j-v_{*,j})
 -\frac{|v-v_*|^2}{3}\delta_{ij}\right)\,\d v_*=
 v_iv_j+\delta_{ij}
 -\frac{|v|^2+3}{3}\delta_{ij}
 =\mathsf A_{ij}(v).
\end{align*}
We conclude that \(L\mathsf A_{ij}=\lambda_{\mathsf A}\mathsf A_{ij}\),
which is \eqref{stress-mode-eigenvalue}.  The same calculation gives
existence and uniqueness in
\eqref{Boltzmann-stress-cell-problem}, in agreement with
\cite[Proposition~1.4 and (1.42)]{GolseSaintRaymondNSLimit}, and gives
\eqref{explicit-stress-corrector}.

By the definition of \(L\), we can see that the map \(h(v)\mapsto h(-v)\) commutes with \(L\).  Since
\(\mathsf A_{ij}\) is even, uniqueness makes
\(\widehat{\mathsf A}_{ij}\) even as well.  For the standard Maxwellian \(M\),
the Gaussian moment identities give
\begin{align*}
 \int_{\R^3}v_iv_jv_kv_lM(v)\,\d v
 &=\delta_{ij}\delta_{kl}+\delta_{ik}\delta_{jl}
 +\delta_{il}\delta_{jk},\\
 \int_{\R^3}v_iv_j|v|^2M(v)\,\d v&=5\delta_{ij},\qquad \int_{\R^3}|v|^4M(v)\,\d v=15.
\end{align*}
Using these identities, we have
\begin{align*}
 \langle\mathsf A_{ij},\mathsf A_{kl}\rangle_M
 &={}\int_{\R^3}
 \left(v_iv_j-\frac{|v|^2}{3}\delta_{ij}\right)
 \left(v_kv_l-\frac{|v|^2}{3}\delta_{kl}\right)M(v)\,\d v\\
 &={}\delta_{ij}\delta_{kl}+\delta_{ik}\delta_{jl}
 +\delta_{il}\delta_{jk}
 -\frac53\delta_{ij}\delta_{kl}
 -\frac53\delta_{ij}\delta_{kl}
 +\frac{15}{9}\delta_{ij}\delta_{kl}\\
 &={}\delta_{ik}\delta_{jl}+\delta_{il}\delta_{jk}
 -\frac23\delta_{ij}\delta_{kl}.
\end{align*}
Together with \eqref{explicit-stress-corrector}, we obtain
\eqref{stress-isotropy-identity}.  Since
\(\sum_{i,j=1}^3\mathsf C_{ij,ij}=10\), summing over \(i,j\) gives
\eqref{stress-viscosity-explicit-sum}.

To prove the remaining integrability statements, we first note from
\eqref{explicit-stress-corrector} that every component of
\(\widehat{\mathsf A}\) is a quadratic polynomial.  Gaussian decay proves
\eqref{stress-corrector-transport-moment}, while boundedness of \(B\)
proves \eqref{finite-collision-activity}.  For fixed smooth \(\varphi\),
conservation of kinetic energy gives
\begin{align*}
 \left|\mathscr D_{\rm B}\left(
 \sum_{i,j=1}^3(\nabla_{\rm sym}\varphi)_{ij}
 \widehat{\mathsf A}_{ij}\right)\right|
 \leq C_\varphi(1+|v|^2+|v_*|^2).
\end{align*}
Choosing \(\eta>0\) so that \(\eta C_\varphi<\beta/2\) proves
\eqref{stress-corrector-exponential-moment} from the Gaussian decay of
\(M_\beta(v)M_\beta(v_*)\).
\end{proof}

Fix a smooth divergence-free test field
\(\varphi:\T^3\to\R^3\) and put
\begin{align}\label{perturbed-momentum-test}
 \nabla_{\rm sym}w&:=\frac12\bigl(\nabla w+(\nabla w)^{\mathsf T}\bigr),
 \notag\\
 q_{\delta,\varphi}(x,v)
 &:=\frac1\delta v\cdot\varphi(x)
 +\sum_{i,j=1}^3
 (\nabla_{\rm sym}\varphi)_{ij}(x)\widehat{\mathsf A}_{ij}(v).
\end{align}
Because \(\nabla\cdot\varphi=0\), the matrix
\(\nabla_{\rm sym}\varphi\) is trace-free.  The
first term of \(q_{\delta,\varphi}\) tests the rescaled momentum, while the
second is the kinetic stress corrector.  We define the corrected kinetic
Hamiltonian by
\begin{align}\label{corrected-Boltzmann-Hamiltonian}
\mathbb H_{\delta,\varepsilon}^{\rm B}
 (u,\theta;\varphi)
 :=\mathcal H_{\delta,\varepsilon}^{\rm B}
 \bigl(F_\delta[u,\theta],q_{\delta,\varphi}\bigr).
\end{align}
By conservation of momentum in each collision,
\begin{align*}
 \mathscr D_{\rm B}\bigl(v\cdot\varphi(x)\bigr)(c)
 =\bigl(v'+v_*'-v-v_*\bigr)\cdot\varphi(x)=0.
\end{align*}
Substituting \eqref{local-Maxwellian-family} and
\eqref{perturbed-momentum-test} into
\eqref{rescaled-Boltzmann-Hamiltonian} therefore gives the explicit formula
\begin{align}\label{corrected-Boltzmann-Hamiltonian-explicit}
 \mathbb H_{\delta,\varepsilon}^{\rm B}
 (u,\theta;\varphi)&={}-\frac1{\delta^2}
 \int_{\T^3\times\R^3}
 \bigl(v\cdot\nabla_xF_\delta[u,\theta]\bigr)
 \bigl(v\cdot\varphi(x)\bigr)\,\d v\,\d x\notag\\
 &\quad-\frac1\delta
 \int_{\T^3\times\R^3}
 \bigl(v\cdot\nabla_xF_\delta[u,\theta]\bigr)
 \sum_{i,j=1}^3(\nabla_{\rm sym}\varphi)_{ij}(x)
 \widehat{\mathsf A}_{ij}(v)\,\d v\,\d x\notag\\
 &\quad+\frac1{2\varepsilon\delta^2}
 \int_{\mathscr C_{\rm B}}
 B(c)F_\delta[u,\theta](x,v)F_\delta[u,\theta](x,v_*)\notag\\
 &\qquad\qquad\times\left[\exp\left\{
 \varepsilon\sum_{i,j=1}^3
 (\nabla_{\rm sym}\varphi)_{ij}(x)
 \bigl(
 \widehat{\mathsf A}_{ij}(v')
 +\widehat{\mathsf A}_{ij}(v_*')
 -\widehat{\mathsf A}_{ij}(v)
 -\widehat{\mathsf A}_{ij}(v_*)
 \bigr)\right\}-1\right]\,\d c.
\end{align}

In the following, we derive the Hamiltonian of LLNS.  Testing
LLNS equations against the smooth divergence-free field \(\varphi\), using
\(\Pi\varphi=\varphi\), and integrating by parts gives the formal scalar
semimartingale
\begin{align*}
 \d\langle u_t,\varphi\rangle
 ={}&\int_{\T^3}\left[
 \sum_{i,j=1}^3(u_t)_i(u_t)_j\partial_j\varphi_i
 -2\mu\sum_{i,j=1}^3
 (\nabla_{\rm sym}u_t)_{ij}
 (\nabla_{\rm sym}\varphi)_{ij}\right]\d x\,\d t\\
 &+\mathfrak a\sqrt{2\mu}
 \sum_{i,j=1}^3\int_{\T^3}
 (\nabla_{\rm sym}\varphi)_{ij}\,\d W^{\rm F}_{ij}(t,x).
\end{align*}
The covariance \eqref{LLNS-stress-covariance} shows that the
quadratic-variation density of the martingale part is
\begin{align*}
 2\mathfrak a^2\mu
 \int_{\T^3}\sum_{i,j,k,l=1}^3
 (\nabla_{\rm sym}\varphi)_{ij}\mathsf C_{ij,kl}
 (\nabla_{\rm sym}\varphi)_{kl}\,\d x.
\end{align*}
For the exponential test observable
\(u\mapsto\exp\{\langle u,\varphi\rangle\}\), its first two derivatives are
\begin{align*}
 D\bigl(e^{\langle\,\cdot\,,\varphi\rangle}\bigr)(u)[h]
 =e^{\langle u,\varphi\rangle}\langle h,\varphi\rangle,\qquad D^2\bigl(e^{\langle\,\cdot\,,\varphi\rangle}\bigr)(u)[h,k]
 =e^{\langle u,\varphi\rangle}
 \langle h,\varphi\rangle\langle k,\varphi\rangle.
\end{align*}
The scalar It\^o formula therefore gives
\begin{align*}
 \d e^{\langle u_t,\varphi\rangle}
 ={}&e^{\langle u_t,\varphi\rangle}
 \d\langle u_t,\varphi\rangle
 +\frac12e^{\langle u_t,\varphi\rangle}
 \d[\langle u,\varphi\rangle]_t\\
 ={}&e^{\langle u_t,\varphi\rangle}
 \Bigg\{\int_{\T^3}\left[
 \sum_{i,j=1}^3(u_t)_i(u_t)_j\partial_j\varphi_i
 -2\mu\sum_{i,j=1}^3
 (\nabla_{\rm sym}u_t)_{ij}
 (\nabla_{\rm sym}\varphi)_{ij}\right]\d x\\
 &\hspace{25mm}
 +\mathfrak a^2\mu\int_{\T^3}
 \sum_{i,j,k,l=1}^3
 (\nabla_{\rm sym}\varphi)_{ij}\mathsf C_{ij,kl}
 (\nabla_{\rm sym}\varphi)_{kl}\,\d x\Bigg\}\d t\\
 &+e^{\langle u_t,\varphi\rangle}
 \mathfrak a\sqrt{2\mu}\sum_{i,j=1}^3\int_{\T^3}
 (\nabla_{\rm sym}\varphi)_{ij}\,\d W^{\rm F}_{ij}(t,x).
\end{align*}
It follows that the formal generator
\(\mathcal L_{\mathfrak a}^{\rm LLNS}\) of the LLNS dynamics acts on this
exponential observable as
\begin{align}\label{LLNS-generator-on-exponential}
 \left(\mathcal L_{\mathfrak a}^{\rm LLNS}
 e^{\langle\,\cdot\,,\varphi\rangle}\right)(u)&={}
 e^{\langle u,\varphi\rangle}\Bigg\{
 \int_{\T^3}\left[
 \sum_{i,j=1}^3u_i u_j\partial_j\varphi_i
 -2\mu\sum_{i,j=1}^3(\nabla_{\rm sym}u)_{ij}
 (\nabla_{\rm sym}\varphi)_{ij}\right]\d x\notag\\
 &\hspace{39mm}
 +\mathfrak a^2\mu\int_{\T^3}
 \sum_{i,j,k,l=1}^3
 (\nabla_{\rm sym}\varphi)_{ij}\mathsf C_{ij,kl}
 (\nabla_{\rm sym}\varphi)_{kl}\,\d x
 \Bigg\}.
\end{align}
The last integral in \eqref{LLNS-generator-on-exponential} is the It\^o
correction produced by the second derivative of the exponential
observable.  Using the exponential-generator definition
\eqref{exponential-generator-general-definition}, we obtain
\begin{align}\label{LLNS-Hamiltonian}
 \mathbb H_{\mathfrak a}^{\rm LLNS}(u;\varphi)
 :={}&e^{-\langle u,\varphi\rangle}
 \left(\mathcal L_{\mathfrak a}^{\rm LLNS}
 e^{\langle\,\cdot\,,\varphi\rangle}\right)(u)\notag\\
 ={}&\int_{\T^3}
 \left[\sum_{i,j=1}^3u_i u_j\partial_j\varphi_i
 -2\mu\sum_{i,j=1}^3(\nabla_{\rm sym}u)_{ij}
 (\nabla_{\rm sym}\varphi)_{ij}\right]\d x\notag\\
 &+\mathfrak a^2\mu\int_{\T^3}
 \sum_{i,j,k,l=1}^3
 (\nabla_{\rm sym}\varphi)_{ij}\mathsf C_{ij,kl}
 (\nabla_{\rm sym}\varphi)_{kl}\,\d x.
\end{align}

\subsection{The convergence of Hamiltonians}
We now prove the convergence of the corrected Boltzmann Hamiltonian to the
LLNS Hamiltonian.
\begin{theorem}
\label{thm:Boltzmann-LLNS-Hamiltonian}
Let \(B\) be a measurable collision kernel satisfying
Assumption~\ref{ass:Boltzmann-kernel-conditions}, and let
\(\widehat{\mathsf A}\) and \(\mu\) be given by
Lemma~\ref{lem:linearized-Boltzmann-kernel}.
Let \(u,\varphi\in C^\infty(\T^3;\R^3)\) and
\(\theta\in C^\infty(\T^3;\R)\), with
\(\nabla\cdot u=\nabla\cdot\varphi=0\).
If, for some \(\mathfrak a\geq0\),
\begin{align}\label{Hamiltonian-distinguished-scaling}
 \delta\downarrow0,\qquad \varepsilon\downarrow0,\qquad
 \frac{\varepsilon}{\delta^2}\longrightarrow\mathfrak a^2,
\end{align}
then
\begin{align}\label{Boltzmann-to-LLNS-Hamiltonian-limit}
 \mathbb H_{\delta,\varepsilon}^{\rm B}(u,\theta;\varphi)
 \longrightarrow
 \mathbb H_{\mathfrak a}^{\rm LLNS}(u;\varphi).
\end{align}
The two parts of the kinetic Hamiltonian satisfy
\begin{align}\label{Hamiltonian-transport-limit}
 -\frac1\delta
 \langle v\cdot\nabla_xF_\delta,q_{\delta,\varphi}\rangle
 \longrightarrow
 \int_{\T^3}\left[
 \sum_{i,j=1}^3u_i u_j\partial_j\varphi_i
 -2\mu\sum_{i,j=1}^3
 (\nabla_{\rm sym}u)_{ij}
 (\nabla_{\rm sym}\varphi)_{ij}\right]\d x
\end{align}
and
\begin{align}\label{Hamiltonian-collision-limit}
 \frac1{2\varepsilon\delta^2}
 \int_{\mathscr C_{\rm B}}BF_\delta F_{\delta,*}
 \bigl(e^{\varepsilon\mathscr D_{\rm B}
 q_{\delta,\varphi}}-1\bigr)\,\d c\longrightarrow
 \mathfrak a^2\mu\int_{\T^3}
 \sum_{i,j,k,l=1}^3
 (\nabla_{\rm sym}\varphi)_{ij}\mathsf C_{ij,kl}
 (\nabla_{\rm sym}\varphi)_{kl}\,\d x.
\end{align}
\end{theorem}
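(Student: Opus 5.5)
The plan is to treat the transport term \eqref{Hamiltonian-transport-limit} and the collision term \eqref{Hamiltonian-collision-limit} separately; adding them gives \eqref{Boltzmann-to-LLNS-Hamiltonian-limit}. Write \(S:=\nabla_{\rm sym}\varphi\), which is trace-free, and \(\psi(c):=\mathscr D_{\rm B}\bigl(\sum_{i,j}S_{ij}\widehat{\mathsf A}_{ij}\bigr)(c)\). Since \(\mathscr D_{\rm B}(v\cdot\varphi)=0\), we have \(\mathscr D_{\rm B}q_{\delta,\varphi}=\psi\), which does not depend on \(\delta\).

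Throughout I will use one uniform bound. For any fixed \(\beta\in(0,1)\) and all sufficiently small \(\delta\), \(F_\delta[u,\theta]\le C M_\beta\) uniformly in \(x\). I expect to read this off the explicit formula for \(F_\delta/M\), since the temperature \(1+\delta\theta\) stays close to \(1\). I also need a Taylor remainder \(F_\delta=M(1+\delta g_0[u,\theta])+\delta^2 R_\delta\) with \(|R_\delta|\le C(1+|v|^4)M_\beta\). With these two bounds, every limit below follows by dominated convergence, using the moment bounds \eqref{stress-corrector-transport-moment}--\eqref{stress-corrector-exponential-moment} of Lemma~\ref{lem:linearized-Boltzmann-kernel}(v).

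\emph{Transport part.} Integrate by parts in \(x\) and use the explicit formula \eqref{corrected-Boltzmann-Hamiltonian-explicit}. The momentum contribution becomes \(\delta^{-2}\int F_\delta\, v_iv_j\partial_j\varphi_i\). The second moment is exact:
\[
\int v_iv_jF_\delta\,\d v=(1-\delta^2\theta^2)\delta_{ij}+\delta^2u_iu_j .
\]
The isotropic part then integrates against \(\nabla\cdot\varphi=0\), so this contribution equals \(\int u_iu_j\partial_j\varphi_i\) exactly, with no error. The corrector contribution becomes \(\delta^{-1}\int F_\delta\, v_k\widehat{\mathsf A}_{ij}\,\partial_kS_{ij}\), and I handle it in three steps.
\begin{itemize}
\item The zeroth-order term \(M\) vanishes, because \(v_k\widehat{\mathsf A}_{ij}\) is odd.
\item In the first-order term \(\delta Mg_0\), the \(\theta\)-part is again odd. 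The \(u\)-part gives \(\int u_l\langle v_lv_k,\widehat{\mathsf A}_{ij}\rangle_M\partial_kS_{ij}\). Since \(\widehat{\mathsf A}\perp|v|^2\), this equals \(\int u_l\langle\mathsf A_{lk},\widehat{\mathsf A}_{ij}\rangle_M\partial_kS_{ij}=\mu\int u_l\,\mathsf C_{lk,ij}\,\partial_kS_{ij}\) by \eqref{stress-isotropy-identity}.
\item Because \(S\) is symmetric and trace-free, \(\mathsf C_{lk,ij}\,\partial_kS_{ij}=2\,\partial_kS_{lk}\). One more integration by parts turns \(2\mu\int u_l\,\partial_kS_{lk}\) into \(-2\mu\int\nabla_{\rm sym}u:S\).
\end{itemize}
The \(\delta^2R_\delta\) remainder contributes \(O(\delta)\), using \eqref{stress-corrector-transport-moment}.

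\emph{Collision part.} Expand
\[
e^{\varepsilon\psi}-1=\varepsilon\psi+\tfrac12\varepsilon^2\psi^2+r_\varepsilon,
\qquad |r_\varepsilon|\le\tfrac16\varepsilon^3|\psi|^3e^{\varepsilon|\psi|}.
\]
\begin{itemize}
\item The linear term equals \(\delta^{-2}\langle Q_{\rm B}(F_\delta,F_\delta),\sum S_{ij}\widehat{\mathsf A}_{ij}\rangle\) by \eqref{Boltzmann-weak-normalization}. It vanishes identically, because \(F_\delta\) is a local Maxwellian and hence \(Q_{\rm B}(F_\delta,F_\delta)=0\).
\item The quadratic term is \(\frac{\varepsilon}{4\delta^2}\int BF_\delta F_{\delta,*}\psi^2\). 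By domination it converges to \(\frac{\mathfrak a^2}{4}\int BMM_*\psi^2\). By \eqref{linearized-Boltzmann-operator} this equals \(\mathfrak a^2\int_{\T^3}\langle \sum S\widehat{\mathsf A},L\sum S\widehat{\mathsf A}\rangle_M\). Since \(L\widehat{\mathsf A}_{kl}=\mathsf A_{kl}\), it reduces to \(\mathfrak a^2\int S_{ij}S_{kl}\langle\widehat{\mathsf A}_{ij},\mathsf A_{kl}\rangle_M=\mathfrak a^2\mu\int S\mathsf C S\).
\item The cubic remainder is bounded by \(C\frac{\varepsilon^2}{\delta^2}\int BM_\beta M_{\beta,*}|\psi|^3e^{\varepsilon|\psi|}\). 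For \(\varepsilon\) small this is \(O(\varepsilon)\), using \(|\psi|\le C(1+|v|^2+|v_*|^2)\) and \eqref{stress-corrector-exponential-moment}.
\end{itemize}

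The main technical point is the uniform Gaussian domination \(F_\delta\le CM_\beta\) together with a quantitative remainder bound in \eqref{hydrodynamic-profile}, since \eqref{hydrodynamic-profile} is stated only locally uniformly. I would prove both directly from the explicit ratio \(F_\delta/M\). The exponent equals \(\frac{\delta\theta|v|^2+2\delta u\cdot v-\delta^2|u|^2}{2(1+\delta\theta)}\), and for small \(\delta\) it is at most \(\frac{1-\beta}{2}|v|^2+C\). A second-order Taylor bound with this exponential envelope then gives the remainder estimate. Everything else is exact algebra or follows from Lemma~\ref{lem:linearized-Boltzmann-kernel}.
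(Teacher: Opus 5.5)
Your proposal is correct and is essentially the paper's proof: the exact second moment handles the momentum test, the isotropy identity \(\langle\widehat{\mathsf A}_{ij},\mathsf A_{kl}\rangle_M=\mu\mathsf C_{ij,kl}\) handles the stress corrector, local equilibrium kills the linear collision term, and Gaussian domination plus the Dirichlet form handle the quadratic collision term; you only reorganize the dominated-convergence steps, integrating the corrector term by parts in \(x\) with an explicit Taylor remainder for \(F_\delta\) and splitting off an explicit cubic remainder, where the paper uses pointwise convergence of \(\delta^{-1}v\cdot\nabla_xF_\delta\) and the single bound \(0\le(e^{\varepsilon X}-1-\varepsilon X)/\varepsilon^2\le\frac12|X|^2e^{\varepsilon|X|}\). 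One small slip: since the density is \(1-\delta\theta\), the second moment is \((1-\delta^2\theta^2)\delta_{ij}+\delta^2(1-\delta\theta)u_iu_j\), so the momentum contribution is \(\int_{\T^3}(1-\delta\theta)u_iu_j\partial_j\varphi_i\,\d x\) rather than exactly \(\int_{\T^3}u_iu_j\partial_j\varphi_i\,\d x\), which does not affect the limit.
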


\begin{proof}
Fix any \(\beta\in(0,1)\), and choose \(\eta>0\) as in
Lemma~\ref{lem:linearized-Boltzmann-kernel} for the given test field
\(\varphi\).  We identify the transport and collision parts separately.
For the leading momentum test, integration by parts in \(x\) gives
\begin{align}\label{momentum-transport-computation}
 -\frac1{\delta^2}\int_{\T^3\times\R^3}
 (v\cdot\nabla_xF_\delta)(v\cdot\varphi)\,\d v\,\d x
 =\frac1{\delta^2}\int_{\T^3}
 \sum_{i,j=1}^3
 \left(\int_{\R^3}F_\delta v_i v_j\,\d v\right)
 \partial_j\varphi_i\,\d x.
\end{align}
For the Maxwellian
\begin{align*}
 \mathcal M_{\varrho,U,T}(v)
 :=\frac{\varrho}{(2\pi T)^{3/2}}
 \exp\left(-\frac{|v-U|^2}{2T}\right),
\end{align*}
by the change of variables \(w=v-U\), so that \(v=w+U\) and
\(\d v=\d w\), and we have 
\begin{align*}
 \int_{\R^3}\mathcal M_{\varrho,U,T}(v)v\otimes v\,\d v&={}
 \frac{\varrho}{(2\pi T)^{3/2}}
 \int_{\R^3}e^{-|w|^2/(2T)}
 (w+U)\otimes(w+U)\,\d w\\
 &={}
 \frac{\varrho}{(2\pi T)^{3/2}}
 \int_{\R^3}e^{-|w|^2/(2T)}w\otimes w\,\d w
 +\varrho U\otimes U\\
 &={}\varrho T I+\varrho U\otimes U.
\end{align*}
Here the terms containing only one factor of \(w\) vanish by oddness, and
the centered Gaussian identities
\begin{align*}
 \frac{1}{(2\pi T)^{3/2}}
 \int_{\R^3}e^{-|w|^2/(2T)}\,\d w=1,
 \qquad
 \frac{1}{(2\pi T)^{3/2}}
 \int_{\R^3}e^{-|w|^2/(2T)}w_iw_j\,\d w
 =T\delta_{ij}
\end{align*}
were used in the last equality.  For \eqref{local-Maxwellian-family},
\begin{align*}
 \varrho=1-\delta\theta,
 \qquad U=\delta u,
 \qquad T=1+\delta\theta.
\end{align*}
We therefore obtain
\begin{align*}
 \int_{\R^3}F_\delta v\otimes v\,\d v
 &=(1-\delta\theta)(1+\delta\theta)I
 +(1-\delta\theta)(\delta u)\otimes(\delta u)\\
 &=(1-\delta^2\theta^2)I
 +\delta^2(1-\delta\theta)u\otimes u.
\end{align*}
Since \(\varphi\) is divergence-free, the isotropic part of the second
moment gives
\begin{align*}
 \frac1{\delta^2}\int_{\T^3}
 (1-\delta^2\theta^2)
 \sum_{i=1}^3\partial_i\varphi_i\,\d x
 =\frac1{\delta^2}\int_{\T^3}
 (1-\delta^2\theta^2)\nabla_x\cdot\varphi\,\d x=0.
\end{align*}
The remaining part of the first transport term is
\begin{align*}
 \int_{\T^3}(1-\delta\theta)
 \sum_{i,j=1}^3u_i u_j\partial_j\varphi_i\,\d x,
\end{align*}
and it converges to
\(\int_{\T^3}\sum_{i,j=1}^3
u_i u_j\partial_j\varphi_i\,\d x\).

For the corrector part, direct differentiation of the explicit Maxwellian
\eqref{local-Maxwellian-family} gives, pointwise in \((x,v)\),
\begin{align*}
 \delta^{-1}v\cdot\nabla_xF_\delta
 \longrightarrow Mv\cdot\nabla_xg_0.
\end{align*}
More explicitly, logarithmic differentiation gives, for \(k=1,2,3\),
\begin{align*}
 \delta^{-1}\partial_{x_k}\log F_\delta
 =-\frac{\partial_{x_k}\theta}{1-\delta\theta}
 -\frac{3\partial_{x_k}\theta}{2(1+\delta\theta)}
 +\frac{(v-\delta u)\cdot\partial_{x_k}u}{1+\delta\theta}+\frac{|v-\delta u|^2\partial_{x_k}\theta}
 {2(1+\delta\theta)^2}.
\end{align*}
Choose \(\delta_0\in(0,1)\) sufficiently small that \(1\pm\delta\theta\) stay
uniformly away from zero for \(0<\delta<\delta_0\).  Multiplying the last
identity by \(F_\delta v_k\), summing over \(k\), and using the boundedness
of \(u,\theta\) and their first derivatives yield
\begin{align*}
 \left|\delta^{-1}v\cdot\nabla_xF_\delta(x,v)\right|
 \leq C F_\delta(x,v)|v|(1+|v-\delta u(x)|^2).
\end{align*}
Every Maxwellian in this family with \(0<\delta<\delta_0\) is bounded by a
constant multiple of the broader Maxwellian \(M_\beta\).  Together with
the boundedness of \(u\), the preceding estimate gives
\begin{align}\label{local-Maxwellian-transport-domination}
 \left|\delta^{-1}v\cdot\nabla_xF_\delta(x,v)\right|
 \leq C M_\beta(v)(1+|v|^3),
 \qquad 0<\delta<\delta_0.
\end{align}
We can now use \eqref{stress-corrector-transport-moment} and dominated
convergence to obtain
\begin{align*}
 -\frac1\delta\left\langle v\cdot\nabla_xF_\delta,
 \sum_{i,j=1}^3(\nabla_{\rm sym}\varphi)_{ij}
 \widehat{\mathsf A}_{ij}\right\rangle
 \longrightarrow
 -\int_{\T^3}
 \left\langle
 \sum_{i,j=1}^3(\nabla_{\rm sym}\varphi)_{ij}
 \widehat{\mathsf A}_{ij},v\cdot\nabla_xg_0
 \right\rangle_M\,\d x.
\end{align*}
Recall from \eqref{Boltzmann-stress-mode} that
\(\mathsf A_{ij}(v)=v_iv_j-|v|^2\delta_{ij}/3\) and
\(\mathsf K(v)=v(|v|^2-5)/2\).  Direct differentiation of \(g_0\) gives
\begin{align*}
 v\cdot\nabla_xg_0
 ={}&\sum_{i,j=1}^3v_iv_j\partial_i u_j
 +\mathsf K(v)\cdot\nabla_x\theta.
\end{align*}
For the first term, the definition of \(\mathsf A\) and
\(\nabla_x\cdot u=0\) yield
\begin{align*}
 \sum_{i,j=1}^3v_iv_j\partial_i u_j=\sum_{i,j=1}^3\mathsf A_{ij}(v)\partial_i u_j
 +\frac{|v|^2}{3}\sum_{i=1}^3\partial_i u_i=\frac12\sum_{i,j=1}^3\mathsf A_{ij}(v)
 \bigl(\partial_i u_j+\partial_j u_i\bigr)=\sum_{i,j=1}^3
 \mathsf A_{ij}(v)(\nabla_{\rm sym}u)_{ij}.
\end{align*}
Thus
\begin{align*}
 v\cdot\nabla_xg_0
 =\sum_{i,j=1}^3
 \mathsf A_{ij}(v)(\nabla_{\rm sym}u)_{ij}
 +\mathsf K(v)\cdot\nabla_x\theta.
\end{align*}
Since \(\widehat{\mathsf A}\) is even in \(v\) and \(\mathsf K\) is odd,
\begin{align*}
 \left\langle
 \sum_{i,j=1}^3(\nabla_{\rm sym}\varphi)_{ij}
 \widehat{\mathsf A}_{ij},
 \mathsf K\cdot\nabla_x\theta
 \right\rangle_M=0.
\end{align*}
For the stress part, \eqref{stress-isotropy-identity} gives
\begin{align*}
 \left\langle
 \sum_{i,j=1}^3(\nabla_{\rm sym}\varphi)_{ij}
 \widehat{\mathsf A}_{ij},
 \sum_{k,l=1}^3\mathsf A_{kl}
 (\nabla_{\rm sym}u)_{kl}
 \right\rangle_M
 &=
 \mu\sum_{i,j,k,l=1}^3
 (\nabla_{\rm sym}\varphi)_{ij}\mathsf C_{ij,kl}
 (\nabla_{\rm sym}u)_{kl}\\
 &=\mu\sum_{i,j,k,l=1}^3
 (\nabla_{\rm sym}\varphi)_{ij}
 \left(\delta_{ik}\delta_{jl}+\delta_{il}\delta_{jk}
 -\frac23\delta_{ij}\delta_{kl}\right)
 (\nabla_{\rm sym}u)_{kl}\\
 &=\mu\sum_{i,j=1}^3
 (\nabla_{\rm sym}\varphi)_{ij}(\nabla_{\rm sym}u)_{ij}
 +\mu\sum_{i,j=1}^3
 (\nabla_{\rm sym}\varphi)_{ij}(\nabla_{\rm sym}u)_{ji}\\
 &\quad-\frac{2\mu}{3}
 \left(\sum_{i=1}^3(\nabla_{\rm sym}\varphi)_{ii}\right)
 \left(\sum_{k=1}^3(\nabla_{\rm sym}u)_{kk}\right)\\
 &=2\mu\sum_{i,j=1}^3
 (\nabla_{\rm sym}\varphi)_{ij}
 (\nabla_{\rm sym}u)_{ij}.
\end{align*}
We have proved that
\begin{align*}
 \int_{\T^3}\left\langle
 \sum_{i,j=1}^3(\nabla_{\rm sym}\varphi)_{ij}
 \widehat{\mathsf A}_{ij},v\cdot\nabla_xg_0
 \right\rangle_M\,\d x
 =2\mu\int_{\T^3}\sum_{i,j=1}^3
 (\nabla_{\rm sym}\varphi)_{ij}
 (\nabla_{\rm sym}u)_{ij}\,\d x.
\end{align*}
This completes the proof of \eqref{Hamiltonian-transport-limit}.

In the following, we turn to the collision part.  The velocity function
\(v\cdot\varphi(x)\) is a collision invariant, and hence
\begin{align*}
 \mathscr D_{\rm B}q_{\delta,\varphi}
 =\mathscr D_{\rm B}\left(
 \sum_{i,j=1}^3(\nabla_{\rm sym}\varphi)_{ij}
 \widehat{\mathsf A}_{ij}\right).
\end{align*}
Moreover, \(F_\delta(x,\cdot)\) is a Maxwellian at every \(x\), so
the conservation laws
\begin{align*}
 v+v_*=v'+v_*',
 \qquad
 |v|^2+|v_*|^2=|v'|^2+|v_*'|^2
\end{align*}
imply
\begin{align*}
 |v-\delta u(x)|^2+|v_*-\delta u(x)|^2=|v'-\delta u(x)|^2
 +|v_*'-\delta u(x)|^2.
\end{align*}
The explicit formula \eqref{local-Maxwellian-family} then yields the
detailed-balance identity
\begin{align}\label{local-Maxwellian-detailed-balance}
 F_\delta(x,v)F_\delta(x,v_*)
 =F_\delta(x,v')F_\delta(x,v_*').
\end{align}
The pre--post-collisional change of variables preserves the collision
measure.  Under this change, the collision kernel becomes
\begin{align*}
 B\left(v'-v_*',\frac{v-v_*}{|v-v_*|}\right)
 =b\left(
 \frac{v'-v_*'}{|v'-v_*'|}\cdot
 \frac{v-v_*}{|v-v_*|}\right)=b\left(\sigma\cdot\frac{v-v_*}{|v-v_*|}\right)
 =B(v-v_*,\sigma),
\end{align*}
where we used \(v'-v_*'=|v-v_*|\sigma\) and
\eqref{linearized-Boltzmann-kernel-conditions}.  Under this change of
variables, \eqref{local-Maxwellian-detailed-balance} gives
\begin{align*}
 &\int_{\R^3\times\R^3\times\mathbb S^2}
 B(v-v_*,\sigma)F_\delta F_{\delta,*}
 \sum_{i,j=1}^3(\nabla_{\rm sym}\varphi)_{ij}(x)
 \left(\widehat{\mathsf A}_{ij}(v')
 +\widehat{\mathsf A}_{ij}(v_*')\right)
 \,\d v\,\d v_*\,\d\sigma\\
 &\qquad={}
 \int_{\R^3\times\R^3\times\mathbb S^2}
 B(v-v_*,\sigma)F_\delta'F_{\delta,*}'
 \sum_{i,j=1}^3(\nabla_{\rm sym}\varphi)_{ij}(x)
 \left(\widehat{\mathsf A}_{ij}(v)
 +\widehat{\mathsf A}_{ij}(v_*)\right)
 \,\d v\,\d v_*\,\d\sigma\\
 &\qquad={}
 \int_{\R^3\times\R^3\times\mathbb S^2}
 B(v-v_*,\sigma)F_\delta F_{\delta,*}
 \sum_{i,j=1}^3(\nabla_{\rm sym}\varphi)_{ij}(x)
 \left(\widehat{\mathsf A}_{ij}(v)
 +\widehat{\mathsf A}_{ij}(v_*)\right)
 \,\d v\,\d v_*\,\d\sigma.
\end{align*}
After integration in \(x\), the gain and loss terms cancel, and hence
\begin{align*}
 \int_{\mathscr C_{\rm B}}BF_\delta F_{\delta,*}
 \mathscr D_{\rm B}\left(
 \sum_{i,j=1}^3(\nabla_{\rm sym}\varphi)_{ij}
 \widehat{\mathsf A}_{ij}\right)\,\d c=0.
\end{align*}
We may therefore center the collision term without changing its value:
\begin{align}\label{centered-collision-Hamiltonian}
 &\frac1{2\varepsilon\delta^2}
 \int_{\mathscr C_{\rm B}} BF_\delta F_{\delta,*}
 \left[\exp\left\{\varepsilon\mathscr D_{\rm B}\left(
 \sum_{i,j=1}^3(\nabla_{\rm sym}\varphi)_{ij}
 \widehat{\mathsf A}_{ij}\right)\right\}-1\right]\d c\notag\\
 &={}
 \frac1{2\varepsilon\delta^2}
 \int_{\mathscr C_{\rm B}}BF_\delta F_{\delta,*}
 \left[\exp\left\{\varepsilon\mathscr D_{\rm B}\left(
 \sum_{i,j=1}^3(\nabla_{\rm sym}\varphi)_{ij}
 \widehat{\mathsf A}_{ij}\right)\right\}
 -1-\varepsilon\mathscr D_{\rm B}\left(
 \sum_{i,j=1}^3(\nabla_{\rm sym}\varphi)_{ij}
 \widehat{\mathsf A}_{ij}\right)\right]\d c.
\end{align}
For every real \(X\),
\begin{align}\label{exponential-Taylor-domination}
 0\leq\frac{e^{\varepsilon X}-1-\varepsilon X}{\varepsilon^2}
 \leq\frac12|X|^2e^{\varepsilon|X|}.
\end{align}
Moreover, boundedness of \(u\) and \(\theta\) and the explicit formula
\eqref{local-Maxwellian-family} imply, after decreasing \(\delta_0\) if
necessary,
\begin{align}\label{local-Maxwellian-collision-domination}
 F_\delta(x,v)F_\delta(x,v_*)
 \leq C M_\beta(v)M_\beta(v_*),
 \qquad 0<\delta<\delta_0.
\end{align}
Along a sequence satisfying \eqref{Hamiltonian-distinguished-scaling}, the
ratio \(\varepsilon/\delta^2\) is bounded and eventually
\(\varepsilon\leq\eta\).  Hence
\eqref{stress-corrector-exponential-moment} and
\eqref{exponential-Taylor-domination}--
\eqref{local-Maxwellian-collision-domination} provide an integrable
dominating function.  Since \(F_\delta\to M\) pointwise, dominated
convergence gives
\begin{align}\label{quadratic-collision-Hamiltonian-limit}
 \eqref{centered-collision-Hamiltonian}
 \longrightarrow
 \frac{\mathfrak a^2}{4}
 \int_{\mathscr C_{\rm B}}BMM_*
 \left|\mathscr D_{\rm B}\left(
 \sum_{i,j=1}^3(\nabla_{\rm sym}\varphi)_{ij}
 \widehat{\mathsf A}_{ij}\right)\right|^2\,\d c.
\end{align}
By the Dirichlet identity \eqref{linearized-Boltzmann-operator},
\begin{align}\label{Boltzmann-fluctuation-dissipation-identity}
 \frac12\int_{\mathscr C_{\rm B}}BMM_*
 \mathscr D_{\rm B}p\,\mathscr D_{\rm B}q\,\d c
 =2((p,Lq))_M.
\end{align}
The cell problem gives
\begin{align*}
 L\left(\sum_{i,j=1}^3(\nabla_{\rm sym}\varphi)_{ij}
 \widehat{\mathsf A}_{ij}\right)
 =\sum_{i,j=1}^3(\nabla_{\rm sym}\varphi)_{ij}\mathsf A_{ij},
\end{align*}
We use \eqref{Boltzmann-fluctuation-dissipation-identity} and
\eqref{stress-isotropy-identity} to rewrite the right-hand side of
\eqref{quadratic-collision-Hamiltonian-limit} as
\begin{align*}
 \mathfrak a^2\left(\left(
 \sum_{i,j=1}^3(\nabla_{\rm sym}\varphi)_{ij}
 \widehat{\mathsf A}_{ij},
 L\left(\sum_{k,l=1}^3(\nabla_{\rm sym}\varphi)_{kl}
 \widehat{\mathsf A}_{kl}\right)
 \right)\right)_M
 =\mathfrak a^2\mu\int_{\T^3}
 \sum_{i,j,k,l=1}^3
 (\nabla_{\rm sym}\varphi)_{ij}\mathsf C_{ij,kl}
 (\nabla_{\rm sym}\varphi)_{kl}\,\d x.
\end{align*}
This proves \eqref{Hamiltonian-collision-limit}.  Adding the transport and
collision limits gives \eqref{Boltzmann-to-LLNS-Hamiltonian-limit}.
\end{proof}

\begin{remark}[Why the limit contains no temperature field]
\label{rem:LLNS-without-temperature}
Theorem~\ref{thm:Boltzmann-LLNS-Hamiltonian} identifies the Hamiltonian
governing the divergence-free velocity field.  The local Maxwellians
\eqref{local-Maxwellian-family} have temperature
\(T_\delta=1+\delta\theta\) and density
\(\varrho_\delta=1-\delta\theta\), and hence approach the global
equilibrium \(M=\mathcal M_{1,0,1}\).  The random-stress covariance in
fluctuating hydrodynamics generally depends on the local absolute
temperature.  Under the present scaling, this dependence is evaluated at
the reference temperature \(T_0=1\).  The quadratic part of the
collision Hamiltonian is proportional to
\begin{align*}
 \frac{\varepsilon}{\delta^2}
 \int_{\mathscr C_{\rm B}}BF_\delta F_{\delta,*}
 \left|\mathscr D_{\rm B}\left(
 \sum_{i,j=1}^3(\nabla_{\rm sym}\varphi)_{ij}
 \widehat{\mathsf A}_{ij}\right)\right|^2\,\d c.
\end{align*}
By \eqref{hydrodynamic-profile},
\begin{align*}
 F_\delta(x,v)
 &=M(v)\bigl(1+\delta g_0(x,v)+O(\delta^2)\bigr),\qquad
 F_\delta(x,v_*)
 =M(v_*)\bigl(1+\delta g_0(x,v_*)+O(\delta^2)\bigr).
\end{align*}
Multiplying the two expansions gives
\begin{align*}
 F_\delta F_{\delta,*}
 =MM_*\bigl(1+\delta g_0+\delta g_{0,*}+O(\delta^2)\bigr)
 =MM_*+O(\delta),
\end{align*}
locally uniformly in \((x,v,v_*)\).  Since
\(\varepsilon/\delta^2\to\mathfrak a^2\), the contribution of
\(\theta\) vanishes in the limiting covariance.  The covariance is thus
evaluated at the equilibrium Maxwellian \(M\), and the limit contains the
additive LLNS random stress with equilibrium viscosity \(\mu\).  Moreover,
the theorem tests only the divergence-free velocity field, so the pressure
and temperature modes do not appear.

To obtain a fluctuating Navier--Stokes--Fourier limit, one would also have
to test the energy variable.  This would produce the Fourier temperature
equation together with a random heat flux.  Such an extension is beyond the
momentum Hamiltonian studied here.
\end{remark}

\begin{remark}[Why the Poisson noise becomes Gaussian]
\label{rem:cumulant-scaling}
The \(k\)-th collision cumulant in
\eqref{rescaled-Boltzmann-Hamiltonian} carries the factor
\begin{align}\label{hydrodynamic-cumulant-scaling}
 \frac{\varepsilon^k}{\varepsilon\delta^2}
 =\frac{\varepsilon^{k-1}}{\delta^2}.
\end{align}
Under \eqref{Hamiltonian-distinguished-scaling}, the second cumulant
converges to a nonzero quadratic form, while every cumulant of order
\(k\geq3\) vanishes.  This is why the limiting Hamiltonian produces the Gaussian limit.
\end{remark}

\section{Scope of the program}

As mentioned in the introduction, the FKT program introduced above consists of four steps.  In this paper, we
carry out Step~1 by proposing the master fluctuating kinetic equation and
developing its mathematical setting.  Steps~2 and~3 concern the solution
theory and the asymptotic behavior of the stochastic equation associated
with each choice of \(\Psi_f^*\), \(\mathscr D\), and \(\mathcal E\).
Recent work on the fluctuating VFP
\cite{MullerVonRenesseZimmerVFP,HaoWuZimmerVFPDK}, Landau
\cite{DuongHeWuLandau}, and Boltzmann equations
\cite{WuPoissonBoltzmann} gives first results in this direction.  The
solution theories obtained in Steps~2 and~3 are also needed for the
process-level limits in Step~4.  We summarize the available results and the
open problems below.

\subsection{The fluctuating Vlasov--Fokker--Planck equation}
The fluctuating Vlasov--Fokker--Planck equation takes the form
\begin{align*}
 \partial_tf={}&(\Delta_v-v\cdot\nabla_x)f+\nabla_v\cdot(vf)
 -\nabla_v\cdot(fV\star_x\langle f\rangle)
 -\varepsilon^{1/2}\nabla_v\cdot(\sqrt{f}\xi),
\end{align*}
where \(\langle f\rangle\) denotes the spatial marginal of \(f\), and
\(\xi\) is space--time--velocity white noise.  M\"uller, von Renesse, and
Zimmer \cite{MullerVonRenesseZimmerVFP} formulated this equation by a
martingale problem.  The empirical measure of the underlying mean-field
particle system solves this martingale problem, and its law is unique.  This
gives the first mathematical justification of the fluctuating VFP model.

For function-valued solutions, the noise is regularized and the equation
takes the form
\begin{align*}
 \partial_tf={}&(\Delta_v-v\cdot\nabla_x)f+\nabla_v\cdot(vf)
 -\nabla_v\cdot(fV\star_x\langle f\rangle)
 -\varepsilon^{1/2}\nabla_v\cdot(\sigma(f)\circ\xi_F),
\end{align*}
where \(\xi_F\) is white in time and correlated in space and velocity,
and \(\sigma\) is a regularized approximation of the square root.  Hao,
Wu, and Zimmer \cite{HaoWuZimmerVFPDK} proved existence and uniqueness for
sufficiently regular \(\sigma\), under suitable integrability assumptions
on the interaction kernel \(V\).  For the square-root coefficient and
\(V=0\), probabilistically weak renormalized kinetic solutions were
constructed.  Their uniqueness remains open.

Unlike the classical Dean--Kawasaki equation \cite{FehrmanGess2024}, the
diffusion acts only in velocity.  The compactness argument requires to use
the kinetic semigroup to transfer regularity from velocity to position.  On
the full phase space, the tails and the entropy also require moment bounds.
These bounds are available when \(\sigma\) is regular.  For the square-root
coefficient, the difficulty already becomes apparent in the
Stratonovich-to-It\^o correction: its logarithmic singularity breaks the
moment argument and obstructs the nonlocal interaction limit.  The
square-root result in
\cite{HaoWuZimmerVFPDK} consequently assumes \(V=0\) and replaces entropy
estimates by \(L^2\) bounds.  Without \(L^1\)-conservation, the kinetic
measure near zero is difficult to control, which is also a major obstacle
to uniqueness.  A related whole-space difficulty for the classical
Dean--Kawasaki equation is treated in \cite{FehrmanGessWholeSpace} through
relative entropy with respect to a positive constant state.

Thus, uniqueness for integrable square-root solutions, an entropy theory on
the full phase space, and nonlocal or singular interactions remain open.

Regarding to Step~3, the solution theory in
\cite{HaoWuZimmerVFPDK} suggests to study large deviations and central-limit
fluctuations in the joint regime
\begin{align*}
 \bigl(\varepsilon,\delta(\varepsilon),n(\varepsilon)\bigr)
 \longrightarrow(0,0,\infty),
 \qquad
 \xi_F=\xi_{\delta(\varepsilon)},
 \qquad
 \sigma_{n(\varepsilon)}\longrightarrow\sqrt{\,\cdot\,}.
\end{align*}
Here \(\xi_\delta\) is white in time and mollified in both position and
velocity, with correlation length \(\delta\), while \(\sigma_n\) is a
regular approximation of the square root.  Motivated from this solution
theory, one may combine the joint small-noise and singular-limit methods of
\cite{FehrmanGessNonEquilibrium,DirrFehrmanGess2024,GessWuZhang2026}
with the kinetic semigroup estimates developed in
\cite{HaoWuZimmerVFPDK}.  The present theory still requires the
regularization \(\sigma_n\).  A uniqueness theory for the square-root
equation would allow us to remove this auxiliary scale.

\subsection{The fluctuating Landau equation}

Duong, He, and Wu \cite{DuongHeWuLandau} introduced the homogeneous
fluctuating Landau equation in the Stratonovich form.  We write
\(\widetilde\nabla:=\mathscr D_{\mathrm L}\) for the pairwise Landau
gradient in \eqref{Landau-channel-derivative} and
\(\widetilde\nabla\cdot\) for its divergence.  The regularized equation is
\begin{align*}
 \partial_tf={}&\frac12\widetilde\nabla\cdot
 \bigl(Aff_*\widetilde\nabla\log f\bigr)
 -\frac{\sqrt\varepsilon}{2}\widetilde\nabla\cdot
 \left(A^{1/2}\sigma(f)\sigma(f_*)\circ\xi_K\right).
\end{align*}
Here \(f=f(t,v)\), \(f_*=f(t,v_*)\), the symbol \(\circ\) denotes
Stratonovich integration, and \(\sigma\) is a smooth approximation of the
square root.  The noise is finite dimensional and correlated in velocity:
\begin{align*}
 \xi_K(t,v,v_*)
 :=\sum_{k=1}^K\frac{\d}{\d t}B^k(t)g_k(v,v_*),
 \qquad
 g_k(v,v_*)=-g_k(v_*,v),
\end{align*}
where the \(B^k\) are independent Brownian motions.  The antisymmetry of
\(g_k\) preserves the pairwise conservative structure of Landau collisions.
For
moderately soft kernels \(A(v-v_*)=|v-v_*|^{2+\gamma}\),
\(\gamma\in(-2,0)\), probabilistically weak solutions are constructed in
\cite{DuongHeWuLandau}, together with mass conservation, an energy
inequality, and entropy dissipation.

For the square-root coefficient, the obstruction comes from the nonlocal
interaction in the noise.  The Stratonovich-to-It\^o correction contains
velocity integrals with vacuum singularities in both the visible and
integrated variables.  The usual renormalized kinetic formulation controls
only the former.  Consequently, no closed solution concept is currently
available for the square-root equation.  Uniqueness, other collision
kernels, the inhomogeneous equation, and the analysis of fluctuations and
large deviations remain open.

\subsection{The fluctuating Boltzmann equation}

The fluctuating Boltzmann equation introduced in
\cite{WuPoissonBoltzmann} is driven by Poisson noise and takes the form
\begin{align*}
 \d F_t^\varepsilon+v\cdot\nabla_xF_t^\varepsilon\,\d t
 ={}&Q_{\mathrm B}(F_t^\varepsilon,F_t^\varepsilon)\,\d t\\
 &+\varepsilon\int_{\mathscr C_{\mathrm B}\times[0,\infty)}
 \mathcal J_c
 \boldsymbol1_{\{u\leq(2\varepsilon)^{-1}
 B(v-v_*,\sigma)F_{t-}^\varepsilon(x,v)
 F_{t-}^\varepsilon(x,v_*)\}}
 \widetilde{\mathcal N}(\d t,\d c,\d u).
\end{align*}
The Poisson measure records individual collisions with rate proportional to
\(B(v-v_*,\sigma)F^\varepsilon F_*^\varepsilon\), and \(\mathcal J_c\)
transfers mass from the incoming to the outgoing velocities.  Every
increment conserves mass, momentum, and energy, but may take a continuum
density outside the nonnegative cone.  As observed in
\cite{WuPoissonBoltzmann}, the equation therefore has no direct nonnegative solution theory.

To avoid this problem, \cite{WuPoissonBoltzmann} introduces a coarse-grained
model whose cellwise jumps preserve nonnegativity.  For this model,
well-posedness, a large deviation principle, and a restricted
\(\Gamma\)-convergence result on bounded velocity regions are proved.  This
is a contribution to Steps~2 and~3.  It remains open to construct a solution
theory for the continuum SPDE which preserves both nonnegativity and the
collision invariants.

\subsection{Approximate models for the fluctuating Boltzmann equation}

We next consider two simpler models.  They retain different parts of the
fluctuating Boltzmann collision structure.

\medskip
\noindent\textbf{The Gaussian fluctuating Boltzmann equation.}
Replace the compensated Poisson random measure in the fluctuating
Boltzmann equation by a Gaussian white noise
\(\xi(\d t,\d c,\d u)\) on
\([0,\infty)\times\mathscr C_{\mathrm B}\times[0,\infty)\).  This gives
the formal equation
\begin{align}\label{Gaussian-fluctuating-Boltzmann-marked}
 \d F_t^\varepsilon+v\cdot\nabla_xF_t^\varepsilon\,\d t
 ={}&Q_{\mathrm B}(F_t^\varepsilon,F_t^\varepsilon)\,\d t\notag\\
 &+\varepsilon\int_{\mathscr C_{\mathrm B}\times[0,\infty)}
 \mathcal J_c
 \boldsymbol1_{\{u\leq(2\varepsilon)^{-1}
 B(v-v_*,\sigma)F_t^\varepsilon(x,v)
 F_t^\varepsilon(x,v_*)\}}
 \xi(\d t,\d c,\d u).
\end{align}
We notice that the mark variable can be integrated out.  If
\(\bar\xi(\d t,\d c)\) is a Gaussian white noise on
\([0,\infty)\times\mathscr C_{\mathrm B}\), then the same noise can be
written, in law, as
\begin{align}\label{Gaussian-fluctuating-Boltzmann-unmarked}
 \d F_t^\varepsilon+v\cdot\nabla_xF_t^\varepsilon\,\d t
 ={}&Q_{\mathrm B}(F_t^\varepsilon,F_t^\varepsilon)\,\d t\notag\\
 &+\sqrt{\frac{\varepsilon}{2}}
 \int_{\mathscr C_{\mathrm B}}
 \mathcal J_c
 \sqrt{B(v-v_*,\sigma)F_t^\varepsilon(x,v)
 F_t^\varepsilon(x,v_*)}\,
 \bar\xi(\d t,\d c).
\end{align}
To verify the equivalence, we test the two noises against state-space test
functions \(p\) and \(q\).  Their instantaneous cross covariance is, in
both cases,
\begin{align*}
 \frac{\varepsilon}{2}
 \int_{\mathscr C_{\mathrm B}}
 B(v-v_*,\sigma)F_t^\varepsilon(x,v)F_t^\varepsilon(x,v_*)
 \mathscr D_{\mathrm B}p(c)\mathscr D_{\mathrm B}q(c)\,\d c\,\d t, 
\end{align*}
as the integration with respect to \(u\) produces the factor
\((2\varepsilon)^{-1}BF_t^\varepsilon F_{t,*}^\varepsilon\).  Both noises
are centered and Gaussian, and therefore the equality of the covariance
gives the equality of their cylindrical laws.  This second-order
approximation retains the drift and covariance, and hence the central-limit
fluctuations of the underlying hard-sphere system.  It discards the higher
cumulants and the corresponding nonquadratic large-deviation cost.  After
regularizing the coefficient and the noise, the methods of
\cite{DuongHeWuLandau} may provide a method toward the construction of
nonnegative function-valued solutions.

\medskip
\noindent\textbf{The kinetic \(\alpha\)-stable Dean--Kawasaki equation.}

For the second approximation, we replace the nonlinear Boltzmann collision
operator by the fractional velocity generator \(\mathcal L_\alpha\).  The
model derived in Section~\ref{sec:Psi-structure-examples} is
\begin{align*}
 \d F_t^\varepsilon
 +v\cdot\nabla_xF_t^\varepsilon\,\d t
 ={}&\mathcal L_\alpha^*F_t^\varepsilon\,\d t\\
 &+\varepsilon
 \int_{\R_x^d\times\R_v^d\times\R_*^d\times(0,\infty)}
 \bigl(\delta_{(x,v+z)}-\delta_{(x,v)}\bigr)
 \boldsymbol1_{\{u\leq F_{t-}^\varepsilon(x,v)/\varepsilon\}}
 \widetilde{\mathfrak N}(\d t,\d x,\d v,\d z,\d u).
\end{align*}
This equation has the same positivity obstruction as the fluctuating
Boltzmann equation: a jump from a diffuse density subtracts an atom at its
departure point.  Its linear collision structure is directly related to
the empirical measures of kinetic particles whose velocities are driven by
independent stable processes.
The martingale formulations and particle characterizations developed for
the classical Dean--Kawasaki equation
\cite{KonarovskyiLehmannVonRenesse2019,
KonarovskyiLehmannVonRenesse2020,MullerVonRenesseZimmerVFP}
suggest to consider measure-valued martingale solutions.  Their defining
characteristics should retain the full jump current, or equivalently its
higher cumulants, and the solutions may be constructed from empirical
measures of the kinetic \(\alpha\)-stable particles.  Nonnegativity is then
preserved at the particle level.

\subsection{Stochastic hydrodynamic limits}

The hydrodynamic limit studied in Section~\ref{sec:Boltzmann-LLNS}
reconstructs the velocity equation through the momentum mode and its stress
corrector, but does not recover the density and temperature equations.  To
derive the full fluctuating Navier--Stokes--Fourier system studied in
\cite{GessSauerbreyWuNSF}, one also needs kinetic observables for the mass
and energy modes and the heat-flux cell problem mentioned in
Remark~\ref{rem:LLNS-without-temperature}.  The Hamiltonian should recover
the density--temperature constraints, Fourier heat conduction, the random
heat flux, and its joint covariance with the random stress.

These limiting characteristics can be identified at the Hamiltonian level
before a complete SPDE theory is available.  Once the solution theories in
Step~2 are obtained, we can study process-level convergence by tightness and
the limiting martingale problem.  This is the long-term problem in Step~4.

\subsection{Relationships between different models}

Several scaling limits connect deterministic kinetic equations.  One
example is the grazing-collision limit from Boltzmann to Landau
\cite{DesvillettesGrazing1992,DuongHeGENERIC2026}.  Their fluctuating
counterparts require in addition to identify the limiting current.  In the
grazing regime, the Poisson jumps become more frequent and their sizes
decrease.  They should converge to a Gaussian current whose covariance is
the nonlinear, nonlocal Landau covariance.

We expect to study these limits through their Hamiltonians.  In
Section~\ref{sec:Boltzmann-LLNS}, the expansion
\eqref{exponential-Taylor-domination} leads to
\eqref{quadratic-collision-Hamiltonian-limit}.  The second cumulant survives
and all higher cumulants vanish, which produces the Gaussian LLNS noise; see
Remark~\ref{rem:cumulant-scaling}.  The fluctuating Boltzmann-to-Landau limit
should follow the same mechanism.  It has to recover a state-dependent
multiplicative covariance and, at the same time, control the angular
concentration and its relation to the fluctuation scale.  Once the solution
theory is available, the Hamiltonian limits may be strengthened to
process-level convergence.

\subsection{The nonlinear fluctuation--dissipation relation}

We proposed the nonlinear fluctuation--dissipation relation
\eqref{intro-centered-cumulant}.  In
Section~\ref{sec:rate-identification}, we test this relation by the
large-deviation rates.  On regular tilted paths,
Theorem~\ref{thm:compatible-tilt-rate-equality} identifies the
Freidlin--Wentzell-type action of the master equation with the canonical
action of the generalized gradient structure.  Hence the two descriptions
assign the same thermodynamic cost to every admissible path.

A stronger justification may be obtained from a microscopic derivation.  It
requires to consider systems satisfying a suitable detailed-balance
condition.  The question is whether, under kinetic scaling, the Hamiltonian
of the empirical current converges to the force-shifted structure determined
by the same dual potential as the limiting deterministic current.  Such a
derivation would determine whether \eqref{intro-centered-cumulant} is
selected by microscopic systems.

\subsection{Conclusion} 

Fluctuating kinetic theory lies between deterministic kinetic equations and
microscopic particle dynamics.  The kinetic state remains a continuum
field, while the empirical diffusion and collision currents retain their
fluctuations.  In this regime, one may describe non-Gaussian collision
statistics, state-dependent random currents, and their propagation to
hydrodynamic scales.  The mathematical problems combine kinetic transport,
singular or nonlocal coefficients, conservative Gaussian or jump noises,
positivity constraints, and interacting scales.  The master equation gives
a continuum formulation in which these problems can be studied.

\subsection*{Acknowledgements}
The author is grateful to Zihui He and Johannes Zimmer for helpful
discussions and comments.

\subsection*{Declaration of AI-Assisted Technologies}

The author acknowledges the use of OpenAI's ChatGPT 5.6 Pro for literature
collection and for improving the language and presentation of the
manuscript.  The principal ideas, mathematical arguments, and proof
strategies were developed and independently verified by the author.  Some
computations were carried out with AI assistance.  In particular,
AI-assisted exploration suggested the Hamiltonian viewpoint adopted in
Section~\ref{sec:Boltzmann-LLNS} and brought the related work of Feng,
Mikami, and Zimmer \cite{FengMikamiZimmer2021} to the author's attention. Figure~1 was also prepared using AI tools under the author's direction.

\printbibliography

\end{document}